\title{Graph properties of graph associahedra}
\thanks{VP was partially supported by grant MTM2011-22792 of the Spanish MICINN and by the French ANR grant EGOS (12 JS02 002 01).}
\author{Thibault Manneville}
\address{LIX, \'Ecole Polytechnique, Palaiseau}
\email{thibault.manneville@lix.polytechnique.fr}
\urladdr{http://www.lix.polytechnique.fr/~manneville/}
\author{Vincent Pilaud}
\address{CNRS \& LIX, \'Ecole Polytechnique, Palaiseau}
\email{vincent.pilaud@lix.polytechnique.fr}
\urladdr{http://www.lix.polytechnique.fr/~pilaud/}
\newtheorem{theorem}{Theorem}
\newtheorem{corollary}[theorem]{Corollary}
\newtheorem{proposition}[theorem]{Proposition}
\newtheorem{lemma}[theorem]{Lemma}
\theoremstyle{definition}
\newtheorem{example}[theorem]{Example}
\newtheorem{remark}[theorem]{Remark}
\newtheorem{question}[theorem]{Question}
\newcommand{\R}{\mathbb{R}} 
\newcommand{\fS}{\mathfrak{S}} 
\newcommand{\set}[2]{\left\{ #1 \;\middle|\; #2 \right\}} 
\newcommand{\ssm}{\smallsetminus} 
\newcommand{\symdif}{\, \triangle \, } 
\newcommand{\eqdef}{\mbox{\,\raisebox{0.2ex}{\scriptsize\ensuremath{\mathrm:}}\ensuremath{=}\,}} 
\newcommand{\Asso}{\mathsf{Asso}} 
\newcommand{\Perm}{\mathsf{Perm}} 
\newcommand{\Nest}{\mathsf{Nest}} 
\newcommand{\polytope}{P} 
\newcommand{\face}{F} 
\newcommand{\ground}{\mathrm{V}} 
\newcommandx{\graphG}[1][1=G]{\mathrm{#1}} 
\newcommand{\pathG}{\graphG[P]} 
\newcommand{\cycleG}{\graphG[O]} 
\newcommand{\completeG}{\graphG[K]} 
\newcommand{\starG}{\graphG[X]} 
\newcommand{\leave}{l} 
\newcommand{\connectedComponents}[1]{\pi_0(#1)} 
\newcommandx{\block}[1][1=b]{\mathsf{#1}} 
\newcommandx{\building}[1][1=B]{\mathsf{#1}} 
\newcommand{\nestelem}{\block[n]} 
\newcommand{\nested}{\building[N]} 
\newcommand{\tube}{\block[t]} 
\newcommand{\tubing}{\building[T]} 
\newcommand{\linearExtensionTubing}{\building[L]} 
\newcommandx{\snode}[1][1=s]{\mathsf{#1}} 
\newcommandx{\spine}[1][1=S]{\mathsf{#1}} 
\newcommand{\lab}{\lambda} 
\newcommand{\nestedComplex}{\mathcal{N}} 
\newcommand{\flipGraph}{\mathcal{F}} 
\newcommand{\flip}{\leftrightarrow} 
\newcommand{\diam}[1]{\delta(\flipGraph(#1))} 
\newcommand{\surjection}{\Omega} 
\newcommand{\normalization}{\surjection^\star} 
\newcommand{\bridge}{\mathsf{B}} 
\newcommandx{\shortFlip}[1][1=f]{\mathbf{#1}} 
\newcommand{\longFlip}{\mathbf{g}} 
\newcommand{\descendants}{\mathrm{desc}} 
\newcommand{\coarsens}{\prec} 
\newcommand{\HamiltonianCycle}{\mathcal{H}} 
\newcommand{\tail}[1]{\mathring{#1}} 
\DeclareMathOperator{\conv}{conv} 
\newcommand{\fref}[1]{Figure~\ref{#1}} 
\newcommand{\ie}{\textit{i.e.}~} 
\newcommand{\eg}{\textit{e.g.}~} 
\definecolor{darkblue}{rgb}{0,0,0.7} 
\newcommand{\darkblue}{\color{darkblue}} 
\newcommand{\defn}[1]{\emph{\darkblue #1}} 
\newcommand{\para}[1]{\medskip\paragraph{\bf #1}} 
\begin{document}

\begin{abstract}
A graph associahedron is a simple polytope whose face lattice encodes the nested structure of the connected subgraphs of a given graph. In this paper, we study certain graph properties of the $1$-skeleta of graph associahedra, such as their diameter and their Hamiltonicity. Our results extend known results for the classical associahedra (path associahedra) and permutahedra (complete graph associahedra). We also discuss partial extensions to the family of nestohedra.

\medskip
\noindent
{\sc keywords.}
Graph associahedron, nestohedron, graph diameter, Hamiltonian cycle.
\end{abstract}

\maketitle


\section{Introduction}
\label{sec:intro}

\defn{Associahedra} are classical polytopes whose combinatorial structure was first investigated by \mbox{J.~Stasheff~\cite{Stasheff}} and later geometrically realized by several methods~\cite{Lee, GelfandKapranovZelevinsky, Loday, HohlwegLange, PilaudSantos-brickPolytope, CeballosSantosZiegler}. They appear in different contexts in mathematics, in particular in algebraic combinatorics (in homotopy theory~\cite{Stasheff}, for construction of Hopf algebras~\cite{LodayRonco}, in cluster algebras~\cite{ChapotonFominZelevinsky, HohlwegLangeThomas}) and discrete geometry (as instances of secondary or fiber polytopes~\cite{GelfandKapranovZelevinsky, BilleraFillimanSturmfels} or brick polytopes~\cite{PilaudSantos-brickPolytope, PilaudStump-brickPolytope}). The combinatorial structure of the $n$-dimensional associahedron encodes the dissections of a convex $(n+3)$-gon: its vertices correspond to the triangulations of the $(n+3)$-gon, its edges correspond to flips between them, etc. See \fref{fig:exmAssociahedra}. Various combinatorial properties of these polytopes have been studied, in particular in connection with the symmetric group and the \defn{permutahedron}. The combinatorial structure of the $n$-dimensional permutahedron encodes ordered partitions of~$[n+1]$: its vertices are the permutations of~$[n+1]$, its edges correspond to transpositions of adjacent letters, etc.

In this paper, we are interested in graph properties, namely in the diameter and Hamiltonicity, of the $1$-skeleta of certain generalizations of the permutahedra and the associahedra. For the \mbox{$n$-dimensional} permutahedron, the diameter of the \defn{transposition graph} is the number~$\binom{n+1}{2}$ of inversions of the longest permutation of~$[n+1]$. Moreover, H.~Steinhaus~\cite{Steinhaus}, S.~M.~Johnson~\cite{Johnson}, and H.~F.~Trotter~\cite{Trotter} independently designed an algorithm to construct a Hamiltonian cycle of this graph. For the associahedron, the diameter of the \defn{flip graph} motivated intensive research and relevant approaches, involving volumetric arguments in hyperbolic geometry~\cite{SleatorTarjanThurston} and combinatorial properties of Thompson's groups~\cite{Dehornoy}. Recently, L.~Pournin finally gave a purely combinatorial proof that the diameter of the $n$-dimensional associahedron is precisely~$2n-4$ as soon as~$n \ge 9$~\cite{Pournin}. On the other hand, J.~Lucas~\cite{Lucas} proved that the flip graph is Hamiltonian. Later, F.~Hurtado and M.~Noy~\cite{HurtadoNoy} obtained a simpler proof of this result, using a hierarchy of triangulations which organizes all triangulations of convex polygons into an infinite generating~tree. 

Generalizing the classical associahedron, M.~Carr and S.~Devadoss~\cite{CarrDevadoss, Devadoss} defined and constructed \defn{graph associahedra}. For a finite graph~$\graphG$, a \defn{$\graphG$-associahedron}~$\Asso(\graphG)$ is a simple convex polytope whose combinatorial structure encodes the connected subgraphs of~$\graphG$ and their nested structure. To be more precise, the face lattice of the polar of a $\graphG$-associahedron is isomorphic to the \defn{nested complex} on~$\graphG$, defined as the simplicial complex of all collections of tubes (vertex subsets inducing connected subgraphs) of~$\graphG$ which are pairwise either nested, or disjoint and non-adjacent. See Figures~\ref{fig:exmAssociahedra} and~\ref{fig:classicAssociahedra3} for $3$-dimensional examples. The graph associahedra of certain special families of graphs happen to coincide with well-known families of polytopes (see \fref{fig:classicAssociahedra3}): classical associahedra are path associahedra, cyclohedra are cycle associahedra, and permutahedra are complete graph associahedra. Graph associahedra have been geometrically realized in different ways: by successive truncations of faces of the standard simplex~\cite{CarrDevadoss}, as Minkowski sums of faces of the standard simplex~\cite{Postnikov, FeichtnerSturmfels}, or from their normal fans by exhibiting explicit inequality descriptions~\cite{Zelevinsky}. However, we do not consider these geometric realizations as we focus on the combinatorial properties of the nested complex.

\begin{figure}
  \capstart
  \centerline{\includegraphics[width=\textwidth]{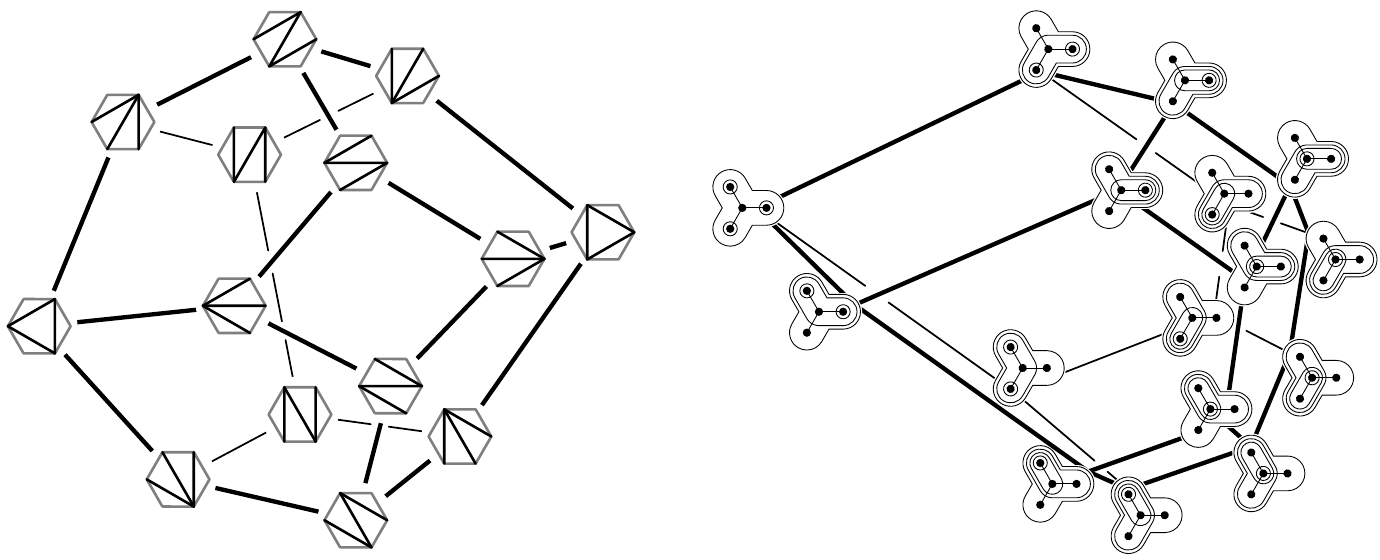}}
  \caption{The $3$-dimensional associahedron and the graph associahedron of the tripod.}
  \label{fig:exmAssociahedra}
\end{figure}

Given a finite simple graph~$\graphG$, we denote by~$\flipGraph(\graphG)$ the $1$-skeleton of the graph associahedron~$\Asso(\graphG)$. In other words, $\flipGraph(\graphG)$ is the facet-ridge graph of the nested complex on~$\graphG$. Its vertices are maximal tubings on~$\graphG$ and its edges connect tubings which differ only by two tubes. See Section~\ref{sec:preliminaries} for precise definitions and examples. In this paper, we study graph properties of~$\flipGraph(\graphG)$. In Section~\ref{sec:diameter}, we focus on the diameter~$\diam{\graphG}$ of the flip graph~$\flipGraph(\graphG)$. We obtain the following structural results.

\begin{theorem}
\label{theo:increasingDiameter}
The diameter~$\diam{\graphG}$ of the flip graph~$\flipGraph(\graphG)$ is non-decreasing: $\diam{\graphG} \le \diam{\graphG'}$ for any two graphs~$\graphG, \graphG'$ such that~$\graphG \subseteq \graphG'$.
\end{theorem}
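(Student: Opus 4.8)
The plan is to reduce to the case of adding a single edge, and then to transport geodesics through a non-expansive surjection between the two flip graphs. First, adding an isolated vertex to a graph changes neither its associahedron (since $\Asso(\cdot)$ sends disjoint unions of graphs to products of polytopes and the associahedron of a one-vertex graph is a point) nor, a fortiori, the diameter of its flip graph. So we may assume that $\graphG$ and $\graphG'$ have the same vertex set~$\ground$; inserting the edges of $\graphG'$ not already in $\graphG$ one at a time, we may further assume that $\graphG' = \graphG + e$ for a single edge~$e$.

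Define a map $\surjection$ from the maximal tubings of $\graphG'$ to the sets of tubes of $\graphG$ as follows: send a maximal tubing $\tubing'$ of $\graphG'$ to the set $\surjection(\tubing')$ of all connected components, in $\graphG$, of the tubes $\tube \in \tubing'$ that happen to be \emph{proper} tubes of $\graphG$. (Each $\tube \in \tubing'$ either remains connected in $\graphG$ or splits there into exactly two pieces, the latter occurring precisely when $\tube$ contains both ends of $e$ and $e$ is a bridge of the subgraph of $\graphG'$ induced on~$\tube$; moreover a piece is discarded exactly when it is a whole connected component of $\graphG$.) I claim that $\surjection$ is a well-defined surjection onto the maximal tubings of $\graphG$ that is \emph{non-expansive}: whenever $\tubing'_1 \flip \tubing'_2$ in $\flipGraph(\graphG')$, one has $\surjection(\tubing'_1) = \surjection(\tubing'_2)$ or $\surjection(\tubing'_1) \flip \surjection(\tubing'_2)$ in $\flipGraph(\graphG)$. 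Granting the claim, the theorem follows: for maximal tubings $\tubing_1, \tubing_2$ of $\graphG$, pick preimages $\tilde\tubing_1, \tilde\tubing_2$ under $\surjection$ and a geodesic $\tilde\tubing_1 = \tilde\tubing^{(0)} \flip \dots \flip \tilde\tubing^{(d)} = \tilde\tubing_2$ of $\flipGraph(\graphG')$, so $d \le \diam{\graphG'}$; applying $\surjection$ termwise gives a walk $\tubing_1 = \surjection(\tilde\tubing^{(0)}), \dots, \surjection(\tilde\tubing^{(d)}) = \tubing_2$ of length at most~$d$ in $\flipGraph(\graphG)$, whence $\tubing_1$ and $\tubing_2$ are at distance at most $\diam{\graphG'}$. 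Taking the maximum over $\tubing_1, \tubing_2$ yields $\diam{\graphG} \le \diam{\graphG'}$.

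To prove the claim I would compare $\surjection$ with the surjections $\Psi_\graphG$ and $\Psi_{\graphG'}$ from the linear orders of $\ground$ onto maximal tubings that send a linear order to its linear-extension tubing (Section~\ref{sec:preliminaries}). The key identity is $\surjection \circ \Psi_{\graphG'} = \Psi_\graphG$: describing the linear-extension tubing of a linear order~$<$ on a graph~$H$ as the family of connected components of the successive vertices of~$<$ in the induced subgraphs obtained by deleting the initial vertices of~$<$ one by one, one checks that deleting~$e$ refines each such component into its $\graphG$-components and that every piece thereby created either already appears as a component indexed by an earlier vertex or is a connected component of $\graphG$ and is discarded. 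Since $\Psi_\graphG$ takes values in, and surjects onto, the maximal tubings of $\graphG$, this identity shows simultaneously that $\surjection$ is well defined and surjective. For non-expansiveness, realize a flip $\tubing'_1 \flip \tubing'_2$ of $\graphG'$ by linear orders $<_1, <_2$ differing by a single adjacent transposition with $\Psi_{\graphG'}(<_i) = \tubing'_i$; then $\surjection(\tubing'_i) = \Psi_\graphG(<_i)$, and $\Psi_\graphG(<_1), \Psi_\graphG(<_2)$ are equal or related by a flip because $\Psi_\graphG$ sends adjacent transpositions to equalities or flips of $\graphG$. (Alternatively, non-expansiveness can be checked directly from the explicit combinatorial description of a flip in the nested complex, comparing the removed tube and the inserted tube together with their common parent and children in $\tubing'$.)

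The technical heart is the identity $\surjection \circ \Psi_{\graphG'} = \Psi_\graphG$ — equivalently, the fact that $\surjection(\tubing')$ always has the correct cardinality: this requires carefully tracking which tube-splittings produce genuinely new tubes, the remaining ones being either redundant or discarded as whole components of $\graphG$. (For the alternative route, the delicate step is instead to rule out that a single flip of $\graphG'$ could induce two independent changes downstairs.) As a sanity check, every tube of $\graphG$ is a tube of $\graphG'$, so in the Minkowski-sum realization~\cite{Postnikov, FeichtnerSturmfels} the polytope $\Asso(\graphG)$ is a Minkowski summand of $\Asso(\graphG')$, which is exactly the geometric situation yielding a non-expansive surjection between vertex sets.
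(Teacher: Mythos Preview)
Your overall strategy is the same as the paper's: reduce to adding one edge, define the splitting map~$\surjection$ that breaks each tube of~$\graphG'$ into its $\graphG$-connected components, and show that~$\surjection$ is a surjective graph map (adjacent tubings go to equal or adjacent tubings). Where you diverge is in how you establish these properties. The paper argues directly: it proves by induction on~$|\tubing|$ that~$|\tubing|\le|\surjection(\tubing)|$ (Lemma~\ref{lem:surjectionIncreasing}), builds an explicit preimage of any tubing on~$\bar\graphG$ (Corollary~\ref{coro:surjectionFlips}), and reads non-expansiveness off the cardinality bound. You instead route everything through the permutahedron, via the surjection~$\Psi_H$ from linear orders of~$\ground$ to maximal tubings of~$H$ and the identity $\surjection\circ\Psi_{\graphG'}=\Psi_{\graphG}$.

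Your route is correct, and the key identity is pleasant; the argument that the ``extra'' piece produced when a tube splits is already the $\graphG$-tube indexed by its own $<$-minimum (or a full component, hence discarded) is exactly what is needed. Two small points: the map~$\Psi$ you cite is not actually introduced in Section~\ref{sec:preliminaries} of this paper, so you should define it (for each~$v$, take the connected component of~$v$ in~$\graphG[\{w:w\ge v\}]$) and check that its fibers are the linear extensions of the spine; and the assertion that every flip lifts to an adjacent transposition deserves a line of justification (choose a linear extension of the spine in which the rotated cover pair is consecutive, then swap). What your approach buys is a clean conceptual picture --- both graph associahedra sit under a common permutahedron cover, and~$\surjection$ is the induced map. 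What the paper's approach buys is self-containment (no appeal to~$\Psi$) and a proof that transfers verbatim to arbitrary building sets, which is exactly how Section~\ref{subsec:nestohedra} extends the result to nestohedra.
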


Related to this diameter, we investigate the \defn{non-leaving-face property}: do all geodesics between two vertices of a face~$\face$ of~$\Asso(\graphG)$ stay in~$\face$? This property was proved for the classical associahedron in~\cite{SleatorTarjanThurston} but the name was coined in~\cite{CeballosPilaud-diameter}. Although not all faces of the graph associahedron~$\Asso(\graphG)$ fulfill this property, we prove in the following statement that some of them do.

\begin{proposition}
\label{prop:upperSet}
Any tubing on a geodesic between two tubings~$\tubing$ and~$\tubing'$ in the flip graph~$\flipGraph(\graphG)$ contains any common upper set to the inclusion posets of~$\tubing$ and~$\tubing'$.
\end{proposition}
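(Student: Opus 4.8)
For a tubing $\nested$ on $\graphG$ write $\face_\nested$ for the face of $\Asso(\graphG)$ formed by all maximal tubings containing $\nested$; the statement asserts that every geodesic of $\flipGraph(\graphG)$ between two vertices of $\face_U$ stays inside $\face_U$, whenever $U$ is a common upper set of the inclusion posets of its endpoints $\tubing$ and $\tubing'$. Since $\flipGraph(\graphG)$ is the Cartesian product of the flip graphs of the connected components of $\graphG$, and $U$, $\tubing$, $\tubing'$ and geodesics all split along this product, I would first reduce to $\graphG$ connected, so that the ground set $\ground$ is the unique maximal tube of every maximal tubing.

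I would then argue by induction on $|U|$. The cases $|U| \le 1$ are immediate ($U$ is empty or equal to $\{\ground\}$, which lies in every maximal tubing). Assume $|U| \ge 2$, pick a tube $\tube$ minimal in $U$, and set $U' \eqdef U \ssm \{\tube\}$, still a common upper set of $\tubing$ and $\tubing'$. Since $U$ is an upper set of $\tubing$, the smallest tube of $\tubing$ properly containing $\tube$ is the parent $\nestelem$ of $\tube$ in $U$, and $\nestelem \in U'$; the same holds in $\tubing'$. Let $\gamma$ be a geodesic from $\tubing$ to $\tubing'$. By the induction hypothesis $\gamma$ stays in $\face_{U'}$, hence is a geodesic of the subgraph $\face_{U'}$; but $\face_{U'}$, being a face of a graph associahedron, is combinatorially a product $\prod_i \Asso(\graphG_i)$ of graph associahedra whose $1$-skeleton is the Cartesian product of theirs, so $\gamma$ projects to a geodesic $\bar\gamma$ in the factor in which $\tube$ lies (the one indexed by $\nestelem$). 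There $\tube$ becomes a tube $\bar\tube$ whose only proper superset in each endpoint of $\bar\gamma$ is the entire ground set of that smaller graph --- because the parent of $\tube$ in $\tubing$ and in $\tubing'$ is $\nestelem$ --- so $\bar\tube$ is a \emph{summit} (a maximal proper tube) of both endpoints of $\bar\gamma$. As $\tube$ belongs to a tubing $\building$ of $\gamma$ if and only if $\bar\tube$ belongs to the corresponding factor of $\building$, it now suffices to prove: \emph{if $\tube$ is a summit of two maximal tubings $\tubing$ and $\tubing'$ of a connected graph, then every geodesic between them contains $\tube$.}

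For this I would adapt the argument of Sleator, Tarjan and Thurston for the classical associahedron. A tube $\tube$ in a maximal tubing $\building$ splits $\building$ into a maximal tubing on the induced subgraph $\graphG[\tube]$ and one on the reconnected complement $\graphG \ominus \tube$, and $\face_\tube \cong \Asso(\graphG[\tube]) \times \Asso(\graphG \ominus \tube)$; the goal is to show that, when $\tube$ is a summit of the two endpoints, $\face_\tube$ is \emph{isometrically embedded} in $\flipGraph(\graphG)$, which yields the non-leaving-face property for $\face_\tube$. That the distance between the endpoints is at most the sum of the distances of the two restrictions in the respective flip graphs is clear (flip inside $\tube$, then outside). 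For the reverse, given a geodesic $\gamma$ that drops $\tube$, I would look at a maximal run of tubings of $\gamma$ missing $\tube$: its first flip replaces $\tube$ by a tube $\tube^\star$ disjoint from but adjacent to $\tube$ with $\tube \cup \tube^\star$ again a tube (the local form of a flip), and its last flip reinserts $\tube$; using the commutation relations among flips one pushes the flip discarding $\tube$ forward until it meets the flip reinserting it and cancels the pair, strictly shortening $\gamma$ --- a contradiction. The hypothesis that $\tube$ is a summit, equivalently that $\{\tube, \ground\}$ is an upper set, is precisely what keeps the flip-out of $\tube$ unprofitable along the way; for a general tube it can be profitable, which is why the non-leaving-face property fails for general faces of $\Asso(\graphG)$.

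The heart of the argument --- and the step I expect to be hardest --- is this normalization: pinning down the precise local form of a flip that removes a summit, establishing the commutation (and ``pentagon/hexagon'') relations among flips of a graph associahedron in enough generality to reorder $\gamma$, and tracking the tube $\tube$ together with its slot $\tube \flip \tube^\star$ along $\gamma$ so that the cancellation can really be carried out. I anticipate that the bookkeeping, rather than any isolated fact, will be the main obstacle.
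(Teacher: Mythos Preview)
Your inductive reduction to the ``summit'' case is sound and is a genuinely different route from the paper's proof: once a geodesic is known to lie in the face~$\face_{U'}$, that face is indeed a product of graph associahedra, the projection of a geodesic in a Cartesian product to any factor is again a geodesic, and your observation that the parent of~$\tube$ in both~$\tubing$ and~$\tubing'$ must be~$\nestelem$ (since~$U$ is an upper set containing~$\tube$) does force~$\bar\tube$ to be a summit of both projected endpoints. So far so good.

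The real gap is in your summit lemma, and it is not just bookkeeping. Your description of the flip that removes a summit is incorrect: when~$\tube$ is a child of the root~$r$ and~$v = \lab(\tube,\tubing)$, the flip replaces~$\tube$ by the connected component of~$\graphG[\ground\ssm\{v\}]$ containing~$r$. This component is in general \emph{not} disjoint from~$\tube$; for instance on the $4$-cycle~$1\text{--}2\text{--}3\text{--}4\text{--}1$ with~$\tube=\{1,2,3\}$, root~$4$ and~$v=3$, the flip produces~$\tube^\star=\{1,2,4\}$, which overlaps~$\tube$. So the local picture on which your ``push the flip forward and cancel'' plan rests is wrong, and the commutation/pentagon relations you allude to are not stated, let alone shown to suffice. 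Since this summit lemma is invoked at every step of your induction, the whole argument currently hinges on an unproven (and mis-described) claim.

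The paper bypasses this difficulty entirely. It never isolates a single tube: it first enlarges~$U$ to the \emph{maximal} common upper set~$\tubing^\uparrow$, shows that~$\tubing^\uparrow$ is an \emph{upper ideal face} (the label of every non-minimal element is a singleton), and then builds a global \defn{normalization} map~$\normalization$ onto~$\face_{\tubing^\uparrow}$ using the surjection~$\surjection$ from the diameter monotonicity argument (here~$\surjection$ goes to the smaller building set of all tubes contained in a minimal element of~$\tubing^\uparrow$ together with the remaining singletons). Any path leaving~$\face_{\tubing^\uparrow}$ is shown to produce at least two repetitions under~$\normalization$, so it cannot be a geodesic. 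Note that your base object~$\{\tube,\ground\}$ is \emph{not} an upper ideal face unless~$|\ground\ssm\tube|=1$, so you could not simply plug your summit case into the paper's machinery; the point of working with the maximal common upper set is precisely that it has enough structure for the projection to exist. If you want to salvage your approach, the cleanest fix is to replace the flip-pushing argument by such a projection, now applied inside the factor of~$\face_{U'}$ where~$\bar\tube$ lives.
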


In fact, we extend Theorem~\ref{theo:increasingDiameter} and Proposition~\ref{prop:upperSet} to all \defn{nestohedra}~\cite{Postnikov, FeichtnerSturmfels}, see Section~\ref{sec:diameter}. Finally, using Theorem~\ref{theo:increasingDiameter} and Proposition~\ref{prop:upperSet}, the lower bound on the diameter of the associahedron~\cite{Pournin}, the usual construction of graph associahedra~\cite{CarrDevadoss,Postnikov} and the diameter of graphical zonotopes, we obtain the following inequalities on the diameter~$\diam{\graphG}$ of~$\flipGraph(\graphG)$.

\begin{theorem}
For any connected graph~$\graphG$ with~$n+1$ vertices and~$e$ edges, the diameter~$\diam{\graphG}$ of the flip graph~$\flipGraph(\graphG)$ is bounded by
\[
\max(e,2n - 18) \le \diam{\graphG} \le \binom{n+1}{2}.
\]
\end{theorem}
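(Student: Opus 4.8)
The plan is to establish the two bounds separately, each by combining the structural results already in hand with the known extremal cases.

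For the \emph{upper bound}~$\diam{\graphG} \le \binom{n+1}{2}$, I would use Theorem~\ref{theo:increasingDiameter}: since~$\graphG$ has~$n+1$ vertices, it is a subgraph of the complete graph~$\completeG_{n+1}$ on~$n+1$ vertices, hence $\diam{\graphG} \le \diam{\completeG_{n+1}}$. It remains to recall that $\Asso(\completeG_{n+1})$ is the $n$-dimensional permutahedron, whose flip graph is the transposition graph on~$\fS_{n+1}$, and whose diameter is the number of inversions of the longest permutation, namely~$\binom{n+1}{2}$. This is exactly the permutahedron fact quoted in the introduction, so this direction should be short.

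For the \emph{lower bound}, there are two quantities under the maximum. The bound $\diam{\graphG} \ge e$ should come from the graphical zonotope of~$\graphG$ (the Minkowski sum of the segments associated to the edges of~$\graphG$): its $1$-skeleton is the ``restricted'' transposition graph, its diameter equals the number~$e$ of edges of~$\graphG$ (each edge of~$\graphG$ contributes one adjacent transposition that must be performed), and the graphical zonotope is a Minkowski summand / coarsening of~$\Asso(\graphG)$ in a way compatible with the surjection~$\surjection$ from maximal tubings; I would make this precise by exhibiting two tubings whose images under this surjection are the two extreme vertices of the zonotope, and then invoking Theorem~\ref{theo:increasingDiameter} or Proposition~\ref{prop:upperSet} to transfer the lower bound, noting that any flip in~$\flipGraph(\graphG)$ changes at most one of the relevant coordinates. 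The bound $\diam{\graphG} \ge 2n-18$ should come from the associahedron: $\graphG$ is connected on~$n+1$ vertices, so it contains a spanning tree, which in turn contains a path~$\pathG_k$ on some $k+1 \ge $ (some function of $n$) vertices; by Theorem~\ref{theo:increasingDiameter} applied to $\pathG_k \subseteq \graphG$ we get $\diam{\graphG} \ge \diam{\pathG_k}$, and $\diam{\pathG_k}$ is the diameter of the $(k-1)$-dimensional associahedron, which is~$2(k-1)-4$ for $k-1 \ge 9$ by Pournin~\cite{Pournin}. The constant~$18$ presumably absorbs the worst case where the longest path in a spanning tree is short (a ``spider''/star-like tree on $n+1$ vertices can have its longest path as short as roughly $2\log$ or even constant — wait, a spanning tree of a connected graph on $n+1$ vertices always contains a path on at least, hmm). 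The subtle point: a spanning tree need not contain a long path (think of a star), so one cannot in general extract~$\pathG_{\Theta(n)}$; instead I expect the argument to use a caterpillar decomposition or the fact that one may choose the path/structure more cleverly, or to bound $\diam{\graphG}$ below via a different subgraph. I would look for the cleanest such subgraph and optimize the additive constant, which is where the ``$-18$'' comes from.

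The main obstacle I anticipate is precisely this last step: extracting from an \emph{arbitrary} connected graph~$\graphG$ on~$n+1$ vertices a subgraph whose $\graphG$-associahedron has diameter at least $2n - O(1)$. Since Theorem~\ref{theo:increasingDiameter} only lets us go \emph{down} to subgraphs, and associahedron diameters are linear in dimension only for paths, the work is in finding a subgraph $\graphG' \subseteq \graphG$ with $n+1$ vertices (or close to it) that is a disjoint-union-of-paths whose total size is~$\Theta(n)$ — equivalently, covering most vertices of a spanning tree of~$\graphG$ by a small number of vertex-disjoint paths, then using additivity of the associahedron diameter over connected components together with Pournin's bound on each component of size~$\ge 9$. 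Handling the small components (those of size~$<9$, which Pournin's theorem does not cover) and counting how many vertices they can eat up is exactly what forces the loss of an additive constant; getting it down to~$18$ is a careful but routine optimization once the decomposition is set up.
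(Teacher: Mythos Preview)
Your upper bound and the $e$ lower bound are both correct and match the paper's argument essentially verbatim: monotonicity plus the permutahedron for the upper bound, and the fact that the normal fan of~$\Asso(\graphG)$ refines that of the graphical zonotope (whose diameter is~$e$) for the first half of the lower bound.

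The gap is in your approach to~$\diam{\graphG} \ge 2n-18$. Your plan is to pass to a subgraph of~$\graphG$ (or of a spanning tree of~$\graphG$) that is a disjoint union of paths, and then sum up Pournin's bound over the components. This cannot work. Take~$\graphG = \completeG_{1,n}$, the star: any collection of vertex-disjoint paths inside it consists of at most one path on three vertices together with isolated vertices, because all edges share the center. The diameter of that subgraph's associahedron is a constant, nowhere near~$2n$. More generally, any tree in which every path has bounded length (e.g.\ bounded-depth trees with high branching) defeats the disjoint-paths idea, yet the bound~$2n-18$ must still hold for them. So ``covering most vertices by a small number of vertex-disjoint paths'' is impossible in exactly the cases you flagged as worrying, and no optimization of constants rescues it.

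The paper's argument for~$2n-18$ is genuinely different and does \emph{not} proceed purely through Theorem~\ref{theo:increasingDiameter}. After reducing to a spanning tree~$\graphG[T]$, it treats trees with~$2$, $3$, or~$4$ leaves by hand (splitting at a branch vertex into paths or a path plus a tripod, and invoking the non-leaving-face property of Proposition~\ref{prop:upperSet} to add the diameters of the pieces plus one). For trees with~$k \ge 5$ leaves it constructs two explicit maximal tubings~$\tubing, \tubing'$ on~$\graphG[T]$, built from a diametral pair on the leaf-deleted tree~$\bar{\graphG[T]}$ together with two opposite chains of ``big'' tubes~$\ground \ssm \{\leave_1,\dots,\leave_i\}$. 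It then lower-bounds their distance by pushing any geodesic through the surjection~$\surjection$ onto~$\flipGraph(\bar{\graphG[T]} \sqcup \{\leave_1,\dots,\leave_k\})$ and counting at least~$2k$ forced repetitions (flips of the big tubes collapse under~$\surjection$), with Proposition~\ref{prop:upperSet} handling the delicate case where one big-tube flip accidentally produces another. This is where the constant~$18$ comes from: it is the loss in the $4$-leaf base case, not from any path-cover accounting.
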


In Section~\ref{sec:hamiltonicity}, we study the Hamiltonicity of~$\flipGraph(\graphG)$. Based on an inductive decomposition of graph associahedra, we show the following statement.

\begin{theorem}
\label{theo:Hamiltonian}
For any graph~$\graphG$ with at least two edges, the flip graph~$\flipGraph(\graphG)$ is Hamiltonian.
\end{theorem}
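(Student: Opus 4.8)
The plan is to prove Hamiltonicity by induction on the number of vertices of $\graphG$, exploiting the standard inductive decomposition of graph associahedra: if $\ground = \ground(\graphG)$ is the vertex set, then the facets of $\Asso(\graphG)$ are indexed by the proper tubes $\tube \subsetneq \ground$, and the facet associated to $\tube$ is combinatorially isomorphic to the product $\Asso(\graphG[\tube]) \times \Asso(\graphG / \tube)$, where $\graphG[\tube]$ is the induced subgraph on $\tube$ and $\graphG / \tube$ is the graph obtained by contracting $\tube$ to a single vertex. In particular, picking a vertex $v$ of $\graphG$ of degree at least $1$, the facets corresponding to the singleton tube $\{v\}$ and to $\ground \ssm \{v\}$ (when the latter induces a connected subgraph) give recursive copies of smaller graph associahedra inside $\flipGraph(\graphG)$. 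The maximal tubings on $\graphG$ are then partitioned according to which ``slice'' they belong to, and one builds a Hamiltonian cycle of $\flipGraph(\graphG)$ by stitching together Hamiltonian paths through these slices.

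First I would set up the base cases: the smallest graphs with at least two edges are the path $\pathG$ on three vertices and the triangle $\cycleG$ on three vertices, whose flip graphs are respectively the pentagon $\flipGraph(\pathG_3)$ (a $5$-cycle) and the hexagon $\flipGraph(\completeG_3)$ (a $6$-cycle), both trivially Hamiltonian; more generally one should check by hand all graphs on three vertices and perhaps the disconnected cases with exactly two edges. Second, for a disconnected graph $\graphG = \graphG_1 \sqcup \graphG_2$, the nested complex factors, so $\flipGraph(\graphG) = \flipGraph(\graphG_1) \times \flipGraph(\graphG_2)$; since the Cartesian product of two graphs each having a Hamiltonian path (and at least one having a Hamiltonian cycle, or by a standard prism-type argument) is Hamiltonian, this case reduces to the connected case, so I would henceforth assume $\graphG$ connected. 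Third, for $\graphG$ connected with at least two edges, I would select an appropriate vertex $v$ — ideally one whose deletion keeps $\graphG$ connected, e.g. a non-cut vertex, which always exists — and consider the decomposition of the vertices of $\flipGraph(\graphG)$ into the facet $\face_v$ of tubings containing $\{v\}$ (isomorphic to $\flipGraph(\graphG \ssm v)$, a smaller graph associahedron) and its complement. The inductive hypothesis gives a Hamiltonian cycle on $\face_v$; the task is to reroute it through the vertices outside $\face_v$.

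The main obstacle, and the technical heart of the argument, is controlling the ``off-facet'' vertices: the tubings on $\graphG$ not containing the tube $\{v\}$ must be swept out by short detours hanging off the Hamiltonian cycle of $\face_v$, and for the induction to close one needs these detours to exist at \emph{every} vertex of the inductive cycle (or enough of them), which requires a careful compatibility condition between the Hamiltonian cycle produced for $\graphG \ssm v$ and the flip structure reattaching $v$. A clean way to handle this is to strengthen the induction hypothesis: prove not merely that $\flipGraph(\graphG)$ is Hamiltonian but that it admits a Hamiltonian cycle using a prescribed edge, or one that ``respects'' a chosen facet decomposition (so that consecutive pieces can be glued), in the spirit of the Hurtado--Noy generating tree of triangulations~\cite{HurtadoNoy} and of Lucas's original argument~\cite{Lucas}. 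I would then verify that the facet $\face_v \cong \flipGraph(\graphG \ssm v) \times \{\,\cdot\,\}$ together with the neighbouring facets tile all of $\flipGraph(\graphG)$ in a ``spinal'' fashion, and that the strengthened hypothesis propagates. The disconnected-graph step and the product formula for $\flipGraph$ are routine once the connected case with the reinforced statement is in hand; the delicate point remains engineering the inductive invariant so that the gluing of Hamiltonian paths across the facet decomposition is always possible, in particular near low-degree vertices of $\graphG$ where the recursive structure is most constrained.
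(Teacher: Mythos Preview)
Your high-level instincts are right --- induction on $|\ground|$, a strengthened inductive hypothesis, and the Cartesian-product reduction for disconnected graphs are exactly what the paper uses --- but the specific decomposition you propose does not work, and the one that does is genuinely different.

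First, a factual slip: the facet $\face_{\{v\}}$ of tubings containing the singleton tube $\{v\}$ is \emph{not} isomorphic to $\flipGraph(\graphG \ssm v)$. By the Carr--Devadoss facet formula it is $\Asso(\graphG^\ast_{\{v\}})$, where $\graphG^\ast_{\{v\}}$ is the reconnected complement (the graph on $\ground \ssm \{v\}$ with an extra edge between every pair of neighbours of $v$). For instance, in the path $1\!-\!2\!-\!3\!-\!4$ the facet $\face_{\{2\}}$ is a pentagon, whereas $\flipGraph(\graphG \ssm 2)$ is a single edge. So your inductive anchor is attached to the wrong graph.

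Second, and more seriously, the scheme ``take one facet $\face_{\{v\}}$, build a Hamiltonian cycle there, then reroute through its complement via short detours'' has no structure to exploit: the complement of a single facet in $\flipGraph(\graphG)$ is not a disjoint union of smaller graph associahedra, and there is no evident way to organise the missing vertices into controllable chunks. The paper instead uses the facets at the \emph{opposite} end of the tube lattice: for connected $\graphG$, the maximal tubings are \emph{partitioned} according to the root of their spine, and each fixed-root piece $\flipGraph_v(\graphG)$ (the facet for the tube $\ground \ssm \{v\}$) is genuinely isomorphic to $\flipGraph(\graphG{}[\hat v])$. One then produces a Hamiltonian cycle in each $\flipGraph_v(\graphG)$ by induction and glues consecutive pieces along an ordering $v_1,\dots,v_{n+1}$ of $\ground$ using \emph{bridges}: squares in $\flipGraph(\graphG)$ consisting of two long flips (swapping the root between $v_i$ and $v_{i+1}$) and two short flips. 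The strengthened hypothesis that makes the gluing go through is not ``a prescribed edge'' but the precise statement of Theorem~\ref{theo:Hamiltonian2}: any pair of \emph{short} flips with \emph{distinct short roots} lies on a Hamiltonian cycle. This is exactly what is needed so that the two bridge short-flips $\bridge_{i-1}[v_i]$ and $\bridge_i[v_i]$ land on the inductive cycle in $\flipGraph_{v_i}(\graphG)$, and it is delicate enough that stars and small graphs require separate treatment.
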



\section{Preliminaries}
\label{sec:preliminaries}


\subsection{Tubings, nested complex, and graph associahedron}

Let~$\ground$ be an $(n+1)$-elements ground set, and let~$\graphG$ be a simple graph on~$\ground$ with~$\connectedComponents{\graphG}$ connected components. We denote by~$\graphG{}[U]$ the subgraph of~$\graphG$ induced by a subset~$U$ of~$\ground$.

A \defn{tube} of~$\graphG$ is a non-empty subset~$\tube$ of~$\ground$ that induces a connected subgraph of~$\graphG$. A tube is \defn{proper} if it does not induce a connected component of~$\graphG$. The set of all tubes of~$\graphG$ is called the \defn{graphical building set} of~$\graphG$ and denoted by~$\building(\graphG)$. We moreover denote by~$\building(\graphG)_{\max}$ the set of inclusion maximal tubes of~$\building(\graphG)$, \ie the vertex sets of connected components of~$\graphG$.

Two tubes~$\tube$ and~$\tube'$ are \defn{compatible} if they are 
\begin{itemize}
\item nested, \ie $\tube \subseteq \tube'$ or~$\tube' \subseteq \tube$, or
\item disjoint and non-adjacent, \ie $\tube \cup \tube'$ is not a tube of~$\graphG$.
\end{itemize}
A \defn{tubing} on~$\graphG$ is a set of pairwise compatible tubes of~$\graphG$. A tubing is \defn{proper} if it contains only proper tubes and~\defn{loaded} if it contains~$\building(\graphG)_{\max}$. Since inclusion maximal tubes are compatible with all tubes, we can transform any tubing~$\tubing$ into a proper tubing~${\tubing \ssm \building(\graphG)_{\max}}$ or into a loaded tubing~$\tubing \cup \building(\graphG)_{\max}$, and we switch along the paper to whichever version suits better the current purpose.  Observe by the way that maximal tubings are automatically loaded. \fref{fig:tubings} illustrates these notions on a graph with $9$ vertices.

\begin{figure}[h]
  \capstart
  \centerline{\includegraphics[scale=1]{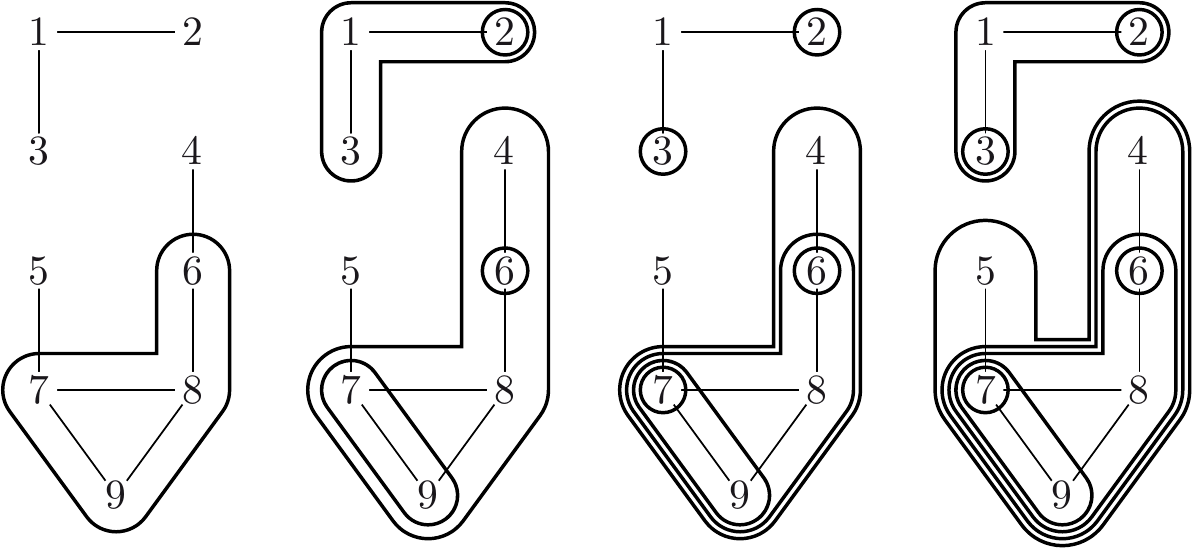}}
  \caption{A proper tube, a tubing, a maximal proper tubing, and a maximal (loaded) tubing.}
  \label{fig:tubings}
\end{figure}

The \defn{nested complex} on~$\graphG$ is the simplicial complex~$\nestedComplex(\graphG)$ of all proper tubings on~$\graphG$. This complex is known to be the boundary complex of the \defn{graph associahedron}~$\Asso(\graphG)$, which is an $(n+1-\connectedComponents{\graphG})$-dimensional simple polytope. This polytope was first constructed in~\cite{CarrDevadoss, Devadoss}\footnote{The definition used in~\cite{CarrDevadoss, Devadoss} slightly differs from ours for disconnected graphs, but our results still hold in their framework.} and later in the more general context of nestohedra in~\cite{Postnikov, FeichtnerSturmfels, Zelevinsky}. In this paper, we do not need these geometric realizations since we only consider combinatorial properties of the nested complex~$\nestedComplex(\graphG)$. In fact, we focus on the \defn{flip graph}~$\flipGraph(\graphG)$ whose vertices are maximal proper tubings on~$\graphG$ and whose edges connect adjacent maximal proper tubings, \ie which only differ by two tubes. We refer to \fref{fig:exmFlip} for an example, and to Section~\ref{subsec:flips} for a description of flips. To avoid confusion, we always use the term \emph{edge} for the edges of the graph~$\graphG$, and the term \emph{flip} for the edges of the flip graph~$\flipGraph(\graphG)$. To simplify the presentation, it is sometimes more convenient to consider the \defn{loaded flip graph}, obtained from~$\flipGraph(\graphG)$ by loading all its vertices with~$\building(\graphG)_{\max}$, and still denoted by~$\flipGraph(\graphG)$. Note that only proper tubes can be flipped in each maximal tubing on the loaded flip graph.

Observe that if~$\graphG$ is disconnected with connected components~$\graphG_i$, for ${i \in [\connectedComponents{\graphG}]}$, then the nested complex~$\nestedComplex(\graphG)$ is the join of the nested complexes~$\nestedComplex(\graphG_i)$, the graph associahedron~$\Asso(\graphG)$ is the Cartesian product of the graph associahedra~$\Asso(\graphG_i)$, and the flip graph~$\flipGraph(\graphG)$ is the Cartesian product of the flip graphs~$\flipGraph(\graphG_i)$. In many places, this allows us to restrict our arguments to connected graphs.

\begin{example}[Classical polytopes]
For certain families of graphs, the graph associahedra turn out to coincide (combinatorially) with classical polytopes (see \fref{fig:classicAssociahedra3}):
\begin{enumerate}[(i)]
\item the path associahedron~$\Asso(\pathG_{n+1})$ coincides with the $n$-dimensional associahedron,
\item the cycle associahedron~$\Asso(\cycleG_{n+1})$ coincides with the $n$-dimensional cyclohedron,
\item the complete graph associahedron~$\Asso(\completeG_{n+1})$ coincides with the $n$-dimensional permutahedron~$\Perm(n) \eqdef \conv \set{(\sigma(1), \dots, \sigma(n+1))}{\sigma \in \fS_{n+1}}$.
\end{enumerate}

\begin{figure}[hbtp]
  \capstart
  \centerline{\includegraphics[width=\textwidth]{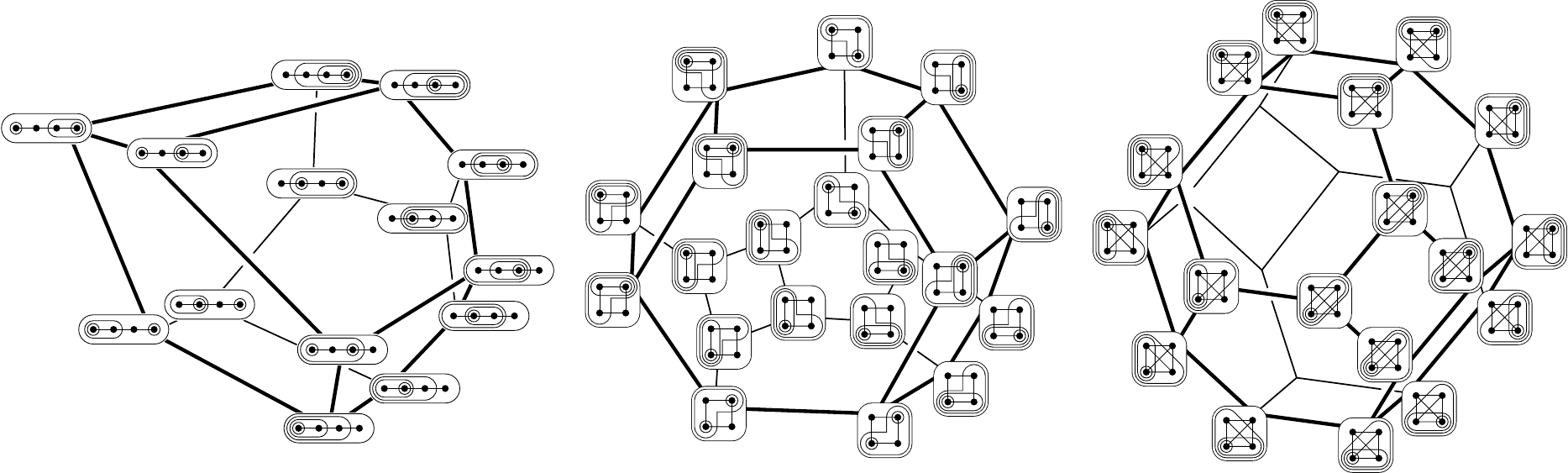}}
  \caption{The associahedron, the cyclohedron, and the permutahedron are graph associahedra.}
  \label{fig:classicAssociahedra3}
\end{figure}
\end{example}


\subsection{Spines}
\label{subsec:spines}

Spines provide convenient representations of the tubings on~$\graphG$. Given a tubing~$\tubing$ on~$\graphG$, the corresponding \defn{spine}~$\spine$ is the Hasse diagram of the inclusion poset on~$\tubing \cup \building(\graphG)_{\max}$, where the node corresponding to a tube~$\tube \in \tubing \cup \building(\graphG)_{\max}$ is labeled by~$\lab(\tube,\tubing) \eqdef \tube \ssm \bigcup \set{\tube' \in \tubing}{\tube' \subsetneq \tube}$. See~\fref{fig:exmFlip}.

The compatibility condition on the tubes of~$\tubing$ implies that the spine~$\spine$ is a rooted forest, where roots correspond to elements of~$\building(\graphG)_{\max}$. Spines are in fact called $\building(\graphG)$-forests in~\cite{Postnikov}.
The labels of~$\spine$ define a partition of the vertex set of~$\graphG$. The tubes of~${\tubing \cup \building(\graphG)_{\max}}$ are the descendants sets~$\descendants(\snode,\spine)$ of the nodes~$\snode$ of the forest~$\spine$, where~$\descendants(\snode, \spine)$ denotes the union of the labels of the descendants of~$\snode$ in~$\spine$, including~$\snode$ itself. The tubing~$\tubing \cup \building(\graphG)_{\max}$ is maximal if and only if all labels are singletons, and we then identify nodes with their labels, see~\fref{fig:exmFlip}.

Let~$\tubing, \bar\tubing$ be tubings on~$\graphG$ with corresponding spines~$\spine, \bar\spine$. Then~$\bar\tubing \subseteq \tubing$ if and only if~$\bar\spine$ is obtained from~$\spine$ by edge contractions. We say that~$\spine$ \defn{refines}~$\bar\spine$, that~$\bar\spine$ \defn{coarsens}~$\spine$, and we write~$\bar\spine \coarsens \spine$.
Given any node~$\snode$ of~$\spine$, we denote by~$\spine_{\snode}$ the \defn{subspine} of~$\spine$ induced by all descendants of~$\snode$ in~$\spine$, including~$\snode$ itself.


\subsection{Flips}
\label{subsec:flips}

As already mentioned, the nested complex~$\nestedComplex(\graphG)$ is a simplicial sphere. It follows that there is a natural flip operation on maximal proper tubings on~$\graphG$. Namely, for any maximal proper tubing~$\tubing$ on~$\graphG$ and any tube~$\tube \in \tubing$, there exists a unique proper tube~$\tube' \notin \tubing$ of~$\graphG$ such that~$\tubing' \eqdef \tubing \symdif \{\tube,\tube'\}$ is again a proper tubing on~$\graphG$ (where~$\symdif$ denotes the symmetric difference operator). We denote this flip by~$\tubing \flip \tubing'$. This flip operation can be explicitly described both in terms of tubings and spines as follows.

Consider a tube~$\tube$ in a maximal proper tubing~$\tubing$, with~$\lab(\tube, \tubing) = \{v\}$. Let~$\bar \tube$ denote the smallest element of~$\tubing \cup \building(\graphG)_{\max}$ strictly containing~$\tube$, and denote its label by~$\lab(\bar \tube, \tubing) = \{v'\}$. Then the unique tube~$\tube'$ such that~$\tubing' \eqdef \tubing \symdif \{\tube,\tube'\}$ is again a proper tubing on~$\graphG$ is the connected component of the induced subgraph~$\graphG{}[\bar \tube \ssm \{v\}]$ containing~$v'$. See \fref{fig:exmFlip}.

This description translates to spines as follows. The flip between the tubings~$\tubing$ and~$\tubing'$ corresponds to a rotation between the corresponding spines~$\spine$ and~$\spine'$. This operation is local: it only perturbs the nodes~$v$ and~$v'$ and their children. More precisely, $v$ is a child of~$v'$ in~$\spine$, and becomes the parent of~$v'$ in~$\spine'$. Moreover, the children of~$v$ in~$\spine$ contained in~$\tube'$ become children of~$v'$ in~$\spine'$. All other nodes keep their parents. See \fref{fig:exmFlip}.

\begin{figure}[hbtp]
 \centerline{
 	\begin{overpic}[scale=1]{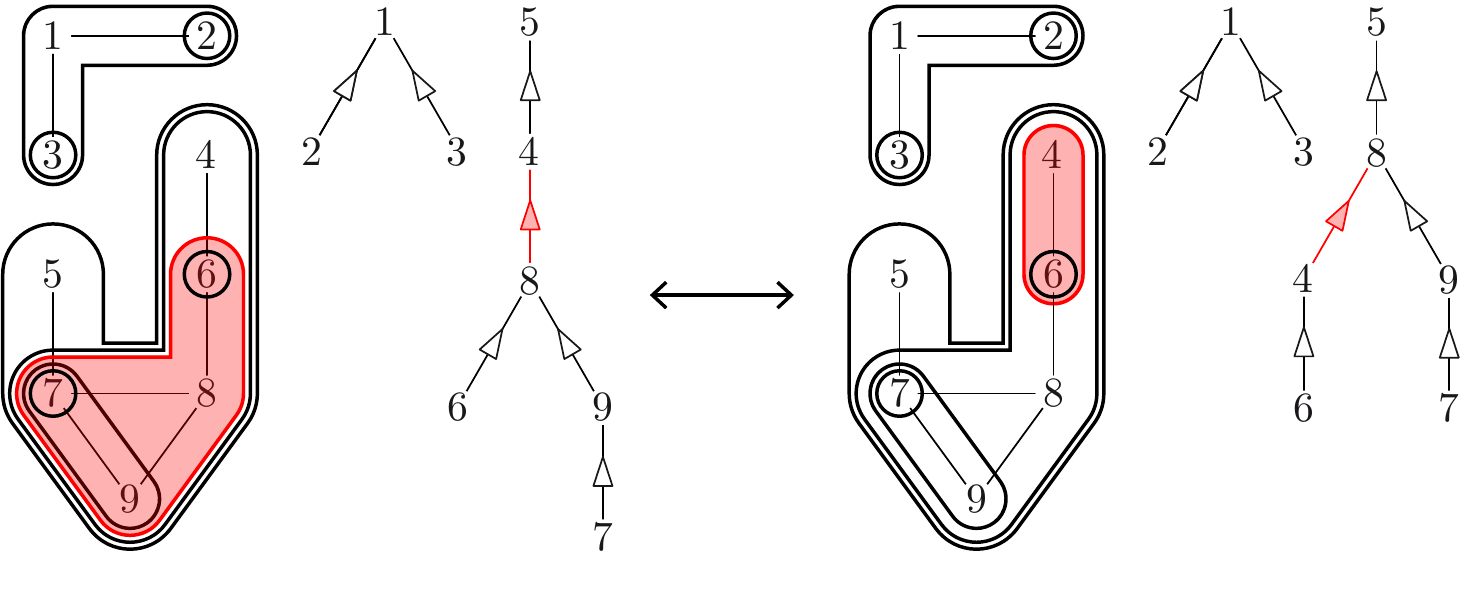}
		\put(48,22){flip}
		\put(5,1){tubing~$\tubing$}
		\put(28,1){spine~$\spine$}
		\put(63,1){tubing~$\tubing'$}
		\put(85,1){spine~$\spine'$}
	\end{overpic}
 }
 \caption{The flip of a proper tube (shaded, red) in a maximal tubing seen both on the tubings and on the corresponding spines.}
  \label{fig:exmFlip}
\end{figure}


\section{Diameter}
\label{sec:diameter}

Let~$\diam{\graphG}$ denote the diameter of the flip graph~$\flipGraph(\graphG)$. For example, for the complete graph~$\completeG_{n+1}$, the diameter of the $n$-dimensional permutahedron is~${\diam{K_{n+1}} = \binom{n+1}{2}}$, while for the path~$\pathG_{n+1}$, the diameter of the classical $n$-dimensional associahedron is~${\diam{\pathG_{n+1}} = 2n - 4}$ for~$n > 9$, by results of~\cite{SleatorTarjanThurston, Pournin}. We discuss in this section properties of the diameter~$\diam{\graphG}$ and of the geodesics in the flip graph~$\flipGraph(\graphG)$. The results of Section~\ref{subsec:nonDecreasingDiam} are extended to nestohedra in Section~\ref{subsec:nestohedra}. We prefer to present the ideas first on graph associahedra as they prepare the intuition for the more technical proofs on nestohedra.


\subsection{Non-decreasing diameters}
\label{subsec:nonDecreasingDiam}

Our first goal is to show that~$\diam{\cdot}$ is non-decreasing.

\begin{theorem}
\label{theo:increasingDiameter2}
$\diam{\bar\graphG} \le \diam{\graphG}$ for any two graphs~$\graphG, \bar\graphG$ such that~$\bar\graphG \subseteq \graphG$.
\end{theorem}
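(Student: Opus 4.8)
The strategy is to show that $\bar\graphG \subseteq \graphG$ induces a map from the flip graph $\flipGraph(\graphG)$ onto the flip graph $\flipGraph(\bar\graphG)$ that does not increase distances, so that the diameter of the target is at most the diameter of the source. Since every tube of $\bar\graphG$ is a tube of $\graphG$ (adding edges only creates more connected induced subgraphs, never destroys connectedness), a natural candidate for such a map on the level of tubings is $\tubing \mapsto \{\tube \in \tubing \mid \tube \text{ is a tube of } \bar\graphG\}$, but this is not quite enough: the image need not be a \emph{maximal} proper tubing on $\bar\graphG$. So first I would define, for a maximal proper tubing $\tubing$ on $\graphG$, a well-chosen maximal proper tubing $\surjection(\tubing)$ on $\bar\graphG$ extending the restriction $\tubing \cap \building(\bar\graphG)$. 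The clean way to do this is via spines: the spine $\spine$ of $\tubing$ is a rooted forest on the vertex set, and I want to read off from it a linear-extension-like refinement compatible with $\bar\graphG$. Concretely, repeatedly take a node of $\spine$ that is not a tube of $\bar\graphG$ (meaning its descendants set is disconnected in $\bar\graphG$) and split/adjust; more uniformly, one can process the vertices in an order compatible with $\spine$ (children before parents) and greedily build a maximal tubing on $\bar\graphG$ — this is exactly the kind of "normalization" the paper's notation $\normalization$ hints at.

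The key step is then to check that this map $\surjection$ is \textbf{$1$-Lipschitz}, i.e.\ that if $\tubing \flip \tubing'$ is a flip in $\flipGraph(\graphG)$ then $\surjection(\tubing)$ and $\surjection(\tubing')$ are either equal or joined by a single flip (equivalently, at flip-distance $\le 1$) in $\flipGraph(\bar\graphG)$. Given such a map, any geodesic $\tubing = \tubing_0 \flip \tubing_1 \flip \cdots \flip \tubing_d = \tubing'$ in $\flipGraph(\graphG)$ maps to a walk of length $\le d$ from $\surjection(\tubing)$ to $\surjection(\tubing')$ in $\flipGraph(\bar\graphG)$, so $d_{\bar\graphG}(\surjection(\tubing),\surjection(\tubing')) \le d_{\graphG}(\tubing,\tubing')$. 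Finally I would observe that $\surjection$ is \emph{surjective} onto the vertices of $\flipGraph(\bar\graphG)$: indeed, if $\bar\tubing$ is already a maximal proper tubing on $\bar\graphG$, then (using $\building(\bar\graphG) \subseteq \building(\graphG)$ and the compatibility of maximal tubes) it is a proper tubing on $\graphG$ as well, hence extends to some maximal proper tubing $\tubing$ on $\graphG$ with $\surjection(\tubing) = \bar\tubing$ provided the normalization is chosen to fix tubings that are already $\bar\graphG$-maximal. Surjectivity plus the Lipschitz property gives $\diam{\bar\graphG} = \max_{\bar\tubing,\bar\tubing'} d_{\bar\graphG}(\bar\tubing,\bar\tubing') \le \max_{\tubing,\tubing'} d_{\graphG}(\tubing,\tubing') = \diam{\graphG}$, which is the claim.

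The main obstacle is the Lipschitz step, and it splits according to how the flipped tube $\tube \in \tubing$ interacts with $\bar\graphG$. In the spine picture, a flip is a rotation perturbing only a node $v$, its parent $v'$, and their children. If $\tube$ and its replacement $\tube'$ are both tubes of $\bar\graphG$, one hopes $\surjection$ simply performs the corresponding rotation (or nothing). The delicate case is when $\tube$ or $\tube'$ is not a tube of $\bar\graphG$: then the rotation on $\spine$ may or may not change the normalized spine on $\bar\graphG$, and one must argue the change is at most one rotation. This requires a careful case analysis of how the local rotation interacts with the greedy normalization, showing in particular that the normalization is "local" enough that perturbing $\spine$ at $v,v'$ perturbs $\normalization(\spine)$ at a bounded, controllable place. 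I would organize the argument by first establishing a structural description of $\normalization$ (e.g.\ that the descendants set of each node of $\normalization(\spine)$ is a union of descendants sets of $\spine$ refined greedily), and then verifying that the two normalized spines $\normalization(\spine)$ and $\normalization(\spine')$ agree outside the subspine rooted at the relevant node, where they differ by a single rotation. Handling disconnected graphs is routine via the Cartesian product decomposition of flip graphs noted in Section~\ref{sec:preliminaries}, so I would reduce to the connected case at the outset.
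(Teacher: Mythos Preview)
Your high-level strategy---a surjective $1$-Lipschitz map from maximal tubings on~$\graphG$ to maximal tubings on~$\bar\graphG$---matches the paper's, but both the construction of the map and the surjectivity argument have gaps. The paper does not greedily complete $\tubing \cap \building(\bar\graphG)$; it commits to the splitting idea you mention in passing and then abandon. After reducing to $\bar\graphG = \graphG \ssm \{(u,v)\}$ for a single edge, one sets $\surjection(\tube)$ to be the set of connected components of $\bar\graphG[\tube]$ and $\surjection(\tubing) \eqdef \bigcup_{\tube \in \tubing} \surjection(\tube)$. The key lemma is the cardinality inequality $|\surjection(\tubing)| \ge |\tubing|$, so maximal tubings map to maximal tubings automatically, with no completion step; the Lipschitz property is then a one-line consequence of this same inequality applied to $\tubing \cap \tubing'$. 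With a greedy (and not obviously canonical) completion you would instead face exactly the propagation problem you anticipate in your last paragraph. (Incidentally, the paper's $\normalization$ is a different map, used later for the non-leaving-face property, not for this theorem.)

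Your surjectivity argument is also incorrect as stated: a maximal proper tubing on $\bar\graphG$ need not be a tubing on $\graphG$ at all, because disjoint non-adjacent tubes of $\bar\graphG$ can become adjacent once edges are added back. For instance, if $\bar\graphG$ is the disjoint union of the edges $\{1,2\}$ and $\{3,4\}$ and $\graphG$ is the $4$-path obtained by adding the edge $\{2,3\}$, then $\{\{2\},\{3\}\}$ is a maximal proper tubing on $\bar\graphG$ but $\{2\}$ and $\{3\}$ are incompatible in $\graphG$. The paper constructs a preimage of $\bar\tubing$ explicitly, by merging the tubes of $\bar\tubing$ that the new edge renders adjacent; this is where the reduction to deleting a single edge pays off.
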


\begin{remark}
We could prove this statement by a geometric argument, using the construction of the graph associahedron of M.~Carr and S.~Devadoss~\cite{CarrDevadoss}. Indeed, it follows from~\cite{CarrDevadoss} that the graph associahedron~$\Asso(\graphG)$ can be obtained from the graph associahedron~$\Asso(\bar\graphG)$ by successive face truncations. Geometrically, this operation replaces the truncated face~$\face$ by its Cartesian product with a simplex of codimension~$\dim(\face) + 1$. Therefore, a path in the graph of~$\Asso(\graphG)$ naturally projects to a shorter path in the graph of~$\Asso(\bar\graphG)$. Our proof is a purely combinatorial translation of this geometric intuition. It has the advantage not to rely on the results of~\cite{CarrDevadoss} and to help formalizing the argument.
\end{remark}

Observe first that deleting an isolated vertex in~$\graphG$ does not change the nested complex~$\nestedComplex(\graphG)$. We can thus assume that the graphs~$\graphG$ and~$\bar\graphG$ have the same vertex set and that~${\bar\graphG = \graphG \ssm \{(u,v)\}}$ is obtained by deleting a single edge~$(u,v)$ from~$\graphG$. We define below a map~$\surjection$ from tubings on~$\graphG$ to tubings on~$\bar\graphG$ which induces a surjection from the flip graph~$\flipGraph(\graphG)$ onto the flip graph~$\flipGraph(\bar\graphG)$. For consistency, we use~$\tube$ and~$\tubing$ for tubes and tubings of~$\graphG$ and~$\bar\tube$ and~$\bar\tubing$ for tubes and tubings~of~$\bar\graphG$. 

Given a tube~$\tube$ of~$\graphG$ (proper or not), define~$\surjection(\tube)$ to be the coarsest partition of~$\tube$ into tubes of~$\bar\graphG$. In other words, $\surjection(\tube) = \{\tube\}$ if~$(u,v)$ is not an isthmus of~$\graphG{}[\tube]$, and otherwise $\surjection(\tube) = \{\bar\tube_u, \bar\tube_v\}$ where~$\bar\tube_u$ and~$\bar\tube_v$ are the vertex sets of the connected components of~$\bar\graphG{}[\tube]$ containing~$u$ and~$v$ respectively. For a set of tubes~$\tubing$ of~$\graphG$, define~$\surjection(\tubing) \eqdef \bigcup_{\tube \in \tubing} \surjection(\tube)$. See \fref{fig:exmSurjection} for an illustration.

\begin{figure}[hbtp]
 \centerline{\includegraphics[scale=1]{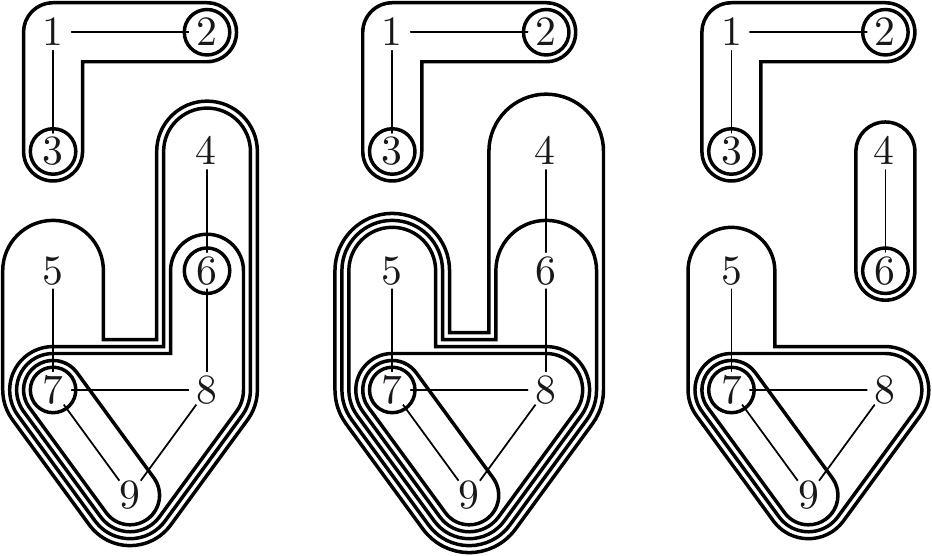}}
 \caption{Two maximal tubings (left and middle) with the same image by the map~$\surjection$ (right). The middle tubing is the preimage of the rightmost tubing obtained by the process decribed in the proof of Corollary~\ref{coro:surjectionFlips} with~$u = 6$ and~$v = 8$.}
  \label{fig:exmSurjection}
\end{figure}

\begin{lemma}
\label{lem:surjectionIncreasing}
For any tubing~$\tubing$ on~$\graphG$, the set~$\surjection(\tubing)$ is a tubing on~$\bar\graphG$ and~${|\tubing| \le |\surjection(\tubing)|}$.
\end{lemma}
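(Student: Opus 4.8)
The plan is to check the two assertions separately, starting with the claim that $\surjection(\tubing)$ is a tubing on $\bar\graphG$. Every element of $\surjection(\tubing)$ is by construction a tube of $\bar\graphG$ (a connected component of $\bar\graphG{}[\tube]$ for some $\tube \in \tubing$), so the only thing to verify is pairwise compatibility. I would take two tubes $\bar\tube \in \surjection(\tube)$ and $\bar\tube' \in \surjection(\tube')$ with $\tube, \tube' \in \tubing$, and distinguish cases according to how $\tube$ and $\tube'$ relate. If $\tube$ and $\tube'$ are disjoint and non-adjacent in $\graphG$, then they are a fortiori disjoint and non-adjacent in $\bar\graphG$ (removing an edge cannot create adjacency), and since $\bar\tube \subseteq \tube$, $\bar\tube' \subseteq \tube'$, the pieces $\bar\tube, \bar\tube'$ are disjoint and non-adjacent in $\bar\graphG$ as well. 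If $\tube$ and $\tube'$ are nested, say $\tube \subseteq \tube'$, then each piece $\bar\tube \in \surjection(\tube)$ is contained in $\tube$, hence in $\tube'$; since $\surjection(\tube')$ is the \emph{coarsest} partition of $\tube'$ into tubes of $\bar\graphG$, the connected set $\bar\tube$ must be contained in one of the blocks of $\surjection(\tube')$ (two distinct blocks of $\surjection(\tube')$ are non-adjacent in $\bar\graphG$, so a connected subset of $\tube'$ cannot meet both), so $\bar\tube$ is nested in some $\bar\tube'$; and two pieces of $\surjection(\tube')$ are themselves compatible, being disjoint and non-adjacent. Combining, any $\bar\tube \in \surjection(\tube)$ and $\bar\tube' \in \surjection(\tube')$ are either nested or disjoint-non-adjacent, which is exactly compatibility.

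For the inequality $|\tubing| \le |\surjection(\tubing)|$, the idea is to produce an injection from $\tubing$ into $\surjection(\tubing)$. The natural choice is to send each $\tube \in \tubing$ to a distinguished piece of $\surjection(\tube)$ — for instance the block $\bar\tube_u$ containing $u$ when $(u,v)$ is an isthmus of $\graphG{}[\tube]$, and $\tube$ itself otherwise (when $v \notin \tube$ or $(u,v)$ is not an isthmus, $\surjection(\tube) = \{\tube\}$, so there is no ambiguity). I would then argue this assignment $\tube \mapsto \phi(\tube)$ is injective: if $\phi(\tube) = \phi(\tube') = \bar\tube$, then $\bar\tube$ determines $\tube$, because $\tube$ is recovered from $\bar\tube$ either as $\bar\tube$ itself or, when $v \notin \bar\tube$ but $u \in \bar\tube$ and $\bar\tube$ is adjacent to $v$ via the edge $(u,v)$, as the union of $\bar\tube$ with the component of $\bar\graphG{}[\tube \ssm \bar\tube_u]$... here one must be slightly careful. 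A cleaner route: observe that for nested $\tube \subsetneq \tube'$ in $\tubing$, the distinguished pieces satisfy $\phi(\tube) \subsetneq \phi(\tube')$ or $\phi(\tube)$ and $\phi(\tube')$ are distinct pieces, and for disjoint-non-adjacent $\tube, \tube'$ the images are disjoint, hence distinct in all cases; thus $\phi$ is injective and $|\tubing| \le |\surjection(\tubing)|$.

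The main obstacle I anticipate is the injectivity argument for the inequality: one has to make sure that two \emph{different} tubes of $\tubing$ never get collapsed to the \emph{same} piece of $\bar\graphG$, and the subtle case is when $\tube \subsetneq \tube'$ are nested but $\surjection$ splits $\tube'$ so that the $u$-piece of $\tube'$ equals all of $\tube$ (or equals the $u$-piece of $\tube$). Ruling this out requires using that $\tube$ itself, being a tube of $\graphG$ containing $u$ and strictly inside $\tube'$, either already avoids $v$ (in which case $\tube = \phi(\tube) \subsetneq \tube'$, and comparing with the $u$-block of $\surjection(\tube')$ uses that the $u$-block of $\surjection(\tube')$ is strictly larger than $\tube$ unless $\tube$ is that block — handled by the nesting structure) or contains $v$, forcing $(u,v)$ to be an isthmus of $\graphG[\tube]$ too, and then the $u$-blocks are properly nested. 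Organizing these cases cleanly — perhaps by showing directly that $\phi$ restricted to any chain of $\tubing$ is strictly monotone and that incomparable tubes have disjoint images — is where the real work lies; everything else is the routine "deleting an edge cannot create adjacency or connectivity" bookkeeping.
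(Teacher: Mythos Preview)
Your argument that $\surjection(\tubing)$ is a tubing on $\bar\graphG$ is fine and in fact more detailed than the paper, which simply declares this ``immediate''.

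The gap is in the inequality. Your proposed injection $\phi(\tube) = \bar\tube_u$ (the $u$-piece of $\surjection(\tube)$) is not injective, and the failure is exactly the case you flag but do not resolve. Take $\graphG$ the path $u - v - w$, $\bar\graphG = \graphG \ssm \{(u,v)\}$, and $\tubing = \big\{\{u\}, \{u,v,w\}\big\}$. Then $\surjection(\{u\}) = \{\{u\}\}$ and $\surjection(\{u,v,w\}) = \{\{u\},\{v,w\}\}$, so your rule gives $\phi(\{u\}) = \{u\} = \phi(\{u,v,w\})$. Switching to the $v$-piece does not help (take the path $w-u-v$ and $\tubing = \{\{v\},\{w,u,v\}\}$), nor does ``largest piece'' (take the path $a-u-v$ and $\tubing = \{\{a,u\},\{a,u,v\}\}$). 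No uniform choice of distinguished piece works; the sentence ``handled by the nesting structure'' is where the argument actually breaks.

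The paper avoids this by induction on $|\tubing|$: pick an inclusion-\emph{maximal} $\tube \in \tubing$ and show $\surjection(\tube) \not\subseteq \surjection(\tubing \ssm \{\tube\})$. If $(u,v)$ is not an isthmus of $\graphG[\tube]$ then $\surjection(\tube) = \{\tube\}$, and any $\tube'$ with $\tube \in \surjection(\tube')$ would satisfy $\tube \subseteq \tube'$, hence $\tube' = \tube$ by maximality. If $(u,v)$ is an isthmus, then $\surjection(\tube) = \{\bar\tube_u,\bar\tube_v\}$; if both came from $\surjection(\tubing \ssm \{\tube\})$, the two witnessing tubes would be nested (since $\bar\tube_u, \bar\tube_v$ are adjacent in $\graphG$), and the larger one would contain $\bar\tube_u \cup \bar\tube_v = \tube$, again forcing it to equal $\tube$. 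Either way $|\surjection(\tubing)| \ge |\surjection(\tubing \ssm \{\tube\})| + 1$, and induction finishes. Your injection idea can be rescued, but only by making the choice of piece depend on the other tubes already processed, which is this same induction in disguise.
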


\begin{proof}
It is immediate to see that~$\surjection$ sends tubings on~$\graphG$ to tubings on~$\bar\graphG$.
We prove by induction on~$|\tubing|$ that~${|\tubing| \le |\surjection(\tubing)|}$. Consider a non-empty tubing~$\tubing$, and let~$\tube$ be an inclusion maximal tube of~$\tubing$. By induction hypothesis, $|\tubing \ssm \{\tube\}| \le |\surjection(\tubing \ssm \{\tube\})|$. We now distinguish two cases:
\begin{enumerate}[(i)]
\item If~$(u,v)$ is an isthmus of~$\graphG{}[\tube]$, then~$\surjection(\tube) = \{\bar\tube_u, \bar\tube_v\} \not\subseteq \surjection(\tubing \ssm \{\tube\})$. Indeed, since~$\bar\tube_u$ and~$\bar\tube_v$ are adjacent in~$\graphG$, two tubes of~$\tubing$ whose images by~$\surjection$ produce~$\bar\tube_u$ and~$\bar\tube_v$ must be nested.~Therefore, one of them contains both~$\bar\tube_u$ and~$\bar\tube_v$, and thus equals~$\tube = \bar\tube_u \cup \bar\tube_v$ by maximality of~$\tube$ in~$\tubing$.
\item If~$(u,v)$ is not an isthmus of~$\graphG{}[\tube]$, then~$\surjection(\tube) = \{\tube\} \not\subseteq \surjection(\tubing \ssm \{\tube\})$. Indeed, if~$\tube' \in \tubing$ is such that~$\tube \in \surjection(\tube')$, then~$\tube \subseteq \tube'$ and thus~$\tube = \tube'$ by maximality of~$\tube$ in~$\tubing$.
\end{enumerate}
We conclude that~$|\surjection(\tubing)| \ge |\surjection(\tubing \ssm \{\tube\})| + 1 \ge |\tubing \ssm \{\tube\}| + 1 = |\tubing|$.
\end{proof}

\begin{corollary}
\label{coro:surjectionFlips}
The map~$\surjection$ induces a graph surjection from the loaded flip graph~$\flipGraph(\graphG)$ onto the loaded flip graph~$\flipGraph(\bar\graphG)$, \ie a surjective map from maximal tubings on~$\graphG$ to maximal tubings on~$\bar\graphG$ such that adjacent tubings on~$\graphG$ are sent to identical or adjacent tubings on~$\bar\graphG$.
\end{corollary}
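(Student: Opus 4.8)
The plan is to show that $\surjection$ is well-defined on maximal tubings (sends maximal tubings to maximal tubings), that it is surjective, and that it sends an edge of $\flipGraph(\graphG)$ to an edge or a vertex of $\flipGraph(\bar\graphG)$.

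First I would handle well-definedness. Let $\tubing$ be a maximal tubing on $\graphG$; then $\tubing$ is loaded and proper, and $\surjection(\tubing)$ is a tubing on $\bar\graphG$ by Lemma~\ref{lem:surjectionIncreasing}. It contains $\building(\bar\graphG)_{\max}$: each connected component $C$ of $\graphG$ is a tube of $\tubing$, and $\surjection(C)$ is exactly the partition of $C$ into connected components of $\bar\graphG$, so $\surjection(\tubing) \supseteq \surjection(\building(\graphG)_{\max}) = \building(\bar\graphG)_{\max}$; hence $\surjection(\tubing)$ is loaded. For maximality it then suffices to count: a maximal tubing on a graph with $n+1$ vertices has exactly $n+1$ tubes when loaded (one per vertex, identifying nodes with their singleton labels in the spine), so $|\tubing| = n+1$; by Lemma~\ref{lem:surjectionIncreasing}, $|\surjection(\tubing)| \ge |\tubing| = n+1$, and since any loaded tubing on $\bar\graphG$ has at most $n+1$ tubes, $\surjection(\tubing)$ is a maximal tubing on $\bar\graphG$. (One should also check $\surjection(\tubing)$ is a \emph{proper} loaded tubing in the sense needed, but $\building(\bar\graphG)_{\max}$ are the only non-proper tubes and the count already forces maximality.)

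Next, surjectivity. Given a maximal tubing $\bar\tubing$ on $\bar\graphG$, I build a preimage. Working in the spine picture, $\bar\tubing$ corresponds to a rooted forest $\bar\spine$ on vertex set $\ground$. The edge $(u,v)$ of $\graphG$ joins $u$ and $v$; let $\bar\tube$ be the smallest tube of $\bar\tubing$ containing both $u$ and $v$ (it exists since $u,v$ lie in the same connected component of $\bar\graphG$ — indeed of $\graphG$ — and the whole component is a tube of $\bar\tubing$). Then in $\bar\spine$ one of $u, v$ is an ancestor of the other, say $u$ is an ancestor of $v$. I claim that adding to $\bar\tubing$ all tubes of the form $\tube_{\snode} \defeq \descendants(\snode, \bar\spine) \cup \{$ the ancestors of $\snode$ up to and including $u \}$... more cleanly: consider the tubes $\bar\tube_w$ of $\bar\tubing$ for $w$ on the path from $v$ up to $u$ in $\bar\spine$, and replace each by $\bar\tube_w \cup \{$descendants of $v$'s side$\}$ — the precise recipe is the one indicated in the caption of \fref{fig:exmSurjection}: reattach the subtree hanging below $v$ so that, using the new edge $(u,v)$, the tubes along the $u$–$v$ path become tubes of $\graphG$ that are no longer tubes of $\bar\graphG$, and $\surjection$ collapses them back. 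Concretely, define $\tubing$ to be the tubing on $\graphG$ obtained from $\bar\tubing$ by, for each $w$ strictly between $v$ and $u$ on the path (and for $u$ itself), replacing $\bar\tube_w$ with $\bar\tube_w \cup \descendants(v,\bar\spine)$; one checks these are tubes of $\graphG$, are pairwise compatible and compatible with the rest of $\bar\tubing$, form a maximal tubing, and that $\surjection$ restores exactly $\bar\tubing$ since each modified tube has $(u,v)$ as an isthmus splitting off $\descendants(v,\bar\spine)$.

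Finally, the flip-compatibility. Let $\tubing \flip \tubing'$ be a flip in $\flipGraph(\graphG)$, with $\tubing' = \tubing \symdif \{\tube, \tube'\}$. Then $\surjection(\tubing)$ and $\surjection(\tubing')$ are maximal tubings on $\bar\graphG$ and, since $\surjection(\tubing) \symdif \surjection(\tubing') \subseteq \surjection(\tube) \cup \surjection(\tube')$, they differ in at most $|\surjection(\tube) \symdif \surjection(\tube')| \le 2$ tubes when $(u,v)$ is an isthmus of at most one of $\graphG[\tube], \graphG[\tube']$; a short case analysis on whether $(u,v)$ is an isthmus of $\graphG[\tube]$ and/or $\graphG[\tube']$ shows $|\surjection(\tubing) \symdif \surjection(\tubing')| \in \{0, 2\}$, so the two images are either equal or adjacent in $\flipGraph(\bar\graphG)$. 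The case $0$ occurs precisely when the flipped tube gets collapsed by $\surjection$, i.e.\ $\tube$ and $\tube'$ both have $(u,v)$ as an isthmus producing the same pieces.

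I expect the main obstacle to be the surjectivity step — pinning down the preimage construction and verifying that the reattached collection is genuinely a tubing (pairwise compatibility, including with unmodified tubes of $\bar\tubing$) and that $\surjection$ returns exactly $\bar\tubing$. The well-definedness is a clean counting argument given Lemma~\ref{lem:surjectionIncreasing}, and the flip-compatibility is a finite case check once one observes $\surjection(\tubing)\symdif\surjection(\tubing')$ is controlled by $\surjection(\tube)\symdif\surjection(\tube')$; but the preimage requires care with the spine surgery.
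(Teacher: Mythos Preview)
Your well-definedness argument is fine and matches what the paper does implicitly. The flip-compatibility can be made to work by your case analysis, though the paper's route is shorter: apply Lemma~\ref{lem:surjectionIncreasing} directly to the intersection~$\tubing \cap \tubing'$ to get $|\surjection(\tubing) \cap \surjection(\tubing')| \ge |\surjection(\tubing \cap \tubing')| \ge |\tubing \cap \tubing'| = |\ground|-1$, which immediately forces the images to be equal or adjacent. No isthmus bookkeeping is needed.

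The surjectivity step, however, has a genuine gap. You assert that in the spine~$\bar\spine$ one of~$u,v$ is an ancestor of the other, but this is false in general: if~$(u,v)$ is a bridge of~$\graphG$ then~$u$ and~$v$ lie in different trees of~$\bar\spine$, and even when they lie in the same tree their least common ancestor may be a third vertex. Worse, in the case you do treat (say~$u$ an ancestor of~$v$), your ``concrete'' recipe is vacuous: every~$\bar\tube_w$ for~$w$ on the path from~$v$ up to~$u$ already contains~$\descendants(v,\bar\spine)$, so the replacement changes nothing and you simply recover~$\bar\tubing$ --- which is indeed already a tubing on~$\graphG$ in that case, but this is the trivial case. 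The non-trivial case, where~$\bar\tubing$ contains tubes with~$u$ but not~$v$ \emph{and} tubes with~$v$ but not~$u$, is precisely the one your construction does not address.

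The paper's fix is short and avoids spines entirely: if~$\bar\tubing_u$ denotes the tubes of~$\bar\tubing$ containing~$u$ but not~$v$, and~$\bar\tube_v$ the maximal tube containing~$v$ but not~$u$, then~$(\bar\tubing \ssm \bar\tubing_u) \cup \set{\bar\tube_u \cup \bar\tube_v}{\bar\tube_u \in \bar\tubing_u}$ is a tubing on~$\graphG$ with image~$\bar\tubing$; one then completes it to a maximal tubing and uses maximality of~$\bar\tubing$ to conclude. This is close in spirit to what you were reaching for (and Remark~\ref{rem:shuffleChains} gives the spine picture), but the asymmetry --- fixing the~$v$-side and enlarging only the~$u$-side tubes --- is the key maneuver your attempt misses.
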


\begin{proof}
Let~$\bar\tubing$ be a tubing on~$\bar\graphG$. If all tubes of~$\bar\tubing$ containing $u$ also contain~$v$ (or the opposite), then~$\bar\tubing$ is a tubing on~$\graphG$ and~$\surjection(\bar\tubing) = \bar\tubing$. Otherwise, let~$\bar\tubing_u$ denote the set of tubes of~$\bar\tubing$ containing~$u$ but not~$v$ and~$\bar\tube_v$ denote the maximal tube containing~$v$ but not~$u$. Then~$(\bar\tubing \ssm \bar\tubing_u) \cup \set{\bar\tube_u \cup \bar\tube_v}{\bar\tube_u \in \bar\tubing_u}$ is a tubing on~$\graphG$ whose image by~$\surjection$ is~$\bar\tubing$. See \fref{fig:exmSurjection} for an illustration. The map~$\surjection$ is thus surjective from tubings on~$\graphG$ to tubings on~$\bar\graphG$. Moreover, any preimage~$\tubing_\circ$ of a maximal tubing~$\bar\tubing$ can be completed into a maximal tubing~$\tubing$ with~$\surjection(\tubing) \supseteq \surjection(\tubing_\circ) = \bar\tubing$, and thus satisfying~$\surjection(\tubing) = \bar\tubing$ by maximality~of~$\bar\tubing$.

Remember that two distinct maximal tubings on~$\graphG$ are adjacent if and only if they share precisely~$|\ground| - 1$ common tubes. Consider two adjacent maximal tubings~$\tubing, \tubing'$ on~$\graphG$, so that ${|\tubing \cap \tubing'| = |\ground| - 1}$. Since~$\surjection(\tubing \cap \tubing') \subseteq \surjection(\tubing) \cap \surjection(\tubing')$ and $|\surjection(\tubing \cap \tubing')| \ge |\tubing \cap \tubing'|$ by Lemma~\ref{lem:surjectionIncreasing}, we have~$|\surjection(\tubing) \cap \surjection(\tubing')| \ge |\tubing \cap \tubing'| = |\ground| - 1$. Therefore, the tubings~$\surjection(\tubing), \surjection(\tubing')$ are adjacent if~$|\surjection(\tubing) \cap \surjection(\tubing')| = |\tubing \cap \tubing'|$ and identical if~${|\surjection(\tubing) \cap \surjection(\tubing')| > |\tubing \cap \tubing'|}$.
\end{proof}

\begin{remark}
\label{rem:shuffleChains}
We can in fact precisely describe the preimage~$\surjection^{-1}(\bar\tubing)$ of a maximal tubing~$\bar\tubing$ on~$\bar\graphG$ as follows. As in the previous proof, let~$\bar\tubing_u$ denote the chain of tubes of~$\bar\tubing$ containing~$u$ but not~$v$ and similarly~$\bar\tubing_v$ denote the chain of tubes of~$\bar\tubing$ containing~$v$ but not~$u$. Any linear extension~$\linearExtensionTubing$ of these two chains defines a preimage of~$\bar\tubing$ where the tubes of~$\bar\tubing_u \cup \bar\tubing_v$ are replaced by the tubes~$\bigcup \set{\tube' \in \linearExtensionTubing}{\tube' \le_{\linearExtensionTubing} \tube}$ for~$\tube \in \linearExtensionTubing$. In terms of spines, this translates to shuffling the two chains corresponding to~$\bar\tubing_u$ and~$\bar\tubing_v$. Details are left to the reader.
\end{remark}

\begin{proof}[Proof of Theorem~\ref{theo:increasingDiameter2}]
Consider two maximal tubings~$\bar\tubing, \bar\tubing'$ on~$\bar\graphG$. Let~$\tubing, \tubing'$ be maximal~loaded tubings on~$\graphG$ such that~$\surjection(\tubing) = \bar\tubing$ and~$\surjection(\tubing') = \bar\tubing'$ (surjectivity of~$\surjection$), and ${\tubing = \tubing_0, \dots, \tubing_\ell = \tubing'}$ be a geodesic between them ($\ell \le \diam{\graphG}$). Deleting repetitions in the sequence~${\bar\tubing = \surjection(\tubing_0), \dots, \surjection(\tubing_\ell) = \bar\tubing'}$ yields a path from~$\bar\tubing$ to~$\bar\tubing'$ (Corollary~\ref{coro:surjectionFlips}) of length at most~$\ell \le \diam{\graphG}$. So~${\diam{\graphG} \ge \diam{\bar\graphG}}$.
\end{proof}


\subsection{Extension to nestohedra}
\label{subsec:nestohedra}

The results of the previous section can be extended to the nested complex on an arbitrary building set. Although the proofs are more abstract and technical, the ideas behind are essentially the same. We recall the definitions of building set and nested complex needed here and refer to~\cite{CarrDevadoss, Postnikov, FeichtnerSturmfels, Zelevinsky} for more details and motivation.

A \defn{building set} on a ground set~$\ground$ is a collection~$\building$ of non-empty subsets of~$\ground$~such~that
\begin{enumerate}[(B1)]
\item if~$\block, \block' \in \building$ and~$\block \cap \block' \ne \varnothing$, then~$\block \cup \block' \in \building$, and
\item $\building$ contains all singletons~$\{v\}$ for~$v \in \ground$.
\end{enumerate} 
We denote by~$\building_{\max}$ the set of inclusion maximal elements of~$\building$ and call \defn{proper} the elements of~$\building \ssm \building_{\max}$. The building set is \defn{connected} if~$\building_{\max} = \{\ground\}$.  Graphical building sets are particular examples, and connected graphical building sets correspond to connected graphs.

A \defn{$\building$-nested set} on~$\building$ is a subset~$\nested$ of~$\building$ such that
\begin{enumerate}[(N1)]
\item for any~$\nestelem, \nestelem' \in \nested$, either~$\nestelem \subseteq \nestelem'$ or~$\nestelem' \subseteq \nestelem$ or~$\nestelem \cap \nestelem' = \varnothing$, and
\item for any~$k \ge 2$ pairwise disjoint sets~$\nestelem_1, \dots, \nestelem_k \in \nested$, the union~$\nestelem_1 \cup \dots \cup \nestelem_k$ is not in~$\building$.
\end{enumerate}
As before, a $\building$-nested set~$\nested$ is \defn{proper} if~$\nested \cap \building_{\max} = \varnothing$ and \defn{loaded} if~$\building_{\max} \subseteq \nested$. The \defn{$\building$-nested complex} is the $(|\ground| - |\building_{\max}|)$-dimensional simplicial complex~$\nestedComplex(\building)$ of all proper nested sets on~$\building$. As in the graphical case, the $\building$-nested complex can be realized geometrically as the boundary complex of the polar of the \defn{nestohedron}~$\Nest(\building)$, constructed \eg in~\cite{Postnikov, FeichtnerSturmfels, Zelevinsky}. We denote by~$\diam{\building}$ the diameter of the graph~$\flipGraph(\building)$ of~$\Nest(\building)$. As in the previous section, it is more convenient to regard the vertices of~$\flipGraph(\building)$ as maximal loaded nested sets.

The \defn{spine} of a nested set~$\nested$ is the Hasse diagram of the inclusion poset of~$\nested \cup \building_{\max}$. Spines are called $\building$-forests in~\cite{Postnikov}. The definitions and properties of Section~\ref{subsec:spines} extend to general building sets, see~\cite{Postnikov} for details.

We shall now prove the following generalization of Theorem~\ref{theo:increasingDiameter2}.

\begin{theorem}
\label{theo:increasingDiam3}
$\diam{\bar\building} \le \diam{\building}$ for any two building sets~$\building, \bar\building$ on~$\ground$ such that~$\bar\building \subseteq \building$.
\end{theorem}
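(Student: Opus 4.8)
The plan is to mimic the proof of Theorem~\ref{theo:increasingDiameter2}, replacing ``delete an edge of~$\graphG$'' by ``delete an element of~$\building$''.

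\emph{Reduction to a single element.} Given building sets $\bar\building \subsetneq \building$ on~$\ground$, I would first pick $\block_0$ inclusion-minimal in $\building \ssm \bar\building$. Then $\block_0$ is not a singleton, and axiom~(B1) for~$\bar\building$ prevents $\block_0$ from being the union of two strictly smaller intersecting elements of~$\building$; hence $\building \ssm \{\block_0\}$ is again a building set, still containing~$\bar\building$. Iterating this and chaining the resulting inequalities, it suffices to treat the case $\building = \bar\building \cup \{\block_0\}$ with $\block_0 \notin \bar\building$.

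\emph{The normalization map.} For $\block \in \building$, let $\surjection(\block)$ be the set of inclusion-maximal elements of~$\bar\building$ contained in~$\block$; axioms~(B1) and~(B2) make it a partition of~$\block$, with $\surjection(\block) = \{\block\}$ for $\block \in \bar\building$ and $\surjection(\block_0) = \{P_1, \dots, P_r\}$, $r \ge 2$. For a nested set~$\nested$ on~$\building$, put $\surjection(\nested) \eqdef \bigcup_{\nestelem \in \nested} \surjection(\nestelem)$, so that $\surjection$ fixes every nested set avoiding~$\block_0$ and replaces $\block_0$ by $P_1, \dots, P_r$ in the others. I would then prove, in this order, the analogues of Lemma~\ref{lem:surjectionIncreasing} and Corollary~\ref{coro:surjectionFlips}:
\begin{enumerate}[(a)]
\item \emph{$\surjection(\nested)$ is a nested set on~$\bar\building$, loaded if~$\nested$ is.} Axiom~(N1) is a short case check. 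For~(N2), if pairwise disjoint $\nestelem'_1, \dots, \nestelem'_k \in \surjection(\nested)$ ($k \ge 2$) had union $B \in \bar\building$: if all $\nestelem'_i$ already lie in~$\nested$ this contradicts~(N2) for~$\nested$; otherwise some $\nestelem'_i$ is one of the $P_j \subseteq \block_0$, so $B \cap \block_0 \ne \varnothing$ and~(B1) gives $B \cup \block_0 \in \building$, and then either every $\nestelem'_i \subseteq \block_0$ --- whence maximality of~$P_j$ in~$\block_0$ forces $B = \nestelem'_i$, impossible for $k \ge 2$ --- or some $\nestelem'_i \not\subseteq \block_0$, which by~(N1) for~$\nested$ is then disjoint from~$\block_0$, so that $\block_0$ together with those $\nestelem'_i$ contradicts~(N2) for~$\nested$.
\item \emph{$|\nested| \le |\surjection(\nested)|$ for every nested set~$\nested$ on~$\building$}, by induction on~$|\nested|$ after removing an inclusion-maximal $\nestelem \in \nested$: if $\nestelem \ne \block_0$ then $\surjection(\nestelem) = \{\nestelem\} \not\subseteq \surjection(\nested \ssm \{\nestelem\})$ by maximality of~$\nestelem$; if $\nestelem = \block_0$ then $P_1, \dots, P_r$ cannot all lie in $\surjection(\nested \ssm \{\block_0\}) = \nested \ssm \{\block_0\}$, since $\{P_1, \dots, P_r\} \subseteq \nested$ would violate~(N2) for~$\nested$.
\item \emph{$\surjection$ induces a graph surjection from the loaded flip graph~$\flipGraph(\building)$ onto~$\flipGraph(\bar\building)$.} Given a maximal loaded nested set~$\bar\nested$ on~$\bar\building$: either $\bar\nested$ is already a nested set on~$\building$, hence a maximal loaded one mapping to itself; or~(N2) fails over~$\building$ only through pairwise disjoint elements of~$\bar\nested$ summing to~$\block_0$, which by~(N2) for~$\bar\nested$ forces these to be exactly $P_1, \dots, P_r$, and then $(\bar\nested \ssm \{P_1\}) \cup \{\block_0\}$ is a maximal loaded nested set on~$\building$ with image~$\bar\nested$. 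The ``adjacent to identical-or-adjacent'' part follows from~(b) exactly as in Corollary~\ref{coro:surjectionFlips}: two adjacent maximal loaded nested sets on~$\building$ share $|\ground| - 1$ elements, hence so do their images.
\end{enumerate}
Granting this, the bound $\diam{\bar\building} \le \diam{\building}$ follows verbatim as in the proof of Theorem~\ref{theo:increasingDiameter2}: lift $\bar\nested, \bar\nested'$ to maximal loaded nested sets on~$\building$, take a geodesic of length $\le \diam{\building}$ between them, push it forward through~$\surjection$, and delete repetitions.

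The hard part will be~(c), and to a lesser degree~(a): in the graphical setting an edge either is or is not an isthmus of a prescribed induced subgraph, a dichotomy that makes Corollary~\ref{coro:surjectionFlips} and Remark~\ref{rem:shuffleChains} transparent. Here I must instead understand how~$\block_0$ decomposes into its maximal $\bar\building$-subsets relative to an arbitrary maximal nested set of~$\bar\building$ --- in particular ruling out that an element of~$\bar\nested$ ``straddles''~$\block_0$ --- and this rests on combining maximality of the nested set with axiom~(B1), rather than on a purely local graph argument.
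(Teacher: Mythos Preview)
Your proposal is correct and follows the same overall strategy as the paper: define the map~$\surjection$ sending each~$\block \in \building$ to the set of maximal elements of~$\bar\building$ contained in~$\block$, prove that~$\surjection$ sends nested sets to nested sets without decreasing cardinality, deduce that it is a graph surjection between the loaded flip graphs, and conclude exactly as in Theorem~\ref{theo:increasingDiameter2}. The one structural difference is your preliminary reduction to the case~$\building = \bar\building \cup \{\block_0\}$. The paper does \emph{not} make this reduction: it works directly with an arbitrary inclusion~$\bar\building \subseteq \building$, so that~$\surjection(\nestelem)$ may be a genuinely complicated partition for many elements~$\nestelem$, and the verifications of~(N1),~(N2) and of the cardinality bound in the analogue of Lemma~\ref{lem:surjectionIncreasing} require a slightly more abstract argument (choosing an inclusion-maximal~$\nestelem_\circ \in \nested$ and repeatedly invoking~(B1) to show that some~$\nestelem_i$ dominates the others). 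Your reduction buys you that~$\surjection(\nestelem) = \{\nestelem\}$ for all~$\nestelem \ne \block_0$, which makes each of~(a),~(b),~(c) a short explicit case split; the price is the extra step of checking that~$\building \ssm \{\block_0\}$ is again a building set, which you handle correctly via the minimality of~$\block_0$ in~$\building \ssm \bar\building$.
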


The proof follows the same line as that of Theorem~\ref{theo:increasingDiameter2}. We first define a map~$\surjection$ which transforms elements of~$\building$ to subsets of~$\bar\building$ as follows: for $\block \in \building$ (proper or not), define~$\surjection(\block)$ as the coarsest partition of~$\block$ into elements of~$\bar\building$. Observe that~$\surjection(\block)$ is well-defined since~$\bar\building$ is a building set, and that the elements of~$\surjection(\block)$ are precisely the inclusion maximal elements of~$\bar\building$ contained in~$\block$. For a nested set~$\nested$ on~$\building$, we define~$\surjection(\nested) \eqdef \bigcup_{\nestelem \in \nested} \surjection(\nestelem)$. The following statement is similar to Lemma~\ref{lem:surjectionIncreasing}.

\begin{lemma}
For any nested set~$\nested$ on~$\building$, the image~$\surjection(\nested)$ is a nested set on~$\bar\building$ and~$|\nested| \le |\surjection(\nested)|$.
\end{lemma}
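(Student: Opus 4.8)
The plan is to follow the proof of Lemma~\ref{lem:surjectionIncreasing} step by step, establishing separately that~$\surjection(\nested)$ satisfies the two nested set axioms~(N1) and~(N2), and then the inequality~$|\nested| \le |\surjection(\nested)|$ by induction on~$|\nested|$. The recurring tool is the observation made just before the statement: for every~$\block \in \building$ the image~$\surjection(\block)$ is a \emph{partition} of~$\block$, its parts being precisely the inclusion maximal elements of~$\bar\building$ contained in~$\block$.

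Verifying~(N1) should be routine: given~$\bar\block_i \in \surjection(\nestelem_i)$ with~$\nestelem_i \in \nested$ for~$i \in \{1,2\}$, either~$\nestelem_1 \cap \nestelem_2 = \varnothing$, in which case~$\bar\block_1 \cap \bar\block_2 = \varnothing$, or, say, $\nestelem_1 \subseteq \nestelem_2$ by~(N1) for~$\nested$, and then~$\bar\block_1$ is an element of~$\bar\building$ inside~$\nestelem_2$, hence lies in one of the parts of~$\surjection(\nestelem_2)$, which is either~$\bar\block_2$ or disjoint from~$\bar\block_2$. The substance of the argument is axiom~(N2). Assume~$\bar\block_1, \dots, \bar\block_k \in \surjection(\nested)$ with~$k \ge 2$ are pairwise disjoint, choose~$\nestelem_i \in \nested$ with~$\bar\block_i \in \surjection(\nestelem_i)$, and suppose for contradiction that~$\bar\block \eqdef \bigcup_i \bar\block_i$ belongs to~$\bar\building$. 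I would let~$\nestelem^{(1)}, \dots, \nestelem^{(m)}$ be the inclusion maximal members of~$\{\nestelem_1, \dots, \nestelem_k\}$; by~(N1) for~$\nested$ these are pairwise disjoint, every~$\nestelem_i$ sits inside exactly one of them, and each~$\nestelem^{(j)}$ coincides with some~$\nestelem_i$. If~$m = 1$, all the~$\nestelem_i$ lie in a single maximal element~$\nestelem^\star = \nestelem_{i_0}$; then~$\bar\block \in \bar\building$ is contained in~$\nestelem^\star$ and strictly contains the part~$\bar\block_{i_0}$ of~$\surjection(\nestelem^\star)$ (strictly because~$k \ge 2$), contradicting maximality of~$\bar\block_{i_0}$. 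If~$m \ge 2$, I would observe that~$\bar\block$ meets every~$\nestelem^{(j)}$ and is contained in~$\bigcup_j \nestelem^{(j)}$, and then absorb the~$\nestelem^{(j)}$ into~$\bar\block$ one by one via axiom~(B1) in~$\building$, concluding that~$\bigcup_j \nestelem^{(j)} = \bar\block \cup \bigcup_j \nestelem^{(j)} \in \building$; since~$\nestelem^{(1)}, \dots, \nestelem^{(m)}$ are~$m \ge 2$ pairwise disjoint members of~$\nested$, this contradicts~(N2) for~$\nested$.

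For the inequality I would induct on~$|\nested|$, the case~$\nested = \varnothing$ being trivial, and pick an inclusion maximal~$\nestelem \in \nested$. As in Lemma~\ref{lem:surjectionIncreasing} it suffices to produce a part of~$\surjection(\nestelem)$ that does not occur in~$\surjection(\nested \ssm \{\nestelem\})$, since then~$|\surjection(\nested)| \ge |\surjection(\nested \ssm \{\nestelem\})| + 1 \ge |\nested|$ by the induction hypothesis. If no such part existed, every part~$\bar\block$ of~$\surjection(\nestelem)$ would belong to~$\surjection(\nestelem_{\bar\block})$ for some~$\nestelem_{\bar\block} \in \nested \ssm \{\nestelem\}$; since~$\varnothing \ne \bar\block \subseteq \nestelem \cap \nestelem_{\bar\block}$ and~$\nestelem$ is maximal, (N1) would force~$\nestelem_{\bar\block} \subsetneq \nestelem$, and because the parts~$\bar\block$ cover~$\nestelem$ we would get~$\bigcup_{\bar\block} \nestelem_{\bar\block} = \nestelem$. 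Taking the inclusion maximal elements among the~$\nestelem_{\bar\block}$, they are pairwise disjoint with union~$\nestelem \in \building$: if there are at least two of them this contradicts~(N2) for~$\nested$, and if there is only one, say~$\nestelem'$, then~$\nestelem \subseteq \nestelem' \subsetneq \nestelem$, absurd.

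The only genuinely delicate point is axiom~(N2) for~$\surjection(\nested)$, and within it the case~$m \ge 2$: the mechanism is that a hypothetical~(N2)-violation created downstairs by~$\surjection$ can be pulled back, through iterated use of~(B1), to an honest~(N2)-violation among the inclusion maximal elements of~$\nested$ that feed into it. The rest is a faithful transcription of the graphical proof, the role of ``isthmus'' being played by the partition structure of~$\surjection$.
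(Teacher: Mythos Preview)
Your proof is correct and follows essentially the same approach as the paper's. The only cosmetic differences are that you make the passage to the inclusion maximal elements $\nestelem^{(1)},\dots,\nestelem^{(m)}$ explicit where the paper simply says ``merging intersecting elements'', and in the inequality step you phrase the goal as producing a part of $\surjection(\nestelem)$ absent from $\surjection(\nested\ssm\{\nestelem\})$ rather than the paper's contrapositive formulation; both are the same argument, and yours is arguably a touch cleaner.
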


\begin{proof}
Consider a nested set~$\nested$ on~$\building$. To prove that~$\surjection(\nested)$ is a nested set on~$\bar\building$, we start with condition~(N1). Let~$\bar\nestelem, \bar\nestelem' \in \surjection(\nested)$ and let~$\nestelem, \nestelem' \in \nested$ such that~$\bar\nestelem \in \surjection(\nestelem)$ and~$\bar\nestelem' \in \surjection(\nestelem')$. Since~$\nested$ is nested, we can distinguish two cases:
\begin{itemize}
\item Assume that~$\nestelem$ and~$\nestelem'$ are disjoint. Then~$\bar\nestelem \cap \bar\nestelem' = \varnothing$ since~$\bar\nestelem \subseteq \nestelem$ and~$\bar\nestelem' \subseteq \nestelem'$.
\item Assume that~$\nestelem$ and~$\nestelem'$ are nested, \eg $\nestelem \subseteq \nestelem'$. If~$\bar\nestelem \cap \bar\nestelem' \ne \varnothing$, then~$\bar\nestelem \cup \bar\nestelem'$ is in~$\bar\building$ and is a subset of~$\nestelem'$. By maximality of~$\bar\nestelem'$ in~$\nestelem'$, we obtain~$\bar\nestelem \cup \bar\nestelem' = \bar\nestelem'$, and thus~$\bar\nestelem \subseteq \bar\nestelem'$.
\end{itemize}
To prove Condition~(N2), consider pairwise disjoint elements~$\bar\nestelem_1, \dots, \bar\nestelem_k \in \surjection(\nested)$ and~${\nestelem_1, \dots, \nestelem_k \in \nested}$ such that~$\bar\nestelem_i \in \surjection(\nestelem_i)$. We assume by contradiction that~$\bar\nestelem \eqdef \bar\nestelem_1 \cup \dots \cup \bar\nestelem_k \in \bar\building$ and we prove that~$\nestelem \eqdef \nestelem_1 \cup \dots \cup \nestelem_k \in \building$. Indeed, $\bar\nestelem, \nestelem_1, \dots, \nestelem_k$ all belong to~$\building$ and~$\bar\nestelem \cap \nestelem_i \ne \varnothing$ (it contains~$\bar\nestelem_i$) so that~$\bar\nestelem \cup \nestelem$ also belongs to~$\building$ by multiple applications of Property~(B1) of building sets. Moreover,~$\bar\nestelem \subseteq \nestelem$ so that~$\nestelem = \bar\nestelem \cup \nestelem \in \building$. Finally, we conclude distinguishing two cases:
\begin{itemize}
\item If there is~$i \in [k]$ such that~$\nestelem_i$ contains all~$\nestelem_j$, then~$\nestelem_i$ contains all~$\bar\nestelem_j$ and thus~$\bar\nestelem$. This contradicts the maximality of~$\bar\nestelem_i$ in~$\nestelem_i$ since~$\bar\nestelem_i \subsetneq \bar\nestelem \in \bar\building$.
\item Otherwise, merging intersecting elements allows us to assume that~$\nestelem_1, \dots, \nestelem_k$ are pairwise disjoint and~$\nestelem \eqdef \nestelem_1 \cup \dots \cup \nestelem_k \in \building$ contradicts Condition~(N2) for~$\nested$. 
\end{itemize}
This concludes the proof that~$\surjection(\nested)$ is a nested set on~$\bar\building$.

We now prove that~$|\nested| \le |\surjection(\nested)|$ by induction on~$|\nested|$. Consider a non-empty nested set~$\nested$ and let~$\nestelem_\circ$ be an inclusion maximal element of~$\nested$. By induction hypothesis, $|\nested \ssm \{\nestelem_\circ\}| \le |\surjection(\nested \ssm \{\nestelem_\circ\})|$. Let~$\surjection(\nestelem_\circ) = \{\bar\nestelem_1, \dots, \bar\nestelem_k\}$. Consider~$\nestelem_1, \dots, \nestelem_k \in \nested$ such that~$\bar\nestelem_i \in \surjection(\nestelem_i)$, and let~${\nestelem \eqdef \nestelem_1 \cup \dots \cup \nestelem_k}$. Since~$\nestelem_\circ, \nestelem_1, \dots, \nestelem_k$ all belong to~$\building$ and~$\nestelem_\circ \cap \nestelem_i \ne \varnothing$ (it contains~$\bar\nestelem_i$), we have~$\nestelem_\circ \cup \nestelem \in \building$ by multiple applications of Property~(B1) of building sets. Moreover,~$\nestelem_\circ \subseteq \nestelem$ so that~$\nestelem = \nestelem_\circ \cup \nestelem \in \building$. It follows by Condition~(N2) on~$\nested$ that there is~$i \in [k]$ such that~$\nestelem_i$ contains all~$\nestelem_j$, and thus~$\nestelem_\circ \subseteq \nestelem_i$. We obtain that~$\nestelem_\circ = \nestelem_i$ by maximality of~$\nestelem_\circ$. We conclude that~$\nestelem_\circ$ is the only element of~$\nested$ such that~$\bar\nestelem_i \in \surjection(\nestelem_\circ)$, so that~$|\surjection(\nested)| \ge |\surjection(\nested \ssm \{\nestelem_\circ\})| + 1 \ge |\nested \ssm \{\nestelem_\circ\}| + 1 = |\nested|$.
\end{proof}

\begin{corollary}
\label{coro:surjectionNested}
The map~$\surjection$ induces a graph surjection from the loaded flip graph~$\flipGraph(\building)$ onto the loaded flip graph~$\flipGraph(\bar\building)$, \ie a surjective map from maximal nested sets on~$\building$ to maximal nested sets on~$\bar\building$ such that adjacent nested sets on~$\building$ are sent to identical or adjacent nested sets on~$\bar\building$.
\end{corollary}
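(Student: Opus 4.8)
The plan is to transpose the proof of Corollary~\ref{coro:surjectionFlips} to this more abstract setting. Two things must be checked: first, that~$\surjection$ induces a \emph{surjective} map from maximal loaded nested sets on~$\building$ to maximal loaded nested sets on~$\bar\building$; second, that adjacent maximal loaded nested sets on~$\building$ are sent to identical or adjacent ones on~$\bar\building$. The lemma just established (the analogue of Lemma~\ref{lem:surjectionIncreasing}) provides all the quantitative input, so the second point reduces to counting, and the only place where the general framework costs extra work is the construction of preimages.

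For surjectivity, I would first note that~$\surjection$ sends loaded nested sets to loaded nested sets, since every element of~$\bar\building_{\max}$ is contained in some element of~$\building_{\max}$ and hence appears in~$\surjection(\building_{\max})$. Moreover~$\surjection$ sends maximal loaded nested sets on~$\building$ to maximal loaded nested sets on~$\bar\building$: such a set has exactly~$|\ground|$ elements, so its image has at least~$|\ground|$ elements by the lemma, hence exactly~$|\ground|$ since any nested set on~$\bar\building$ has at most~$|\ground|$ elements, hence is maximal. It then remains to produce, for each maximal loaded nested set~$\bar\nested$ on~$\bar\building$, a preimage. As in Corollary~\ref{coro:surjectionFlips}, it suffices to build a preimage~$\nested_\circ$ of an \emph{arbitrary} nested set~$\bar\nested$ on~$\bar\building$ and then complete~$\nested_\circ$ into a maximal loaded nested set~$\nested$ on~$\building$: since $\surjection(\nested) \supseteq \surjection(\nested_\circ) = \bar\nested$ and $\surjection(\nested)$ is itself a nested set on~$\bar\building$, the maximality of~$\bar\nested$ forces $\surjection(\nested) = \bar\nested$. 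To build~$\nested_\circ$, I would proceed top-down. Let~$\bar\nestelem_1, \dots, \bar\nestelem_m$ be the inclusion maximal elements of~$\bar\nested$, pairwise disjoint by~(N1). By iterated use of~(B1), the family of index sets~$S \subseteq \{1, \dots, m\}$ with $\bigcup_{i \in S} \bar\nestelem_i \in \building$ is closed under union of intersecting sets, so~$\{1, \dots, m\}$ admits a coarsest partition~$S_1, \dots, S_p$ with $\block_j \eqdef \bigcup_{i \in S_j} \bar\nestelem_i \in \building$; we put the~$\block_j$ into~$\nested_\circ$, and recurse inside each~$\bar\nestelem_i$ on the nested set~$\set{\bar\nestelem \in \bar\nested}{\bar\nestelem \subsetneq \bar\nestelem_i}$, with~$\building$ and~$\bar\building$ restricted to~$\bar\nestelem_i$. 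Two things then need verification: that~$\nested_\circ \subseteq \building$ is a nested set on~$\building$ (conditions~(N1) and~(N2) following from the pairwise disjointness of the~$\block_j$, the maximality of the partition, and the induction hypothesis), and, more importantly, that $\surjection(\nested_\circ) = \bar\nested$. For the latter, the key point is that the inclusion maximal elements of~$\bar\building$ contained in~$\block_j$ are exactly the~$\bar\nestelem_i$ with~$i \in S_j$: any~$\bar\nestelem \in \bar\building$ with~$\bar\nestelem \subseteq \block_j$ that is not contained in a single~$\bar\nestelem_i$ must meet at least two of them, and iterating~(B1) then shows that the union of the~$\bar\nestelem_i$ it meets lies in~$\bar\building$, contradicting~(N2) for~$\bar\nested$.

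For the adjacency statement, recall that~$\nestedComplex(\building)$ and~$\nestedComplex(\bar\building)$ are $(|\ground| - |\building_{\max}|)$- and $(|\ground| - |\bar\building_{\max}|)$-dimensional respectively, so maximal loaded nested sets on either side have exactly~$|\ground|$ elements, and two of them are adjacent if and only if they share~$|\ground| - 1$ elements. Given adjacent maximal loaded nested sets~$\nested, \nested'$ on~$\building$, we have $\surjection(\nested \cap \nested') \subseteq \surjection(\nested) \cap \surjection(\nested')$, while the lemma gives $|\surjection(\nested \cap \nested')| \ge |\nested \cap \nested'| = |\ground| - 1$ and $|\surjection(\nested)| = |\surjection(\nested')| = |\ground|$ by the first part. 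Hence $\surjection(\nested)$ and~$\surjection(\nested')$ are maximal loaded nested sets on~$\bar\building$ sharing at least~$|\ground| - 1$ elements, so they are equal (if they share~$|\ground|$ elements) or adjacent (if they share exactly~$|\ground| - 1$).

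The step I expect to be the main obstacle is the preimage construction, specifically the verification that $\surjection(\nested_\circ) = \bar\nested$: this is the point where the concrete ``merge the $u$-chain into the $v$-chain'' device used in Corollary~\ref{coro:surjectionFlips} and Remark~\ref{rem:shuffleChains} has to be replaced by a purely set-theoretic argument combining~(B1) and~(N2). Once this is done, everything else is a routine transcription of the graphical proofs.
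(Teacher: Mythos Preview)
Your treatment of the adjacency half is fine and matches the paper's (which simply says ``identical to that of Corollary~\ref{coro:surjectionFlips}''). The surjectivity half, however, has a genuine gap.

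Your top-down recursion produces, at the top level, the blocks~$\block_j$ and then recurses \emph{inside each~$\bar\nestelem_i$} with the restricted building set~$\building|_{\bar\nestelem_i}$. The induction hypothesis then only guarantees Condition~(N2) for the restricted building sets, i.e.\ for unions lying inside a single~$\bar\nestelem_i$. But~(N2) for~$\nested_\circ$ in the full building set~$\building$ can fail across different~$\bar\nestelem_i$'s sitting in the same block~$\block_j$. Concretely, take~$\ground = \{1,2,3,4\}$, let~$\building$ be the graphical building set of the path~$1\text{--}2\text{--}3\text{--}4$, and let
\[
\bar\building = \big\{\{1\},\{2\},\{3\},\{4\},\{1,2\},\{3,4\}\big\}, \qquad
\bar\nested = \big\{\{2\},\{3\},\{1,2\},\{3,4\}\big\}.
\]
Your construction gives~$\block_1 = \{1,2,3,4\}$ at the top, then recursing inside~$\{1,2\}$ yields~$\{2\}$ and inside~$\{3,4\}$ yields~$\{3\}$, so~$\nested_\circ = \big\{\{1,2,3,4\},\{2\},\{3\}\big\}$. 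But~$\{2\} \cup \{3\} = \{2,3\} \in \building$, so~$\nested_\circ$ violates~(N2) on~$\building$ and is not a nested set.

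The paper avoids this by an entirely different device: it starts from~$\bar\nested$ itself (which already satisfies~(N1) in~$\building$) and iteratively repairs violations of~(N2), each time replacing one element~$\bar\nestelem_1$ of a maximal violating family~$\bar\nestelem_1,\dots,\bar\nestelem_k$ by the union~$\nestelem = \bar\nestelem_1 \cup \dots \cup \bar\nestelem_k \in \building$, and checking that this preserves~(N1), preserves~$\surjection(\cdot) = \bar\nested$, and strictly decreases the number of violations. In the example above this produces~$\big\{\{1,2,3,4\},\{2,3,4\},\{3,4\},\{3\}\big\}$ (or a symmetric variant), which is indeed a nested set on~$\building$ with the correct image. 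Your top-down scheme is not obviously repairable: recursing inside the~$\block_j$ rather than the~$\bar\nestelem_i$ runs into an infinite loop, and recursing inside the~$\bar\nestelem_i$ loses exactly the cross-branch information needed for~(N2).
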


\begin{proof}
To prove the surjectivity, consider a nested set~$\bar\nested$ on~$\bar\building$. The elements of~$\bar\nested$ all belong to~$\building$ and satisfy Condition~(N1) for nested sets. It remains to transform the elements in~$\bar\nested$ which violate Condition~(N2). If there is no such violation, then~$\bar\nested$ is a nested set on~$\building$ and~$\surjection(\bar\nested) = \bar\nested$. Otherwise, consider pairwise disjoint elements~$\bar\nestelem_1, \dots, \bar\nestelem_k$ of~$\bar\nested$ such that~$\nestelem \eqdef \bar\nestelem_1 \cup \dots \cup \bar\nestelem_k$ is in~$\building$ and is maximal for this property. Consider the subset~$\bar\nested' \eqdef \big( \bar\nested \ssm \{\bar\nestelem_1\} \big) \cup \{\nestelem\}$ of~$\building$. Observe that:
\begin{itemize}
\item $\bar\nested'$ still satisfies  Condition~(N1). Indeed, if~$\bar\nestelem \in \bar\nested$ is such that~$\nestelem \cap \bar\nestelem \ne \varnothing$, then~$\bar\nestelem$ intersects at least one element~$\bar\nestelem_i$. Since~$\bar\nested$ is nested, $\bar\nestelem \subseteq \bar\nestelem_i$ or~$\bar\nestelem_i \subseteq \bar\nestelem$. In the former case, $\bar\nestelem \subseteq \nestelem$ and we are done. In the latter case, $\bar\nestelem$ and the elements~$\bar\nestelem_j$ disjoint from~$\bar\nestelem$ would contradict the maximality of~$\nestelem$.
\item $\bar\nested'$ still satisfies $\surjection(\bar\nested') = \bar\nested$. Indeed, $\bar\nestelem_1 \in \surjection(\nestelem)$ since~$\surjection(\nestelem) = \{\bar\nestelem_1, \dots, \bar\nestelem_k\}$. For the latter equality, observe that $\{\bar\nestelem_1, \dots, \bar\nestelem_k\}$ is a partition of~$\nestelem$ into elements of~$\bar\building$ and that a coarser partition would contradict Condition~(N2) on~$\bar\nested$.
\item $\nestelem$ cannot  be partitioned into two or more elements of~$\bar\nested'$. Such a partition would refine the partition~$\surjection(\nestelem)$, and would thus contradict again Condition~(N2) on~$\bar\nested$. Therefore, $\bar\nested'$ has strictly less violations of Condition~(N2) than~$\bar\nested$.
\item All violations of Condition~(N2) in~$\bar\nested'$ only involve elements of~$\bar\building$. Indeed, pairwise disjoint elements~$\bar\nestelem'_1, \dots, \bar\nestelem'_\ell \in \bar\nested'$ disjoint from~$\nestelem$ and such that~$\nestelem \cup \bar\nestelem'_1 \cup \dots \cup \bar\nestelem'_\ell \in \building$ would contradict the maximality of~$\nestelem$.
\end{itemize}
These four points enable us to decrease the number of violations of Condition~(N2) until we reach a nested set~$\nested$ on~$\building$ which still satisfies~$\surjection(\nested) = \bar\nested$.

The second part of the proof is identical to that of Corollary~\ref{coro:surjectionFlips}.
\end{proof}

From Corollary~\ref{coro:surjectionNested}, the proof of Theorem~\ref{theo:increasingDiam3} is identical to that of Theorem~\ref{theo:increasingDiameter2}.


\subsection{Geodesic properties}

\newcommand{\EFP}{\textsc{efp}}
\newcommand{\SNLFP}{\textsc{snlfp}}
\newcommand{\NLFP}{\textsc{nlfp}}

In this section, we focus on properties of the geodesics in the graphs of nestohedra. We consider three properties for a face~$\face$ of a polytope~$\polytope$:
\begin{description}
\item[NLFP] $\face$ has the \defn{non-leaving-face property} in~$\polytope$ if~$\face$ contains all geodesics connecting two vertices of~$\face$ in the graph of~$\polytope$.
\item[SNLFP] $\face$ has the \defn{strong non-leaving-face property} in~$\polytope$ if any path connecting two vertices~$v,w$ of~$\face$ in the graph of~$\polytope$ and leaving the face~$\face$ has at least two more steps than a geodesic between~$v$ and~$w$.
\item[EFP] $\face$ has the \defn{entering-face property} in~$\polytope$ if for any vertices~$u,v,w$ of~$\polytope$ such that~$u \notin \face$, $v,w \in \face$, and~$u$ and~$v$ are neighbors in the graph of~$\polytope$, there exists a geodesic connecting~$u$ and~$w$ whose first edge is the edge from~$u$ to~$v$.
\end{description}

For a face~$\face$ of a polytope~$\polytope$, we have~$\EFP \iff \SNLFP \implies \NLFP$. However, the reverse of the last implication is wrong: all faces of a simplex have the \NLFP{} (all vertices are at distance~$1$), but not the \SNLFP. Alternative counter-examples with no simplicial face already exist in dimension~$3$. Among classical polytopes the $n$-dimensional cube, permutahedron, associahedron, and cyclohedron all satisfy the \EFP. The \NLFP{} is further discussed in~\cite{CeballosPilaud-diameter}.

\begin{figure}
  \capstart
  \centerline{\includegraphics[width=\textwidth]{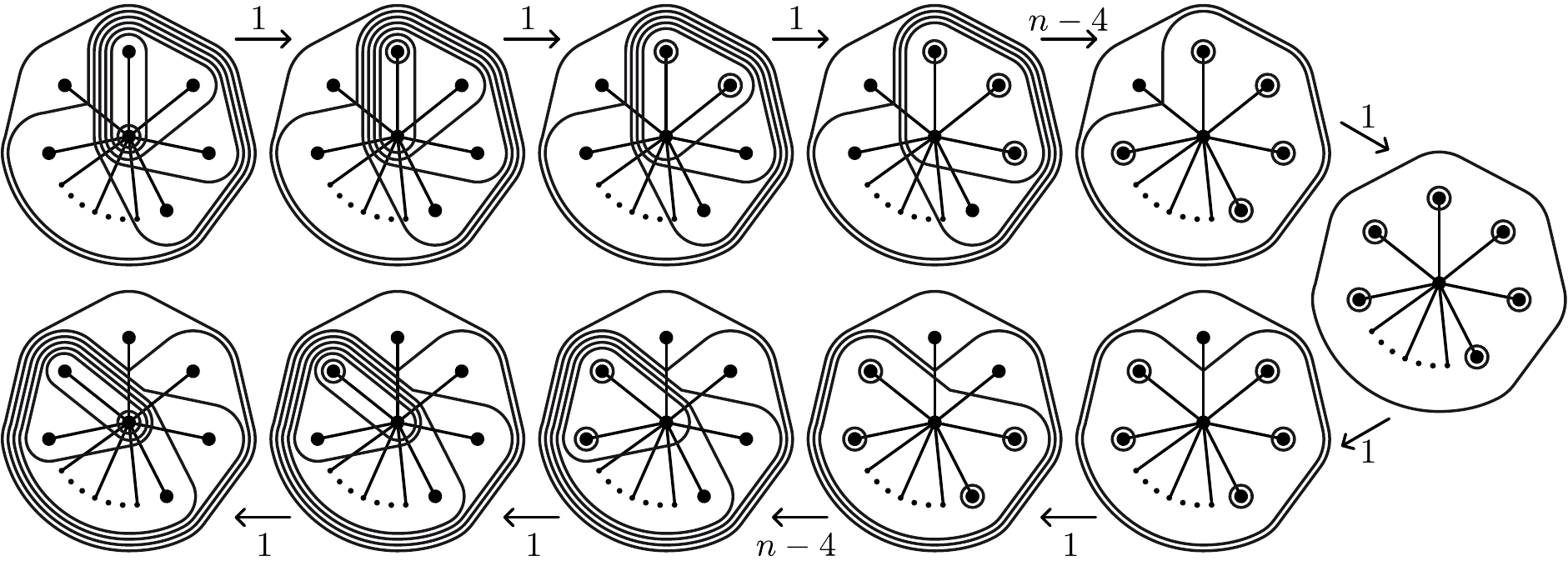}}
  \caption{A geodesic (of length~$2n$) between two maximal tubings of the star that flips their common tube (the central vertex).}
  \label{fig:ctrexmNLFP}
\end{figure}

Contrarily to the classical associahedron, not all faces of a graph associahedron have the \NLFP. A counter-example is given by the star with $n$ branches: \fref{fig:ctrexmNLFP} shows a path of length~$2n$ between two maximal tubings~$\tubing, \tubing'$, while the minimal face containing~$\tubing$ and~$\tubing'$ is an $(n-1)$-dimensional permutahedron (see the face description in~\cite[Theorem~2.9]{CarrDevadoss}) and the graph distance between~$\tubing$ and~$\tubing'$ in this face is~$\binom{n}{2}$. It turns out however that the following faces of the graph associahedra, and more generally of nestohedra, always have the \SNLFP.

\begin{lemma}
\label{lem:upperIdealFace}
We call \defn{upper ideal face} of the nestohedron~$\Nest(\building)$ a face corresponding to a loaded nested set~$\nested^\uparrow$ that satisfies the following equivalent properties:
\begin{enumerate}[(i)]
\item any element of~$\building$ not in~$\nested^\uparrow$ but compatible with~$\nested^\uparrow$ is contained in an inclusion minimal element of~$\nested^\uparrow$,
\item the set~$\lab(\nestelem, \nested^\uparrow) \eqdef \nestelem \ssm \bigcup \set{\nestelem' \in \nested^\uparrow}{\nestelem' \subsetneq \nestelem}$ is a singleton for any inclusion non-minimal element~$\nestelem$ of~$\nested^\uparrow$,
\item the forest obtained by deleting all leaves of the spine~$\spine^\uparrow$ of~$\nested^\uparrow$ forms an upper ideal of any spine refining~$\spine^\uparrow$.
\end{enumerate}
\end{lemma}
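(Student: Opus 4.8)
The plan is to prove the two equivalences $(i)\Leftrightarrow(ii)$ and $(ii)\Leftrightarrow(iii)$; the first is a purely set-theoretic statement about the building set~$\building$, the second its translation through the spine dictionary of Section~\ref{subsec:spines}. The useful preliminary remark for dealing with~$(i)$ is that, for any $\block\in\building\ssm\nested^\uparrow$ compatible with~$\nested^\uparrow$, the elements of~$\nested^\uparrow$ containing~$\block$ pairwise intersect, hence are pairwise nested by~(N1), and there is at least one of them since $\block$ lies in some maximal element of~$\building$ and $\building_{\max}\subseteq\nested^\uparrow$; so there is a smallest one, $\nestelem(\block)$. Moreover $\block$ is contained in an inclusion-minimal element of~$\nested^\uparrow$ if and only if $\nestelem(\block)$ is itself inclusion-minimal in~$\nested^\uparrow$. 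Thus $(i)$ is equivalent to: for every non-minimal $\nestelem\in\nested^\uparrow$ there is no $\block\in\building\ssm\nested^\uparrow$ with $\nested^\uparrow\cup\{\block\}$ nested, $\block\subsetneq\nestelem$, and $\block\not\subseteq\nestelem_i$ for every child $\nestelem_i$ of~$\nestelem$ in~$\spine^\uparrow$.

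For $(ii)\Rightarrow(i)$ I would take such a~$\block$ and its witness $\nestelem=\nestelem(\block)$, assume $\nestelem$ non-minimal with children $\nestelem_1,\dots,\nestelem_m$ ($m\ge1$), and reach a contradiction. By~$(ii)$, $\lab(\nestelem,\nested^\uparrow)=\{v\}$, so $\nestelem=\{v\}\sqcup\nestelem_1\sqcup\dots\sqcup\nestelem_m$. Since $\block$ satisfies~(N1) with each $\nestelem_i$ and $\block\not\subseteq\nestelem_i$ (minimality of~$\nestelem$), we get $\nestelem_i\subseteq\block$ or $\nestelem_i\cap\block=\varnothing$; write $I=\{i:\nestelem_i\subseteq\block\}$. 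If $v\notin\block$, then $\block=\bigcup_{i\in I}\nestelem_i$, which equals some $\nestelem_i$ when $|I|=1$ (contradicting $\block\notin\nested^\uparrow$), is empty when $|I|=0$ (impossible), and is a union of $\ge2$ pairwise disjoint members of~$\nested^\uparrow$ lying in~$\building$ when $|I|\ge2$ (contradicting~(N2) for~$\nested^\uparrow$). If $v\in\block$, then $\block=\{v\}\cup\bigcup_{i\in I}\nestelem_i$ with $I\subsetneq[m]$ (else $\block=\nestelem$), so $\block$ together with the $\nestelem_i$, $i\notin I$, form $\ge2$ pairwise disjoint sets of $\nested^\uparrow\cup\{\block\}$ whose union is $\nestelem\in\building$, contradicting~(N2) for $\nested^\uparrow\cup\{\block\}$, that is the compatibility of~$\block$. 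Hence $\nestelem$ is minimal and $(i)$ holds.

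For $(i)\Rightarrow(ii)$ I would argue contrapositively: suppose some non-minimal $\nestelem\in\nested^\uparrow$, with children $\nestelem_1,\dots,\nestelem_m$, has $|\lab(\nestelem,\nested^\uparrow)|\ge2$; I must exhibit a~$\block$ as in the reformulation of~$(i)$. The natural first candidate is a singleton $\{v\}$ with $v\in\lab(\nestelem,\nested^\uparrow)$: it lies in~$\building$ by~(B2); it is not in~$\nested^\uparrow$, since $v$ belongs to no proper $\nested^\uparrow$-descendant of~$\nestelem$; it is strictly contained in~$\nestelem$ because $|\nestelem|\ge2$, and in no child~$\nestelem_i$ because $v\in\lab(\nestelem,\nested^\uparrow)=\nestelem\ssm(\nestelem_1\cup\dots\cup\nestelem_m)$; and (N1) for $\nested^\uparrow\cup\{\{v\}\}$ holds automatically. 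It then remains to check~(N2) for $\nested^\uparrow\cup\{\{v\}\}$, i.e. that there is no family $\nestelem_1',\dots,\nestelem_r'$ of pairwise disjoint sets of~$\nested^\uparrow$, all avoiding~$v$, with $\{v\}\cup\nestelem_1'\cup\dots\cup\nestelem_r'\in\building$.

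For $(ii)\Leftrightarrow(iii)$ I translate to spines. A spine~$\spine$ refines~$\spine^\uparrow$ exactly when the nested set of~$\spine$ contains~$\nested^\uparrow$, and then, by the description of flips in Section~\ref{subsec:flips} together with $(i)\Leftrightarrow(ii)$, a node of~$\spine$ not in~$\nested^\uparrow$ can sit strictly between an element $\nestelem\in\nested^\uparrow$ and one of its $\spine^\uparrow$-children only if $|\lab(\nestelem,\nested^\uparrow)|\ge2$. Thus if $(ii)$ holds, in every refinement~$\spine$ each non-leaf node of~$\spine^\uparrow$ keeps all its $\spine^\uparrow$-descendants below it and gains no ancestor outside~$\nested^\uparrow$, so the forest obtained from~$\spine^\uparrow$ by deleting its leaves is still an upper ideal of~$\spine$; conversely, if $(ii)$ fails at a non-leaf~$\nestelem$, then refining~$\spine^\uparrow$ at~$\nestelem$ alone --- inserting a proper element of the local building set at~$\nestelem$ sandwiched between~$\nestelem$ and one of its children --- yields a refinement in which the non-leaf part of~$\spine^\uparrow$ is no longer upward closed. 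The step I expect to be the main obstacle is exactly this existence claim, which appears in both contrapositive directions: that when $|\lab(\nestelem,\nested^\uparrow)|\ge2$ there really is a $\block\in\building$ with $\block\subsetneq\nestelem$, $\block\not\subseteq\nestelem_i$ for all~$i$, and $\nested^\uparrow\cup\{\block\}$ nested. It amounts to ruling out (N2)-violations for $\nested^\uparrow\cup\{\{v\}\}$ --- or, if the singleton happens to fail, to choosing a minimal suitable~$\block$ inside the link of~$\nested^\uparrow$ at~$\nestelem$ --- and requires a careful bookkeeping of how the members of~$\nested^\uparrow$ compare with the candidate; everything else is routine case analysis on the spine.
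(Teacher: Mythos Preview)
Your argument for $(ii)\Rightarrow(i)$ is correct, though more elaborate than needed; the paper compresses your case analysis into the single observation that $\lab(\nestelem,\nested^\uparrow)$ must meet both $\block$ and $\nestelem\ssm\block$, and hence cannot be a singleton.

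The genuine gap is exactly where you flag it: the contrapositive of $(i)\Rightarrow(ii)$ (and the same existence claim reused in your sketch of $(iii)\Rightarrow(ii)$). Your candidate $\{v\}$ can indeed fail~(N2) --- already in a graphical building set, if $v$ is adjacent to a vertex of some child $\nestelem_i$, then $\{v\}\cup\nestelem_i$ is a tube and $\{v\},\nestelem_i$ violate~(N2). Your fallback of a \emph{minimal} suitable $\block$ goes in the wrong direction and does not obviously help with~(N2).

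The paper's missing idea is to exploit \emph{both} elements of the non-singleton label: pick distinct $v,w\in\lab(\nestelem,\nested^\uparrow)$ and let $\block$ be the \emph{maximal} element of~$\building$ contained in~$\nestelem$, containing~$v$, and not containing~$w$. Then $\block\notin\nested^\uparrow$ (it contains $v$ so is not inside any child, and misses $w$ so is not $\nestelem$ or larger), and~(N1) is routine. For~(N2), suppose $\block\sqcup\nestelem'_1\sqcup\dots\sqcup\nestelem'_r\in\building$ with $\nestelem'_j\in\nested^\uparrow$ disjoint from~$\block$. Each $\nestelem'_j$ avoids~$v$, hence by~(N1) is either contained in~$\nestelem$ or disjoint from it; and it avoids~$w$ since $w\in\lab(\nestelem,\nested^\uparrow)$. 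If some $\nestelem'_j$ is disjoint from~$\nestelem$, then $\nestelem$ together with those $\nestelem'_j$ are pairwise disjoint in~$\nested^\uparrow$ with union $\nestelem\cup\block\cup\bigcup_j\nestelem'_j\in\building$, contradicting~(N2) for~$\nested^\uparrow$. Otherwise all $\nestelem'_j\subseteq\nestelem$, so the union is a member of~$\building$ inside~$\nestelem$, containing~$v$, avoiding~$w$, and strictly larger than~$\block$ --- contradicting maximality. Once you have this $\block$, both $(i)\Rightarrow(ii)$ and the splitting step in $(iii)\Rightarrow(ii)$ go through.
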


\begin{proof}
We first prove that~(i)$\implies$(ii). Assume that~$\nestelem \in \nested^\uparrow$ is not inclusion minimal and that~$\lab(\nestelem,\nested^\uparrow)$ contains two distinct elements~$v,w \in \ground$. One can then check that the maximal element of~$\building$ contained in~$\nestelem$ and containing~$v$ but not~$w$ is compatible with~$\nested^\uparrow$, but not contained in an inclusion minimal element of~$\nested^\uparrow$. This proves that~(i)$\implies$(ii).

Conversely, assume~(ii) and consider~$\block \in \building$ not in~$\nested^\uparrow$ but compatible with~$\nested^\uparrow$. Since~$\nested^\uparrow$ is loaded, there exists~$\nestelem \in \nested^\uparrow$ strictly containing~$\block$ and minimal for this property. Since~$\block$ is compatible with~$\nested^\uparrow$, we obtain that~$\lab(\nestelem, \nested^\uparrow)$ contains at least one element from~$\block$ and one from~$\nestelem \ssm \block$, and is thus not a singleton. It follows by~(ii) that~$\nestelem$ is an inclusion minimal element of~$\nested^\uparrow$, and it contains~$\block$.

The equivalence (ii)$\iff$(iii) follows directly from the definition of the spines and their labelings, and the fact that a non-singleton node in a spine can be split in a refining spine.
\end{proof}

\begin{proposition}
\label{prop:SNLFP}
Any upper ideal face of the nestohedron~$\Nest(\building)$ satisfies \SNLFP.
\end{proposition}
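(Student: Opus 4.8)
I want to show that any upper ideal face $\face = \face(\nested^\uparrow)$ of $\Nest(\building)$ has the strong non-leaving-face property: any path in $\flipGraph(\building)$ between two maximal loaded nested sets $\nested, \nested'$ extending $\nested^\uparrow$ that leaves $\face$ uses at least two more flips than a geodesic from $\nested$ to $\nested'$ inside $\face$. The key observation is a projection in the spirit of Corollary~\ref{coro:surjectionNested}: there should be a retraction $r$ from the whole flip graph $\flipGraph(\building)$ onto the subgraph $\flipGraph(\face)$ (the graph of the face, which is again a nestohedron graph by the face description of nestohedra) that is distance-non-increasing, fixes $\nested$ and $\nested'$, and — crucially — \emph{strictly} decreases the length of any edge that leaves the face. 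The first two properties already give \NLFP; the strict decrease on leaving-edges upgrades it to \SNLFP.

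**Constructing the retraction.** Using characterization~(iii) of Lemma~\ref{lem:upperIdealFace}, the spine $\spine^\uparrow$ of $\nested^\uparrow$, after deleting its leaves, is an upper ideal $I$ in every refining spine. A maximal loaded nested set $\nested$ lies in $\face$ iff its spine $\spine$ contains this upper ideal $I$ (with each node of $I$ a singleton). Given an arbitrary maximal loaded nested set $\building$-spine $\spine$, I define $r(\spine)$ by forcing the elements of $I$ to occupy their prescribed positions: concretely, on the level of nested sets, replace each $\nestelem \in \nested$ by the coarsest refinement that makes $\nested^\uparrow$ a subset — equivalently, restrict each element of the building set appropriately as in the proof of Corollary~\ref{coro:surjectionNested}. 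I then need to check (a) $r(\spine)$ is a valid maximal nested set on $\building$ extending $\nested^\uparrow$, using that $\nested^\uparrow$ is loaded and satisfies (i); (b) $r$ fixes every spine already in $\face$; (c) if $\spine \flip \bar\spine$ is a flip, then $r(\spine)$ and $r(\bar\spine)$ are equal or adjacent — this mirrors the cardinality argument $|\surjection(\tubing \cap \tubing')| \ge |\tubing \cap \tubing'|$ from Corollary~\ref{coro:surjectionFlips}.

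**The strict decrease.** The new ingredient beyond \NLFP{} is: if the flip $\spine \flip \bar\spine$ is a \emph{leaving flip} for $\face$ (i.e. $\bar\spine \notin \face$), then $r$ identifies them, $r(\spine) = r(\bar\spine)$. This is the heart of the argument. The point is that a flip leaving $\face$ must be a rotation at an edge of the upper ideal $I$ — it moves a node of $I$ out of its forced position or pulls a leaf into $I$'s territory — and $r$, by definition, "undoes" exactly such rotations; so the two spines differ only in the part that $r$ collapses. Thus along any path from $\nested$ to $\nested'$ that leaves $\face$, applying $r$ and deleting repetitions yields a path of length at most $\ell - 1$ inside $\face$ (at least one leaving step, actually two since one must leave and one must re-enter, contributes a repetition), and since a geodesic inside $\face$ has length at most that of $r$-image, we get the gap of at least $2$.

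**Main obstacle.** The delicate point I expect to fight with is verifying that $r$ is well-defined and sends flips to flips-or-equalities when the flipped element interacts with the upper ideal $I$ in a subtle way — in a general building set a rotation can reshuffle several children at once, and I must be careful that the "forced positions" of the nodes of $I$ are genuinely respected by $r(\bar\spine)$ and that no new violation of (N1)/(N2) is created. Characterizations (i) and (ii) of Lemma~\ref{lem:upperIdealFace} are precisely what makes this work: (i) guarantees that every building-set element compatible with $\nested^\uparrow$ sits inside a minimal element of $\nested^\uparrow$, so the "collapsing" has nowhere bad to go; (ii) guarantees the non-minimal part of $\nested^\uparrow$ is rigid. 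I would set up the combinatorics on spines rather than nested sets, since the local description of flips as rotations (Section~\ref{subsec:flips}) makes the case analysis — leaving flip versus internal flip — most transparent there.
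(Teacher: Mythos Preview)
Your proposal is correct and follows essentially the same approach as the paper's proof. The paper makes your retraction $r$ explicit by introducing an auxiliary building set $\bar\building \subseteq \building$ (the elements of $\building$ contained in a minimal element of $\nested^\uparrow$, plus the remaining singletons) and defining the \emph{normalization} $\normalization(\nested) \eqdef (\surjection(\nested) \ssm \bar\building_{\max}) \cup \nested^\uparrow$ via the surjection of Corollary~\ref{coro:surjectionNested}; the key claim---that a flip leaving the face is collapsed by $\normalization$, so the leaving and re-entering steps each produce a repetition and the projected path is at least two steps shorter---is exactly as you outline.
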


\begin{proof}
Consider an upper ideal face~$\face$ of~$\Nest(\building)$ corresponding to the loaded nested set~$\nested^\uparrow$. We consider the building set~$\bar\building \subseteq \building$ on~$\ground$ consisting of all elements of~$\building$ (weakly) contained in an inclusion minimal element of~$\nested^\uparrow$ together with all singletons~$\{v\}$ for elements~$v \in \ground$ not contained in any inclusion minimal element of~$\nested^\uparrow$. The reader is invited to check that~$\bar\building$ is indeed a building set on~$\ground$. It follows from Lemma~\ref{lem:upperIdealFace} that
\begin{itemize}
\item $\lab(\nestelem, \nested^\uparrow) = \nestelem$ if~$\nestelem$ is an inclusion minimal element of~$\nested^\uparrow$, 
\item $\lab(\nestelem,\nested^\uparrow) = \{v\}$ for some~$v$ not contained in any inclusion minimal element of~$\nested^\uparrow$~otherwise,
\end{itemize}
and thus that the map~$\lab(\cdot, \nested^\uparrow)$ is a bijection from~$\nested^\uparrow$ to~$\bar\building_{\max}$.

Consider the surjection~$\surjection$ from the maximal nested sets on~$\building$ to the maximal nested sets on~$\bar\building$ as defined in the previous section: $\surjection(\nested) = \bigcup_{\nestelem \in \nested} \surjection(\nestelem)$ where~$\surjection(\nestelem)$ is the coarsest partition of~$\nestelem$ into elements of~$\bar\building$. Following~\cite{SleatorTarjanThurston, CeballosPilaud-diameter}, we consider the \defn{normalization}~$\normalization$ on maximal nested sets on~$\building$ defined by~$\normalization(\nested) \eqdef \big( \surjection(\nested) \ssm \bar\building_{\max} \big) \cup \nested^\uparrow$. We claim that~$\normalization(\nested)$ is a maximal nested set~on~$\building$:
\begin{itemize}
\item it is nested since both~$\surjection(\nested) \ssm \bar\building_{\max}$ and~$\nested^\uparrow$ are themselves nested, and all elements of~${\surjection(\nested) \ssm \bar\building_{\max}}$ are contained in a minimal element of~$\nested^\uparrow$.
\item it is maximal since~$\surjection(\nested)$ is maximal by Corollary~\ref{coro:surjectionNested} and~$|\normalization(\nested)| = |\surjection(\nested)|$ because~$\lab(\cdot, \nested^\uparrow)$ is a bijection from~$\nested^\uparrow$ to~$\bar\building_{\max}$, and~$\bar\building_{\max} \subseteq \surjection(\nested)$ while~$\big( \surjection(\nested) \ssm \bar\building_{\max} \big) \cap \nested^\uparrow = \varnothing$.
\end{itemize}
It follows that the map~$\normalization$ combinatorially projects the nestohedron~$\Nest(\building)$ onto its face~$\face$.

Let~$\nested_0, \dots, \nested_\ell$ be a path in the loaded flip graph~$\flipGraph(\building)$ whose endpoints~$\nested_0, \nested_\ell$ lie in the face~$\face$, but which leaves the face~$\face$. In other words,~$\nested^\uparrow \subseteq \nested_0, \nested_\ell$ and there are~$0 \le i < j \le \ell$ such that~$\nested^\uparrow \subseteq \nested_i, \nested_j$ while~${\nested^\uparrow \not\subseteq \nested_{i+1}, \nested_{j-1}}$. We claim that
\[
\normalization(\nested_0) = \nested_0, \quad \normalization(\nested_\ell) = \nested_\ell, \quad \normalization(\nested_i) = \nested_i = \normalization(\nested_{i+1}) \quad\text{and}\quad \normalization(\nested_{j-1}) = \nested_j = \normalization(\nested_j),
\]
so that the path~$\nested_0 = \normalization(\nested_0), \dots, \normalization(\nested_\ell) = \nested_\ell$ from~$\nested_0$ to~$\nested_\ell$ in~$\face$ has length at most~$\ell-2$ after deletion of repetitions.

To prove our claim, consider a loaded nested set~$\nested$ on~$\building$ containing a maximal proper nested set~$\bar\nested$ on~$\bar\building$. Then~$\surjection(\nested) \supseteq \surjection(\bar\nested) = \bar\nested$ so that~$\surjection(\nested) = \bar\nested \cup \bar\building_{\max}$ by maximality of~$\bar\nested$. This shows~${\normalization(\nested) = \bar\nested \cup \nested^\uparrow}$. In particular, if~$\nested = \bar\nested \cup \nested^\uparrow$, then~$\normalization(\nested) = \nested$. Moreover, if~$\nested'$ is adjacent to~$\nested = \bar\nested \cup \nested^\uparrow$ and does not contain~$\nested^\uparrow$, then~$\nested'$ contains~$\bar\nested$ and~$\normalization(\nested') = \nested$. This shows the claim and concludes~the~proof.
\end{proof}

Proposition~\ref{prop:SNLFP} specializes in particular to the non-leaving-face and entering face properties for the upper set faces of graph associahedra.

\begin{proposition}
\label{prop:upperSet2}
\begin{enumerate}[(i)]
\item If~$\tubing$ and~$\tubing'$ are two maximal tubings on~$\graphG$, then any maximal tubing on a geodesic between~$\tubing$ and~$\tubing'$ in the flip graph~$\flipGraph(\graphG)$ contains any common upper set to the inclusion posets of~$\tubing$ and~$\tubing'$.
\item If~$\tubing$, $\tubing'$ and~$\tubing''$ are three maximal tubings on~$\graphG$ such that~$\tubing \ssm \{\tube\} = \tubing' \ssm \{\tube'\}$ and~$\tube'$ belongs to the maximal common upper set to the inclusion poset of~$\tubing'$ and~$\tubing''$, then there is a geodesic between~$\tubing$ and~$\tubing''$ starting by the flip from~$\tubing$ to~$\tubing'$.
\end{enumerate}
\end{proposition}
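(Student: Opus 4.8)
The plan is to reduce both items to Proposition~\ref{prop:SNLFP}, via the following claim: for any two maximal tubings $\tubing$ and $\tubing'$ on $\graphG$, the maximal common upper set $\nested^\uparrow$ of their inclusion posets (which is itself a common upper set, being the union of all of them) is an upper ideal face of $\Asso(\graphG)$. As usual I would first reduce to $\graphG$ connected, since the flip graph, the notion of common upper set, and the upper ideal face property all respect the decomposition into connected components; then $\ground \in \nested^\uparrow$ (its ancestors in both tubings are empty), so $\nested^\uparrow$ is a loaded nested set.

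To prove the claim I would note that $\nested^\uparrow$ is exactly the set of tubes $e$ for which the tubes of $\tubing$ containing $e$ and the tubes of $\tubing'$ containing $e$ form the same set (in particular $e \in \tubing \cap \tubing'$). Let $\nestelem \in \nested^\uparrow$ be non-minimal and pick a maximal element $e$ of $\nested^\uparrow$ with $e \subsetneq \nestelem$. If a tube of $\tubing$ lay strictly between $e$ and $\nestelem$, it would be one of the tubes of $\tubing$ containing $e$, hence one of the tubes of $\tubing'$ containing $e$, hence (its own ancestors being among those of $e$, they too match) it would belong to $\nested^\uparrow$, contradicting the maximality of $e$. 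So $e$ is a child of $\nestelem$ in $\tubing$, that is, a connected component of $\graphG{}[\nestelem \ssm \{v\}]$ with $\{v\} = \lab(\nestelem, \tubing)$; in particular $v$ is the unique vertex of $\nestelem \ssm e$ adjacent to $e$, a description not involving $\tubing$. The same argument for $\tubing'$ yields $\lab(\nestelem, \tubing') = \{v\}$, and therefore $\nestelem$ has the same children — the connected components of $\graphG{}[\nestelem \ssm \{v\}]$ — in $\tubing$ and in $\tubing'$. Each such child $c$ lies in $\tubing \cap \tubing'$, and since the tubes containing $c$ are $\nestelem$ together with the tubes containing $\nestelem$ (which coincide in $\tubing$ and $\tubing'$), we get $c \in \nested^\uparrow$. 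As the union of these children is $\nestelem \ssm \{v\}$, this forces $\lab(\nestelem, \nested^\uparrow) = \{v\}$, a singleton. By Lemma~\ref{lem:upperIdealFace}(ii) this proves the claim.

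Proposition~\ref{prop:SNLFP} then shows that the face $\face$ associated with $\nested^\uparrow$ has the \SNLFP, hence the \NLFP{} and, since $\EFP \iff \SNLFP$, the \EFP. For item~(i): $\tubing$ and $\tubing'$ are vertices of $\face$, so every geodesic between them stays in $\face$, and each of its tubings contains $\nested^\uparrow$, hence contains every common upper set of $\tubing$ and $\tubing'$. For item~(ii): apply the claim to $\tubing'$ and $\tubing''$ and let $\face$ be the resulting upper ideal face, so that $\tubing', \tubing'' \in \face$; since $\tube'$ lies in the maximal common upper set of $\tubing'$ and $\tubing''$ it lies in $\nested^\uparrow$, so from $\tube' \notin \tubing = (\tubing' \ssm \{\tube'\}) \cup \{\tube\}$ we get $\tubing \notin \face$; as $\tubing$ and $\tubing'$ are neighbors, the \EFP{} of $\face$ applied with $(u,v,w) = (\tubing, \tubing', \tubing'')$ produces a geodesic between $\tubing$ and $\tubing''$ whose first flip is $\tubing \flip \tubing'$.

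The crux is the rigidity step in the second paragraph: a non-minimal element $\nestelem$ of the maximal common upper set has the same label, and therefore the same children, in $\tubing$ and in $\tubing'$. This is exactly where the \emph{upper set} hypothesis is used — it is false that an arbitrary face containing $\tubing$ and $\tubing'$ has the \NLFP, as the star example above shows — and once it is in place the rest is a direct verification of Lemma~\ref{lem:upperIdealFace}(ii) followed by an appeal to Proposition~\ref{prop:SNLFP}.
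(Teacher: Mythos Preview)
Your proof is correct and follows exactly the paper's approach: reduce both statements to Proposition~\ref{prop:SNLFP} by checking that the maximal common upper set~$\nested^\uparrow$ satisfies criterion~(ii) of Lemma~\ref{lem:upperIdealFace}, picking a maximal element~$e$ of~$\nested^\uparrow$ strictly below a given non-minimal~$\nestelem$ and identifying the unique neighbor~$v$ of~$e$ in~$\graphG{}[\nestelem]$. Your write-up is in fact more careful than the paper's, since you make explicit the step that~$e$ must be a \emph{child} of~$\nestelem$ in both~$\tubing$ and~$\tubing'$ (via your characterization of~$\nested^\uparrow$ through matching ancestor sets), which the paper leaves implicit when it asserts that ``$\tube'$ has a unique neighbor~$v$ in~$\graphG{}[\tube]$'' and that the resulting components lie in~$\tubing^\uparrow$.
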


\begin{proof}
Using Proposition~\ref{prop:SNLFP}, it is enough to show that the maximal common upper set~$\tubing^\uparrow$ to the inclusion posets of~$\tubing$ and~$\tubing'$ defines an upper ideal face of~$\Asso(\graphG)$. For this, we use the characterization~(ii) of Lemma~\ref{lem:upperIdealFace}. Consider an inclusion non-minimal tube~$\tube$ of~$\tubing^\uparrow$. Let~$\tube'$ be a maximal tube of~$\tubing^\uparrow$ such that~$\tube' \subsetneq \tube$. Then~$\tube'$ has a unique neighbor~$v$ in~$\graphG{}[\tube]$ and all connected components of~$\graphG{}[\tube \ssm \{v\}]$ are both in~$\tubing$ and~$\tubing'$, thus in~$\tubing^\uparrow$. Thus~$\lab(\tube, \tubing^\uparrow) = \{v\}$.
\end{proof}

\begin{remark}
For an arbitrary building set~$\building$, the maximal common upper set~$\nested^\uparrow$ to the inclusion poset of two maximal nested sets~$\nested, \nested'$ is not always an upper ideal face of~$\Nest(\building)$. A minimal example is the building set~$\building = \big\{ \{1\}, \{2\}, \{3\}, \{1,2,3\} \big\}$ and the nested sets~$\nested = \big\{ \{1\}, \{2\}, \{1,2,3\} \big\}$ and~$\nested' = \big\{ \{2\}, \{3\}, \{1,2,3\} \big\}$. Their maximal common upper set~$\nested^\uparrow = \big\{ \{2\}, \{1,2,3\} \big\}$ is not an upper ideal face of~$\Nest(\building)$ since~$\lab(\{1,2,3\}, \nested^\uparrow) = \{1,3\}$ is not a singleton. Moreover, the face corresponding to~$\nested^\uparrow$ does not satisfy \SNLFP.
\end{remark}


\subsection{Diameter bounds}

Using Theorem~\ref{theo:increasingDiameter2} and Proposition~\ref{prop:upperSet2}, the lower bound on the diameter of the associahedron~\cite{Pournin}, the classical construction of graph associahedra of~\cite{CarrDevadoss,Postnikov} and the diameter of graphical zonotopes, we obtain the inequalities on the diameter~$\diam{\graphG}$ of~$\flipGraph(\graphG)$.

\begin{theorem}
\label{theo:boundsDiameter}
For any connected graph~$\graphG$ with~$n+1$ vertices and~$e$ edges, the diameter~$\diam{\graphG}$ of the flip graph~$\flipGraph(\graphG)$ is bounded by
\[
\max(e,2n - 18) \le \diam{\graphG} \le \binom{n+1}{2}.
\]
\end{theorem}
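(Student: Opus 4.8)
The plan is to establish the two bounds separately, each by monotonicity (Theorem~\ref{theo:increasingDiameter2}) together with a known extremal case. For the upper bound, observe that a connected graph~$\graphG$ on~$n+1$ vertices is a subgraph of the complete graph~$\completeG_{n+1}$, so Theorem~\ref{theo:increasingDiameter2} gives~$\diam{\graphG} \le \diam{\completeG_{n+1}} = \binom{n+1}{2}$, the latter equality being the classical count of inversions in the permutahedron. For the lower bound there are two terms. The bound~$\diam{\graphG} \ge 2n-18$ should follow by finding, inside~$\graphG$, a large induced path and applying monotonicity in the reverse direction: if~$\graphG$ contains an induced path~$\pathG_{k+1}$, then~$\diam{\graphG} \ge \diam{\pathG_{k+1}} = 2k-4$ for~$k \ge 9$ by Pournin's theorem~\cite{Pournin}. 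Here the subtlety is that~$\graphG$ need not contain a long induced path; but every connected graph on~$n+1$ vertices contains a spanning tree, and every tree on~$n+1$ vertices contains an induced path on at least~$\log$-many vertices in the worst case, which is not enough. So instead I would use the Carr--Devadoss construction: the graph associahedron~$\Asso(\graphG)$ is obtained from~$\Asso(\pathG_{n+1})$ by adding edges, i.e.\ $\pathG_{n+1}$ sits inside~$\graphG$ as a \emph{spanning} subgraph whenever~$\graphG$ has a Hamiltonian path, and more generally a spanning tree of~$\graphG$ already gives~$\Asso(\text{tree}) \subseteq \Asso(\graphG)$. The right move is: take any spanning tree~$\graphT$ of~$\graphG$; then~$\diam{\graphG} \ge \diam{\graphT}$, and one reduces to showing~$\diam{\graphT} \ge 2n-18$ for every tree~$\graphT$ on~$n+1$ vertices. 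For a tree, a longest \emph{leaf-to-leaf} path is automatically induced; if it has length~$\ge 9$ we are done by Pournin, but a star has diameter-$2$ paths only. This is exactly where Proposition~\ref{prop:upperSet2} enters: contracting the non-path part of the tree onto a long path and using the non-leaving-face property lets one transfer Pournin's lower bound up to~$\graphG$ with only a bounded loss, giving the additive constant~$-18$ rather than~$-4$.

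Concretely, I would argue as follows. Pick a longest path~$\pathG$ in the spanning tree~$\graphT$, say on~$p+1$ vertices. Every vertex of~$\graphT$ not on~$\pathG$ hangs off~$\pathG$ through a unique attachment point; contracting each such pendant subtree to its attachment point realizes~$\pathG_{p+1}$ as a \emph{minor} obtained by edge contractions, but — crucially — at the level of building sets and nested complexes, $\building(\pathG_{p+1})$ embeds into~$\building(\graphT)$ as the tubes supported on~$\pathG$, so~$\flipGraph(\pathG_{p+1})$ is a face of~$\flipGraph(\graphT)$, and by Proposition~\ref{prop:upperSet2}(i) (the \NLFP{} for the corresponding upper ideal face) geodesics between two tubings of that face realizing~$\diam{\pathG_{p+1}}$ stay in the face, hence~$\diam{\graphT} \ge \diam{\pathG_{p+1}} = 2p-4$ for~$p \ge 9$. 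It remains to lower-bound~$p$ in terms of~$n$. This fails in general (stars!), so one needs a dichotomy: if the longest path in~$\graphT$ is short, $\graphT$ is ``close to a star'' and one instead bounds~$\diam{\graphT}$ from below using that~$\graphT$ then contains a large induced star~$\starG_k$, whose graph associahedron is the~$(k-1)$-dimensional permutahedron with diameter~$\binom{k}{2}$, again a face of~$\flipGraph(\graphT)$ by Proposition~\ref{prop:upperSet2}. Balancing the two cases — a path of length~$p$ or a star of size~$k$ with, roughly, $p + k \gtrsim n$ — and optimizing~$\min(2p-4, \binom{k}{2})$ subject to~$p+k \ge n$ yields a bound linear in~$n$; tracking the constants from Pournin's threshold~$p \ge 9$ and the small-$n$ exceptions produces the stated~$2n-18$.

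Finally, the bound~$\diam{\graphG} \ge e$: this is where graphical zonotopes come in. The graphical zonotope~$\mathsf{Z}(\graphG)$ is the Minkowski sum of the segments~$[e_u,e_v]$ over edges~$(u,v)$ of~$\graphG$; it is a generalized permutahedron whose normal fan is refined by the normal fan of~$\Asso(\completeG_{n+1})$ and coarsened by (i.e.\ is a coarsening towards) that of~$\Asso(\graphG)$ in the sense that~$\Asso(\graphG)$ is a Minkowski summand-compatible deformation, so there is a combinatorial surjection~$\flipGraph(\graphG) \twoheadrightarrow \mathsf{G}(\mathsf{Z}(\graphG))$ analogous to~$\surjection$; hence~$\diam{\graphG} \ge \delta(\mathsf{G}(\mathsf{Z}(\graphG)))$. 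The diameter of the graph of a graphical zonotope equals the number of edges~$e$ of~$\graphG$: vertices of~$\mathsf{Z}(\graphG)$ are acyclic orientations of~$\graphG$, adjacent vertices differ by reversing the orientation of a single edge, so the distance between the two opposite acyclic orientations induced by a fixed linear order and its reverse is exactly~$e$, and no pair is farther apart. Combining the three inequalities gives~$\max(e, 2n-18) \le \diam{\graphG} \le \binom{n+1}{2}$.

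I expect the main obstacle to be the second (lower) bound: making the path-versus-star dichotomy precise, verifying that the relevant sub-building-sets (path on a spanning tree; induced star) really correspond to \emph{upper ideal faces} so that Proposition~\ref{prop:upperSet2} applies, and then bookkeeping the constants through Pournin's~$n \ge 9$ hypothesis to land exactly on~$2n-18$ (including checking the handful of small cases where the linear bound is vacuous or negative). The zonotope surjection, while conceptually the cleanest, also requires a clean statement that~$\Asso(\graphG)$ refines~$\mathsf{Z}(\graphG)$ as a deformation of the permutahedron — this is standard but should be cited carefully.
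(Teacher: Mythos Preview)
Your upper bound and the lower bound~$\diam{\graphG} \ge e$ are correct and match the paper's argument exactly: monotonicity toward the permutahedron for the former, and the normal-fan refinement~$\Asso(\graphG) \to \mathsf{Z}(\graphG)$ (equivalently, your surjection onto the graph of the graphical zonotope) for the latter.

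The gap is in your argument for~$\diam{\graphT} \ge 2n-18$ on trees. Two problems. First, the star associahedron~$\Asso(\starG_k)$ is \emph{not} the permutahedron; it is the stellohedron, and its diameter is~$2k$, not~$\binom{k}{2}$ (see Remark~\ref{rem:badLowerBound}). Second, and more seriously, the path-versus-star dichotomy is false as stated: a tree~$\graphT$ can have both a short longest path and small maximum degree, so that neither~$p$ nor~$k$ is linear in~$n$. The complete binary tree on~$n+1$ vertices has longest path of length~$\sim 2\log_2 n$ and maximum degree~$3$, so~$p+k \sim 2\log_2 n + 4$, which is nowhere near~$n$. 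Your optimization ``$p+k \gtrsim n$'' therefore has no basis, and the argument collapses precisely on the family of trees where the bound is most delicate.

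The paper's route is entirely different. After reducing to trees by monotonicity, it treats trees with~$2$,~$3$, and~$4$ leaves explicitly (getting~$2n-4$,~$2n-11$,~$2n-18$ respectively via Proposition~\ref{prop:upperSet2} applied to an upper-set face that splits the tree at a carefully chosen vertex into two smaller pieces). For a tree with~$k \ge 5$ leaves, it strips \emph{all} leaves at once, applies induction on the resulting tree~$\bar\graphT$ on~$n+1-k$ vertices to find a pair at distance~$\ge 2(n-k)-18$, lifts this pair to tubings on~$\graphT$ by adjoining two specific chains of~$k$ tubes each, and then uses the surjection~$\surjection$ onto~$\flipGraph(\bar\graphT \sqcup \{\text{leaves}\})$ to show that any path between the lifted tubings must contain at least~$2k$ steps that become repetitions under~$\surjection$. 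This last step is the real work and requires a three-case analysis (including an appeal to the non-leaving-face property and to the permutahedral structure of a certain link). There is no way to shortcut it with a longest-path or largest-star face.
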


\begin{proof}
For the upper bound, we use that the diameter is non-decreasing (Theorem~\ref{theo:increasingDiameter2}) and that the $n$-dimensional permutahedron has diameter~$\binom{n+1}{2}$, the maximal number of inversions in a permutation of~$\fS_{n+1}$.

The lower bound consists in two parts. For the first part, we know that the normal fan of the graph associahedron~$\Asso(G)$ refines the normal fan of the graphical zonotope of~$\graphG$  (see \eg~\cite[Lect.~7]{Ziegler} for a reference on zonotopes). Indeed, the graph associahedron of~$\graphG$ can be constructed as a Minkowski sum of the faces of the standard simplex corresponding to tubes of~$\graphG$ (\cite{CarrDevadoss,Postnikov}) while the graphical zonotope of~$\graphG$ is the Minkowski sum of the faces of the standard simplex corresponding only to edges of~$\graphG$. Since the diameter of the graphical zonotope of~$\graphG$ is the number~$e$ of edges of~$\graphG$, we obtain that the diameter~$\diam{\graphG}$ is at least~$e$. For the second part of the lower bound, we use again Theorem~\ref{theo:increasingDiameter2} to restrict the argument to trees. Let~$\graphG[T]$ be a tree on $n+1$ vertices. We first discard some basic cases:
\begin{enumerate}[(i)]
\item If~$\graphG[T]$ has precisely two leaves, then~$\graphG[T]$ is a path and the graph associahedron~$\Asso(\graphG[T])$ is the classical $n$-dimensional associahedron, whose diameter is known to be larger than~$2n-4$ by L.~Pournin's result~\cite{Pournin}.
\item If~$\graphG[T]$ has precisely~$3$ leaves, then it consists in~$3$ paths attached by a $3$-valent node~$v$, see \fref{fig:decompositionTrees}\,(left). Let~$w$ be a neighbor of~$v$ and~$\pathG_1, \pathG_2$ denote the connected components of~$\graphG[T] \ssm w$. Observe that~$\pathG_1$ and~$\pathG_2$ are both paths and denote by~$p_1+1$ and~$p_2+1$ their respective lengths. Let~$\tubing'_1, \tubing''_1$ (resp.~$\tubing'_2, \tubing''_2$) be a diametral pair of maximal tubings on~$\pathG_1$ (resp.~on~$\pathG_2$), and consider the maximal tubings~$\tubing' = \tubing'_1 \cup \tubing'_2 \cup \{\pathG_1, \pathG_2\}$ and~$\tubing'' = \tubing''_1 \cup \tubing''_2 \cup \{\pathG_1, \pathG_2\}$ on the tree~$\graphG[T]$. Finally, denote by~$\tubing$ the maximal tubing on~$\graphG[T]$ obtained by flipping~$\pathG_1$ in~$\tubing'$. Since~$\{\pathG_1, \pathG_2\}$ is a common upper set to the inclusion posets of~$\tubing'$ and~$\tubing''$, Proposition~\ref{prop:upperSet2}\,(ii) ensures that there exists a geodesic from~$\tubing$ to~$\tubing''$ that starts by the flip from~$\tubing$ to~$\tubing'$. Moreover, Proposition~\ref{prop:upperSet2}\,(i) ensures that the distance between~$\tubing'$ and~$\tubing''$ is realized by a path staying in the face of~$\Asso(\graphG[T])$ corresponding to~$\{\pathG_1, \pathG_2\}$, which is the product of a classical $p_1$-dimensional associahedron by a classical $p_2$-dimensional associahedron. We conclude that
\[
\qquad \diam{\graphG[T]} \ge 1 + \diam{\pathG_1} + \diam{\pathG_2} \ge 1 + (2p_1 - 4) + (2p_2 - 4) = 2 (p_1 + p_2 + 2) - 11 = 2n - 11.
\]
\item If~$\graphG[T]$ has precisely~$4$ leaves, it either contains a single $4$-valent node~$v$ or precisely two $3$-valent nodes~$u,v$, see \fref{fig:decompositionTrees}\,(middle and right). Define~$w$ to be a neighbor of~$v$, not located in the path between~$u$ and~$v$ in the latter situation. Then~$w$ disconnects~$\graphG[T]$ into a path~$\pathG$ on~$p+1$ nodes and a tree~$\graphG[Y]$ with $y+1$ nodes and precisely~$3$ leaves. A similar argument as in~(ii) shows that
\[
\qquad \diam{\graphG[T]} \ge 1 + \diam{\pathG} + \diam{\graphG[Y]} \ge 1 + (2p - 4) + (2y - 11) = 2 (p + y + 2) - 18 = 2n - 18.
\]
\end{enumerate}
\begin{figure}
  \capstart
  \centerline{\includegraphics[scale=1]{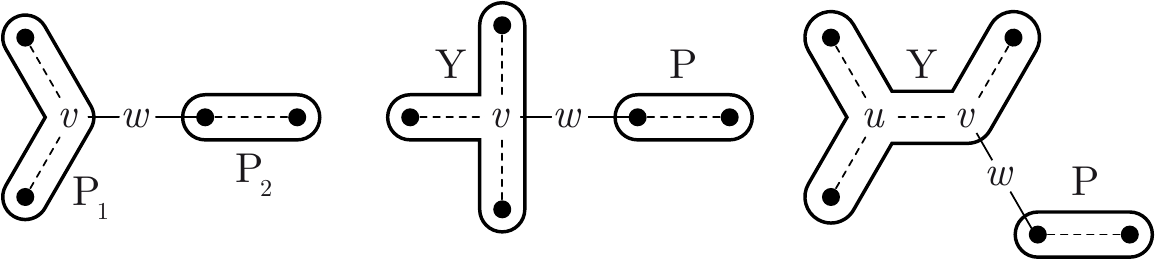}}
  \caption{Decompositions of trees with~$3$ or~$4$ leaves.}
  \label{fig:decompositionTrees}
\end{figure}

We can now assume that the tree~$\graphG[T]$ has~$k \ge 5$ leaves~$\leave_1, \dots, \leave_k$. Let~$\bar\ground=\ground \ssm \{\leave_1, \dots, \leave_k\}$ and ${\bar{\graphG[T]} = \graphG[T][\bar\ground]}$ denote the tree obtained by deletion of the leaves of~$\graphG[T]$. By induction hypothesis, there exists two maximal tubings~$\bar\tubing$ and~$\bar\tubing'$ on~$\bar{\graphG[T]}$ at distance at least~$2(n-k)-18$. Define~$\tube_i \eqdef \ground \ssm \{\leave_1, \dots, \leave_i\}$~for~${i \in [k]}$, and~$\tube'_j \eqdef \ground \ssm \{\leave_j, \dots, \leave_k\}$ for~$j \in [k]$. Consider the maximal tubings~${\tubing \eqdef \bar\tubing \cup \{\tube_1, \dots, \tube_k\}}$ and~${\tubing' \eqdef \bar\tubing' \cup \{\tube'_1, \dots, \tube'_k\}}$ on~$\graphG[T]$. We claim that the distance between these tubings is at least~${2n-18}$. To see it, consider the surjection~$\surjection$ from the tubings on~$\graphG[T]$ onto that of~$\bar{\graphG[T]} \sqcup \{\leave_1, \dots, \leave_k\}$ as defined in Section~\ref{subsec:nonDecreasingDiam}. It sends a path~$\tubing = \tubing_0, \dots, \tubing_\ell = \tubing'$ in the flip graph~$\flipGraph(\graphG[T])$ to a path 
\[
\bar\tubing \cup \{\{\leave_1\}, \dots, \{\leave_k\}\} = \surjection(\tubing_0), \dots, \surjection(\tubing_\ell) = \bar\tubing' \cup \{\{\leave_1\}, \dots, \{\leave_k\}\}
\]
 in the flip graph~$\flipGraph(\bar{\graphG[T]} \sqcup \{\leave_1, \dots, \leave_k\})$ with repeated entries. Since~$\bar\tubing$ and~$\bar\tubing'$ are at distance at least~$2(n-k)-18$ in the flip graph~$\flipGraph(\bar{\graphG[T]})$, this path has at least~$2(n-k)-18$ non-trivial steps, so we must show that it has at least~$2k$ repetitions. These repetitions appear whenever we flip a tube~$\tube_i$ or~$\tube'_j$. Indeed, we observe that the image~$\surjection(\tube)$ of any tube~$\tube \in \set{\tube_i}{i \in [k]} \cup \set{\tube'_j}{j \in [k]}$ is composed by~$\bar\ground$ together with single leaves of~$\graphG[T]$. Since all these tubes are connected components of~$\bar{\graphG[T]}$, we have~$\surjection(\tubing \ssm \{\tube\}) = \surjection(\tubing)$ for any maximal loaded tubing~$\tubing$ containing~$\tube$. To conclude, we distinguish three cases:
\begin{enumerate}[(i)]
\item If the tube~$\tube_k = \bar\ground = \tube'_1$ is never flipped along the path~$\tubing = \tubing_0, \dots, \tubing_\ell = \tubing'$, then we need at least~$\binom{k}{2}$ flips to transform~$\{\tube_1, \dots, \tube_k\}$ into~$\{\tube'_1, \dots, \tube'_k\}$. This can be seen for example from the description of the link of~$\tube_k$ in~$\nestedComplex(\graphG[T])$ in~\cite[Theorem~2.9]{CarrDevadoss}. Finally, we use that~$\binom{k}{2} \ge 2k$ since~$k \ge 5$.
\item Otherwise, we need to flip all~$\{\tube_1, \dots, \tube_k\}$ and then back all~$\{\tube'_1, \dots, \tube'_k\}$. If no flip of a tube~$\tube_i$ produces a tube~$\tube'_j$, we need at least~$2k$ flips which produces repetitions in~$\surjection(\tubing_0), \dots, \surjection(\tubing_\ell)$.
\item Finally, assume that we flip precisely once all~$\{\tube_1, \dots, \tube_k\}$ and then back all~$\{\tube'_1, \dots, \tube'_k\}$, and that a tube~$\tube_i$ is flipped into a tube~$\tube'_j$. According to the description of flips, we must have~${i = k-1}$ and~$j = 2$. If~$p \in [\ell]$ denotes the position such that~${\tubing_p \ssm \{\tube_{k-1}\} = \tubing_{p+1} \ssm \{\tube'_2\}}$, we moreover know that~$\tube_{k-1} \in \tubing_q$ for~$q \le p$, that~$\tube'_2 \in \tubing_q$ for~$q > p$, and that~${\bar\ground \in \tubing_p \cap \tubing_{p+1}}$. Applying the non-leaving-face property either to the upper set~$\{\tube_{k-1}, \tube_k\}$ in~$\Asso(\graphG{}[\tube_{k-1}])$ or to the upper set~$\{\tube'_1, \tube'_2\}$ in~$\Asso(\graphG{}[\tube'_2])$, we conclude that it would shorten the path~$\tubing_0, \dots, \tubing_\ell$ to avoid the flip of~$\tube_k = \bar\ground = \tube'_1$, which brings us back to Situation~(i). \qedhere
\end{enumerate}
\end{proof}

\begin{remark}
\label{rem:badLowerBound}
We note that although asymptotically optimal, our lower bound~$2n-18$ is certainly not sharp. We expect the correct lower bound to be the bound~$2n-4$ for the associahedron. Better upper bound can also be worked out for certain families of graphs. For example, L.~Pournin investigates the cyclohedra, \ie cycle associahedra. As far as trees are concerned, we understand better stars and their subdivisions. The diameter~$\diam{\completeG_{1,n}}$ for the star~$\completeG_{1,n}$ is exactly~$2n$ (for~$n \ge 5$), see \fref{fig:ctrexmNLFP}. In fact, the diameter of the graph associahedron of any starlike tree (subdivision of a star) on~$n+1$ vertices is bounded by~$2n$. To see it, we observe that any tubing is at distance at most~$n$ from the tubing~$\tubing_\circ$ consisting in all tubes adjacent to the central vertex. Indeed, we can always flip a tube in a tubing distinct from~$\tubing_\circ$ to create a new tube adjacent to the central vertex. This argument is not valid for non-starlike trees.
\end{remark}

\begin{remark}
The lower bound in Theorem~\ref{theo:boundsDiameter} shows that the diameter~$\diam{\graphG}$ is at least the number of edges of~$\graphG$. In view of Theorem~\ref{theo:increasingDiameter}, it is tempting to guess that the diameter~$\diam{\graphG}$ is of the same order as the number of edges of~$\graphG$. Adapting arguments from Remark~\ref{rem:badLowerBound}, we can show that the diameter of any tree associahedron~$\diam{\graphG[T]}$ is of order at most~$n \log n$. In any case, the following question remains open.
\end{remark}

\begin{question}
Is there a family of trees~$\graphG[T]_n$ on~$n$ nodes such that~$\diam{\graphG[T]_n}$ is of order~$n \log n$? Even more specifically, consider the family of trees illustrated in \fref{fig:3regularTree}: $\graphG[T]_1 = \graphG[K]_{1,3}$ (tripod) and~$\graphG[T]_{k+1}$ is obtained by grafting two leaves to each leaf of~$\graphG[T]_k$. What is the order of the diameter~$\diam{\graphG[T]_k}$?
\begin{figure}[h]
  \capstart
  \centerline{
    \begin{overpic}[width=\textwidth]{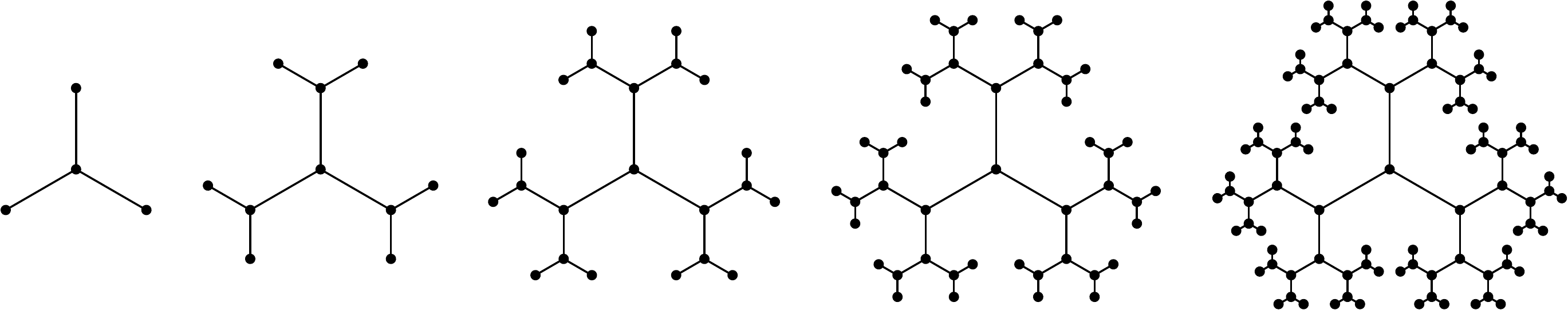}
  		\put(4,-3){$T_1$}
  		\put(19,-3){$T_2$}
  		\put(39,-3){$T_3$}
  		\put(62.5,-3){$T_4$}
  		\put(87.5,-3){$T_5$}
	\end{overpic}
  }
  \vspace{.3cm}
  \caption{The family of trees~$T_k$: the tree $T_1$ is the tripod and~$T_{k+1}$ is obtained from~$T_k$ by connecting two new nodes to each leaf of~$T_k$.}
  \label{fig:3regularTree}
\end{figure}
\end{question}

\begin{remark}
The upper bound~$\diam{\building} \le \binom{n+1}{2}$ holds for an arbitrary building set~$\building$ by Theorem~\ref{theo:increasingDiameter2} and the fact that the permutahedron is the nestohedron on the complete building set. In contrast, the lower bound is not valid for arbitrary connected building sets. For example, the nestohedron on the trivial connected building set~$\big\{ \{1\}, \dots, \{n+1\}, \{1, \dots, n+1\} \big\}$ is the $n$-dimensional simplex, whose diameter~is~$1$.
\end{remark}


\section{Hamiltonicity}
\label{sec:hamiltonicity}

In this section, we prove that the flip graph~$\flipGraph(\graphG)$ is Hamiltonian for any graph~$\graphG$ with at least~$2$ edges. This extends the result of H.~Steinhaus~\cite{Steinhaus}, S.~M.~Johnson~\cite{Johnson}, and H.~F.~Trotter~\cite{Trotter} for the permutahedron, and of J.~Lucas~\cite{Lucas} for the associahedron (see also~\cite{HurtadoNoy}). For all the proof, it is more convenient to work with spines than with tubings (remind Sections~\ref{subsec:spines} and~\ref{subsec:flips}). We first sketch the strategy of our proof.


\subsection{Strategy}
\label{subsec:strategy}

For any vertex~$v$ of~$\graphG$, we denote by~$\flipGraph_v(\graphG)$ the graph of flips on all spines on~$\graphG$ where~$v$ is a root. We call \defn{fixed-root subgraphs} of~$\flipGraph(\graphG)$ the subgraphs~$\flipGraph_v(\graphG)$ for~$v \in \ground$. Note that the fixed-root subgraph~$\flipGraph_v(\graphG)$ is isomorphic to the flip graph~$\flipGraph(\graphG{}[\hat v])$, where~$\graphG{}[\hat v]$ is the subgraph of~$\graphG$ induced by~$\hat v \eqdef \ground \ssm \{v\}$.

We now distinguish two extreme types of flips. Consider two maximal tubings~$\tubing, \tubing'$ on~$\graphG$ and tubes~$\tube \in \tubing$ and~$\tube' \in \tubing'$ such that~$\bar\tubing \eqdef \tubing \ssm \{\tube\} = \tubing' \ssm \{\tube'\}$. Let~$\spine, \spine'$ and~$\bar\spine$ denote the corresponding spines and~$\{v\} = \lab(\tube, \tubing)$ and~$\{v'\} = \lab(\tube', \tubing')$. We say that the flip~$\bar\tubing$ (or equivalently~$\bar\spine$) is
\begin{enumerate}[(i)]
\item a \defn{short flip} if both~$\tube$ and~$\tube'$ are singletons, that is, if $\{v,v'\}$ is a leaf of~$\bar\spine$;
\item a \defn{long flip} if~$\tube$ and~$\tube'$ are maximal proper tubes in~$\tubing$ and~$\tubing'$, that is, if~$\{v,v'\}$ is a root of~$\bar\spine$.
\end{enumerate}
Note that in a short flip, the vertices~$v,v'$ are necessarily adjacent in~$\graphG$. In the short flip~$\bar\spine$, we call \defn{short leaf} the leaf labeled by~$\{v,v'\}$ of~$\bar\spine$, \defn{short root} the root of the tree of~$\bar\spine$ containing the short leaf, and \defn{short child} the child~$w$ of the short root on the path to the short leaf. If the short leaf is already a child of the short root, then it coincides with the short child. Moreover, the short root, short child and short leaf all coincide if they form an isolated edge of~$\graphG$. In the long flip~$\bar\spine$, we call \defn{long root} the root labeled by~$\{v, v'\}$.

\enlargethispage{-.1cm}
We define a \defn{bridge} to be a square~$\bridge$ in the flip graph~$\flipGraph(\graphG)$ formed by two short and two long flips. We say that these two short (resp.~long) flips are~\defn{parallel}, and we borrow the terms long root and short leaf for the bridge~$\bridge$. \fref{fig:exmBridge} illustrates the notions of bridge, long flips and short flips.

\begin{figure}[b]
  \capstart
  \centerline{\includegraphics[width=\textwidth]{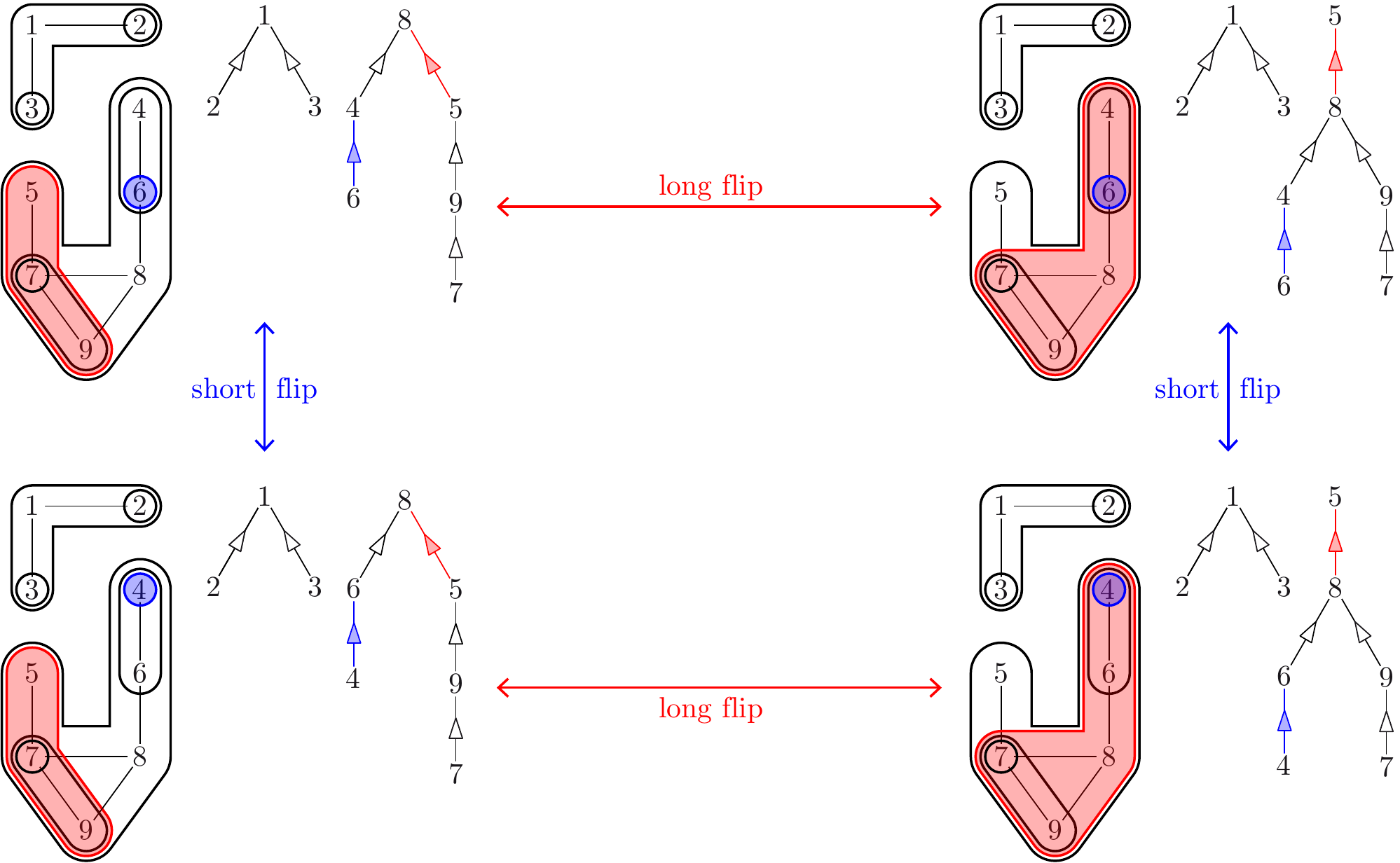}}
  \caption{A bridge, with two long flips (red) and two short flips (blue).}
  \label{fig:exmBridge}
\end{figure}

In terms of spines, a bridge can equivalently be defined as a spine~$\bridge$ of~$\graphG$ where all labels are singletons, except the label~$\{r,r'\}$ of a root and the label~$\{s,s'\}$ of a leaf. We denote by~$\bridge[r]$ the short flip of~$\bridge$ where~$r$ is a root, by~$\bridge[s]$ the long flip of~$\bridge$ where~$s$ is a leaf, and by~$\bridge[rs]$ the maximal spine on~$\graphG$ refining both~$\bridge[r]$ and~$\bridge[s]$, \ie where $r$ is a root and~$s$ a leaf. The flips~$\bridge[r']$ and~$\bridge[s']$ as well as the maximal spines~$\bridge[r's], \bridge[rs']$, and~$\bridge[r's']$ are defined similarly. These notations are summarized below

\bigskip
\centerline{\includegraphics[scale=.85]{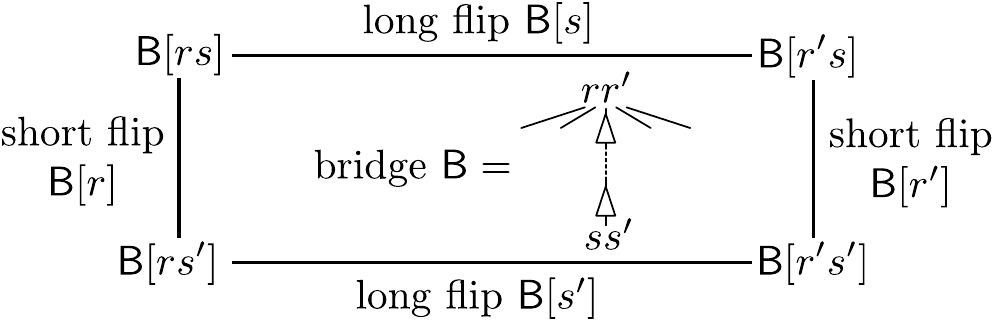}}

\bigskip
To obtain a Hamiltonian cycle~$\HamiltonianCycle$ of the flip graph~$\flipGraph(\graphG)$, we proceed as follows. The idea is to construct by induction a Hamiltonian cycle~$\HamiltonianCycle_{\hat v}$ of each flip graph~$\flipGraph(\graphG{}[\hat v])$, which is isomorphic to a Hamiltonian cycle~$\HamiltonianCycle_v$ in each fixed-root subgraph~$\flipGraph_v(\graphG)$. We then select an ordering~$v_1, \dots, v_{n+1}$ of~$\ground$,  such that two consecutive Hamiltonian cycles~$\HamiltonianCycle_{v_i}$ and~$\HamiltonianCycle_{v_{i+1}}$ meet the parallel short flips of a bridge~$\bridge_i$ for all~${i \in [n]}$. The Hamiltonian cycle of~$\flipGraph(\graphG)$ is then obtained from the union of the cycles~$\HamiltonianCycle_{v_1}, \dots, \HamiltonianCycle_{v_{n+1}}$ by exchanging the short flips with the long flips of all bridges~$\bridge_1, \dots, \bridge_n$, as illustrated in \fref{fig:strategy}.

\begin{figure}[h]
  \capstart
  \centerline{\includegraphics[width=.9\textwidth]{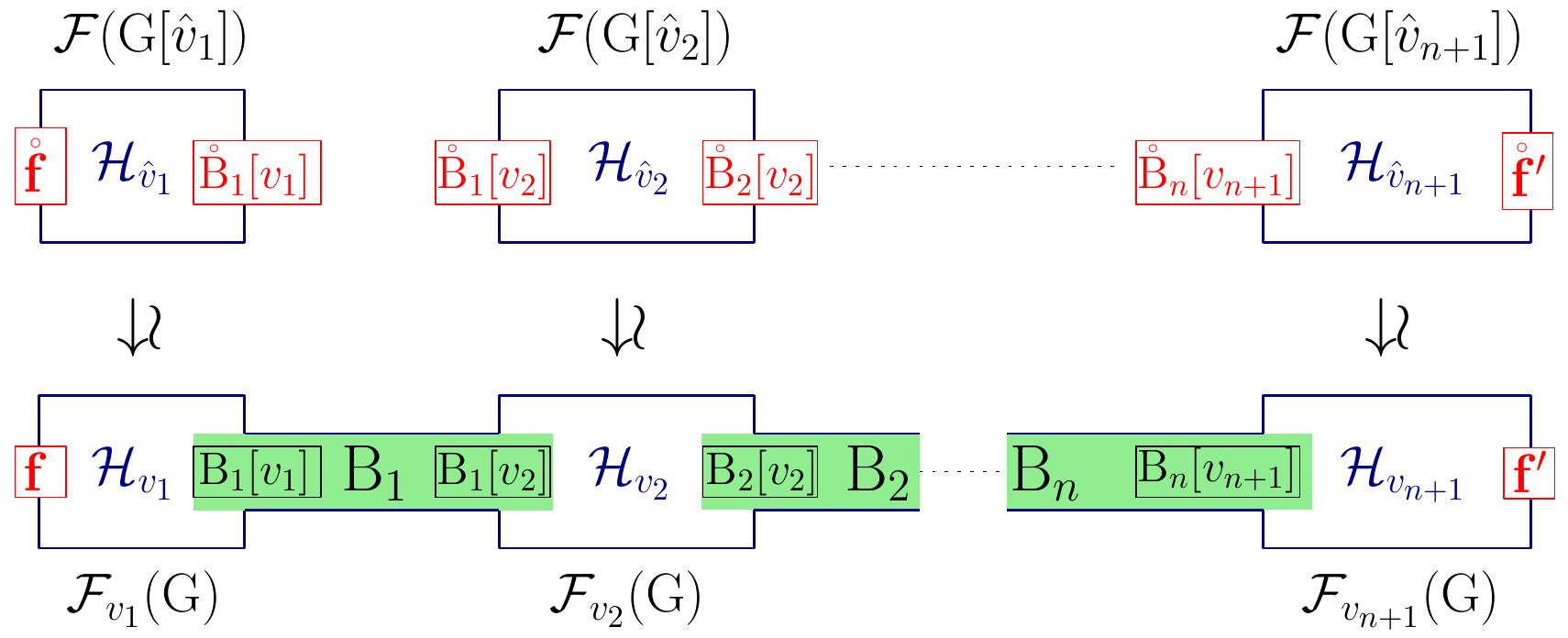}}
  \caption{The strategy for the proof of the hamiltonicity of the flip graph~$\flipGraph(\graphG)$. The circles above the short flips in the flip graphs~$\flipGraph(\graphG{}[\hat v_i])$ on top indicate that they are obtained by deleting the root~$v_i$ in the corresponding short flip of the fixed-root subgraph~$\flipGraph_{v_i}(\graphG)$ on bottom. See also Theorem~\ref{theo:Hamiltonian2}.}
  \label{fig:strategy}
\end{figure}

Of course, this description is a simplified and naive approach. The difficulty lies in that, given the Hamiltonian cycles~$\HamiltonianCycle_v$ of the fixed-root subgraphs~$\flipGraph_v(\graphG)$, the existence of a suitable ordering~$v_1, \dots, v_{n+1}$ of~$\ground$ and of the bridges~$\bridge_1, \dots, \bridge_n$ connecting the consecutive Hamiltonian cycles~$\HamiltonianCycle_{v_i}$ and~$\HamiltonianCycle_{v_{i+1}}$ is not guaranteed. To overpass this issue, we need to impose the presence of two forced short flips in each Hamiltonian cycle~$\HamiltonianCycle_v$. We include this condition in the induction hypothesis and prove the following sharper version of Theorem~\ref{theo:Hamiltonian}.

\begin{theorem}
\label{theo:Hamiltonian2}
For any graph~$\graphG$, any pair of short flips of~$\flipGraph(\graphG)$ with distinct short roots is contained in a Hamiltonian cycle of the flip graph~$\flipGraph(\graphG)$.
\end{theorem}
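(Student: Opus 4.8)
The plan is to prove Theorem~\ref{theo:Hamiltonian2} by induction on the number~$n+1$ of vertices of~$\graphG$, implementing the strategy described in Section~\ref{subsec:strategy}. First I would dispose of the disconnected case: since~$\flipGraph(\graphG)$ is then the Cartesian product of the flip graphs of the connected components of~$\graphG$ and a short flip involves only one component, it suffices to prove a short product lemma --- if two graphs each admit, through any prescribed edge, a Hamiltonian cycle (equivalently, a Hamiltonian path between the endpoints of that edge), then so does their Cartesian product, and one may even prescribe one edge per factor. The base cases are the graphs with~$\dim\Asso(\graphG)\le 2$, where~$\flipGraph(\graphG)$ is a single cycle (a pentagon for~$\pathG_3$, a square for two disjoint edges, a hexagon for~$\completeG_3$) and is therefore itself a Hamiltonian cycle through any pair of its edges; a small number of slightly larger graphs (the stars~$\completeG_{1,n}$, the tripod, the $3$-dimensional associahedron) may have to be checked by hand to launch the induction cleanly.

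For the inductive step, let~$\graphG$ be connected and large enough, and let~$\shortFlip$ and~$\shortFlip[g]$ be two short flips of~$\flipGraph(\graphG)$ with distinct short roots~$a$ and~$b$. I would use two facts: the short flips of~$\flipGraph(\graphG)$ with short root~$v$ are exactly those lying in the fixed-root subgraph~$\flipGraph_v(\graphG)\cong\flipGraph(\graphG{}[\hat v])$, and deleting the root~$v$ carries such a short flip to a short flip of~$\graphG{}[\hat v]$ whose short root is the corresponding \emph{short child}. The core of the argument is then to choose an ordering~$a=v_1,v_2,\dots,v_{n+1}=b$ of~$\ground$ together with bridges~$\bridge_1,\dots,\bridge_n$ so that~$\bridge_i$ has long root~$\{v_i,v_{i+1}\}$, and so that the short children at~$v_i$ of the two bridges~$\bridge_{i-1},\bridge_i$ incident to~$v_i$ are distinct for~$1<i<n+1$, the short child at~$v_1$ of~$\bridge_1$ differs from the short child of~$\shortFlip$, and the short child at~$v_{n+1}$ of~$\bridge_n$ differs from the short child of~$\shortFlip[g]$. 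Granting this, the induction hypothesis applied to each~$\graphG{}[\hat v_i]$ --- in which the prescribed short flips have, by the compatibility conditions, pairwise distinct short roots --- yields a Hamiltonian cycle~$\HamiltonianCycle_{v_i}$ of~$\flipGraph_{v_i}(\graphG)$ through the relevant short flips, containing~$\shortFlip$ when~$i=1$ and~$\shortFlip[g]$ when~$i=n+1$. The cycles~$\HamiltonianCycle_{v_1},\dots,\HamiltonianCycle_{v_{n+1}}$ partition the vertices of~$\flipGraph(\graphG)$ since every maximal tubing has a unique root, and performing the bridge surgery of Section~\ref{subsec:strategy} for each~$\bridge_i$ --- delete its two parallel short flips and insert its two long flips --- merges them into a single cycle, because the~$n$ bridges form a path on the~$n+1$ cycles; the resulting cycle is thus Hamiltonian, and since the compatibility conditions make every deleted short flip distinct from~$\shortFlip$ and~$\shortFlip[g]$, these two flips survive in it.

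The hard part will be the existence of a suitable ordering and bridges, and I expect essentially all the difficulty to concentrate there. Two phenomena must be controlled. First, a bridge with a prescribed long root~$\{v_i,v_{i+1}\}$ need not exist for an arbitrary consecutive pair; the ordering has to be constructed so that each consecutive pair admits a bridge (which imposes connectivity conditions on~$\graphG$ after deleting one of the two vertices) while still allowing the short-child conditions to be met at every step. Second, the induction hypothesis requires~$\graphG{}[\hat v_i]$ to have at least two edges, which fails exactly when deleting~$v_i$ leaves an almost edgeless graph --- the center of a star, or a near-universal vertex --- so that the fixed-root subgraph~$\flipGraph_{v_i}(\graphG)$ degenerates to a single vertex or a single edge and cannot carry its own Hamiltonian cycle. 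Such vertices have to be pushed to the ends of the ordering, or absorbed as transit vertices into a neighbouring cycle through two long flips instead of contributing a cycle of their own. Verifying that an ordering with all these constraints always exists --- and in particular handling the families where the degeneracies occur systematically, namely stars, starlike trees, and graphs with cut vertices --- is where the real work lies; the product lemma, the bridge surgery, and the survival of~$\shortFlip$ and~$\shortFlip[g]$ in the final cycle are then routine.
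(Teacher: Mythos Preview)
Your plan is essentially the paper's proof: the same fixed-root decomposition, the same bridge surgery, the same reduction to the connected case via Cartesian products, the same compatibility condition on short children to propagate the induction, and the same recognition that all the work concentrates in constructing the ordering and bridges (the paper's Lemmas~\ref{lem:ordering}, \ref{lem:horribleLemma} and~\ref{lem:horribleLemma2}, followed by a case analysis for~$|\ground|\le 6$).

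The one place where your sketch undersells the difficulty is the star case. You list the stars~$\completeG_{1,n}$ among graphs that ``may have to be checked by hand to launch the induction'', but there are infinitely many of them, so they cannot all be base cases; and for a star the fixed-root subgraph at the center~$\ast$ is a single \emph{vertex}~$\circledast$, not a single edge, so it cannot be absorbed by a bridge in the way you suggest (a bridge carries two long flips and hence needs two spines on each side of the long root). The paper's fix is to run a \emph{separate} induction on stars with a genuinely stronger hypothesis (Proposition~\ref{prop:HamiltonianCycleStars}): the Hamiltonian cycle of~$\flipGraph(\starG_\ground)$ is required to contain not only two prescribed short flips but also one prescribed long flip incident to~$\circledast$, and this extra long flip is exactly what lets~$\circledast$ be spliced between two neighbouring fixed-root cycles (see \fref{fig:herediteStar1}). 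Without this strengthening the induction on stars does not close, because deleting a leaf of a star yields another star and the need to pass through~$\circledast$ recurs. Everything else in your plan matches the paper.
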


Note that for any graph~$\graphG$ with at least~$2$ edges, the flip graph~$\flipGraph(\graphG)$ always contains two short flips with distinct short roots. Theorem~\ref{theo:Hamiltonian} thus follows from the formulation of Theorem~\ref{theo:Hamiltonian2}.

The issue in our inductive approach is that the fixed-root subgraphs of~$\flipGraph(\graphG)$ do not always contain two edges, and therefore cannot be treated by Theorem~\ref{theo:Hamiltonian2}. Indeed, it can happen that:
\begin{itemize}
\item $\graphG{}[\hat v]$ has a single edge and thus the fixed-root subgraph~$\flipGraph_v(\graphG) \sim \flipGraph(\graphG{}[\hat v])$ is reduced to a single (short) flip. This case can still be treated with the same strategy: we consider this single flip~$\flipGraph_v(\graphG)$ as a degenerate Hamiltonian cycle and we can concatenate two bridges containing this short flip.
\item $\graphG{}[\hat v]$ has no edge and thus the fixed-root subgraph~$\flipGraph_v(\graphG) \sim \flipGraph(\graphG{}[\hat v])$ is a point. This is the case when~$\graphG$ is a star with central vertex~$v$ together with some isolated vertices. We need to make a special and independent treatment for this particular case. See Section~\ref{subsec:stars}.
\end{itemize}


\subsection{Disconnected graphs}
\label{subsec:disconnected}

We first show how to restrict the proof to connected graphs using some basic results on products of cycles. We need the following lemmas. 

\begin{lemma}
\label{lem:disconnected1}
For any two cycles~$\HamiltonianCycle, \HamiltonianCycle'$ and any two edges~$e,e'$ of~$\HamiltonianCycle \times \HamiltonianCycle'$, there exists a Hamiltonian cycle of~$\HamiltonianCycle \times \HamiltonianCycle'$ containing both~$e$ and~$e'$.
\end{lemma}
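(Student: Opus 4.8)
The plan is to prove Lemma~\ref{lem:disconnected1} as the base case of the inductive strategy for disconnected graphs: we want that in any product of two cycles, any two prescribed edges lie on a common Hamiltonian cycle. Write $\HamiltonianCycle$ with vertices $a_1, \dots, a_p$ cyclically and $\HamiltonianCycle'$ with vertices $b_1, \dots, b_q$ cyclically, so that $\HamiltonianCycle \times \HamiltonianCycle'$ is the $p \times q$ toroidal grid. The two edges $e, e'$ each lie in one of two ``directions'': a \emph{horizontal} edge of the form $\{(a_i, b_j), (a_{i+1}, b_j)\}$ or a \emph{vertical} edge $\{(a_i, b_j), (a_i, b_{j+1})\}$. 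After relabeling the cyclic orders of $\HamiltonianCycle$ and $\HamiltonianCycle'$ (which is an automorphism of the product), we may normalize the positions of $e$ and $e'$.

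\textbf{Main construction.}
First I would handle the case where $e$ and $e'$ are both horizontal, say $e$ in row $j_0$ and $e'$ in row $j_1$ (possibly $j_0 = j_1$). The idea is to build a Hamiltonian cycle as a ``boustrophedon'' (snake) sweep: traverse row $b_1$ almost entirely horizontally, step up to row $b_2$, traverse it back, and so on, using one fixed vertical column (say column $a_p$) as the ``turnaround'' column on one side and alternating the other turnaround between columns $a_1$ and $a_2$ — the standard construction of a Hamiltonian cycle in a grid-like graph. This uses \emph{every} horizontal edge of all but one column-pair in each row, so by choosing which column hosts the vertical connectors we can arrange that both $e$ and $e'$ (being horizontal, in their respective rows) are used. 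When $e$ and $e'$ force conflicting choices — e.g.\ if a naive snake would need to route a vertical connector exactly where $e$ or $e'$ sits — I would locally reroute: near the offending row, shift the turnaround column by one, which is always possible since $p, q \ge 2$ (here $p, q \ge 2$ because $\HamiltonianCycle, \HamiltonianCycle'$ are cycles, hence have at least $3$ vertices, giving plenty of slack). The case where $e, e'$ are both vertical is symmetric by swapping the roles of the two factors.

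\textbf{The mixed case.}
The remaining case, where $e$ is horizontal and $e'$ is vertical, is the one I expect to be the main obstacle, since the snake sweep naturally prefers one global direction and a forced edge in the transverse direction breaks the pattern. The fix is to run the snake sweep in the direction \emph{transverse} to $e'$ — i.e.\ sweep so that $e'$ appears as one of the column-connector edges — which handles $e'$ automatically; then I would choose, among the many valid placements of the horizontal sweeping segments, one that uses the specific horizontal edge $e$. If $e$ lies in the same ``turnaround region'' as $e'$, one does a small local surgery: replace a $2\times 2$ sub-block of the cycle by the complementary pair of edges of that $4$-cycle (a standard ``square swap'' that preserves Hamiltonicity), which can toggle whether $e$ is included without disturbing $e'$. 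Throughout, I would keep the bookkeeping minimal by arguing via these two moves — global snake direction choice, and local square swaps — rather than writing explicit edge lists; the verification that each move preserves a single Hamiltonian cycle and that finitely many such moves suffice to capture any $(e, e')$ is routine given $p, q \ge 2$.
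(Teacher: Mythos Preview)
Your proposal is correct and follows essentially the same approach as the paper. The paper's own proof consists solely of a figure showing two snake-pattern Hamiltonian cycles in $\HamiltonianCycle \times \HamiltonianCycle'$ (drawn as $\HamiltonianCycle$ times the path obtained by deleting one edge of~$\HamiltonianCycle'$) together with the sentence ``the precise proof is left to the reader''; your boustrophedon sweep with local rerouting and square swaps is exactly this idea, spelled out in more detail than the paper bothers to. One small remark: the paper's framing---cut one edge of~$\HamiltonianCycle'$ and work in the resulting cylinder rather than the full torus---makes the closing-up of the snake automatic and slightly reduces the case analysis, so you may find it cleaner to adopt that viewpoint; also, be a little careful with the ``square swap'' move, since replacing two opposite sides of a $4$-cycle by the other two does not preserve connectedness of a Hamiltonian cycle in general, only when the two original edges are traversed with compatible orientations, which does hold in the snake patterns you use but deserves a word of justification.
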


\begin{proof}
The idea is illustrated in~\fref{fig:cartesianProductCycles}. The precise proof is left to the reader.
\begin{figure}[h]
  \capstart
  \centerline{\raisebox{.2cm}{\includegraphics[scale=.7]{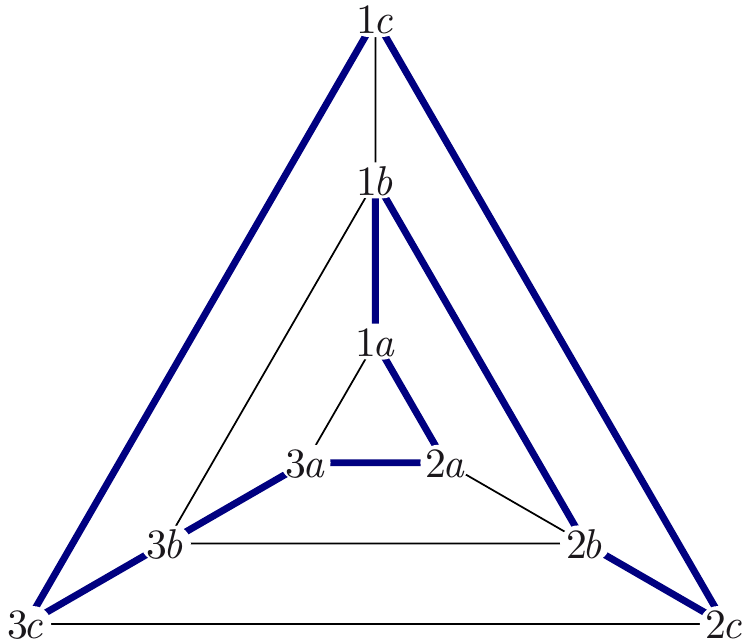}} \includegraphics[scale=.7]{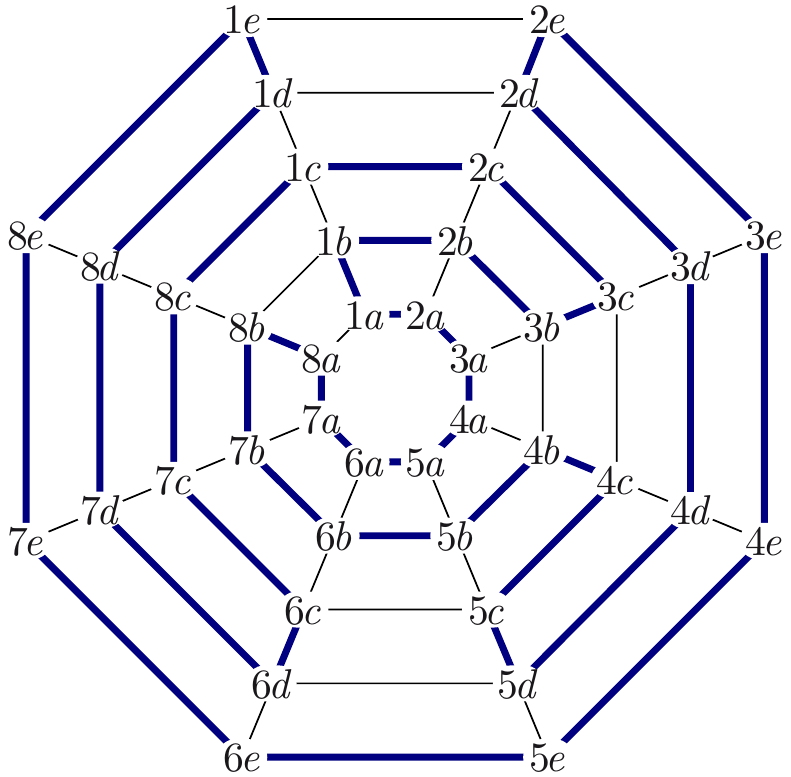}}
  \caption{An idea for the proof of Lemma~\ref{lem:disconnected1}. Any pair of edges is contained in a Hamiltonian cycle similar to those. The pictures represent Cartesian products of the cycle~$\HamiltonianCycle$ with the path obtained by deleting one edge in~$\HamiltonianCycle'$.}
  \label{fig:cartesianProductCycles}
\end{figure}
\end{proof}

\begin{lemma}
\label{lem:disconnected2}
For any cycle~$\HamiltonianCycle$, any isolated edge~$e_\circ$ and any two edges~$e,e'$ of~$\HamiltonianCycle \times e_\circ$, there exists a Hamiltonian cycle containing both~$e$ and~$e'$, as soon as one of the following conditions hold:
\begin{enumerate}
\item the edges~$e,e'$ are not both of the form~$\{v\} \times e_\circ$ with~$v \in \HamiltonianCycle$;
\item $e = \{v\} \times e_\circ$ and~$e' = \{v'\} \times e_\circ$ where~$\{v,v'\}$ is an edge of~$\HamiltonianCycle$;
\item $\HamiltonianCycle$ has an even number of edges.
\end{enumerate}
\end{lemma}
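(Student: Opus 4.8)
The statement is a purely graph-theoretic fact about Cartesian products $\HamiltonianCycle \times e_\circ$, where $e_\circ$ is a single edge (a $K_2$), so the product is a ``prism'' over the cycle $\HamiltonianCycle$. I will write $\HamiltonianCycle$ as a cyclic sequence of vertices $u_0, u_1, \dots, u_{m-1}, u_0$ and call its two copies in the prism the \emph{bottom} cycle and the \emph{top} cycle, the edges $\{u_i\}\times e_\circ$ being the \emph{rungs}. The edges of the prism split into bottom edges, top edges, and rungs. The plan is to handle the three hypotheses (1), (2), (3) in turn, in each case exhibiting an explicit Hamiltonian cycle of the prism through the two prescribed edges $e, e'$; most of these constructions are variations on the single template ``go around the bottom, cross a rung, go around the top, cross back'', so the bookkeeping is routine once the right rungs are chosen.

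\textbf{Case (1): not both of $e,e'$ are rungs.} Say $e$ is a bottom (or top) edge, $e = \{u_i, u_{i+1}\}\times\{x\}$. The generic Hamiltonian cycle of the prism uses exactly two rungs: pick rungs at positions $a$ and $b$; the cycle consists of the bottom arc from $u_a$ to $u_b$ \emph{not} passing through the ``short side'', one rung, the complementary top arc, and the other rung. By choosing the two rung positions appropriately one can force \emph{any} prescribed bottom edge and \emph{any} prescribed top edge to lie on the cycle, and also any single prescribed rung together with any one bottom or top edge. The only thing to check is that the required arcs are nonempty/consistent, which holds because $m \ge 2$; I will just display the two or three sub-configurations (two non-rungs on the same copy, two non-rungs on opposite copies, one non-rung and one rung) and point to the picture, exactly as Lemma~\ref{lem:disconnected1} does.

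\textbf{Case (2): $e = \{u_i\}\times e_\circ$ and $e' = \{u_{i+1}\}\times e_\circ$ are rungs at adjacent positions.} Now I want a Hamiltonian cycle using these two specific rungs. Take the bottom arc from $u_i$ all the way around to $u_{i+1}$ (the long way, through $u_{i-1}, u_{i-2}, \dots$), then the rung at $u_{i+1}$, then the top arc from $u_{i+1}$ the long way back to $u_i$, then the rung at $u_i$. This visits every vertex exactly once and contains both rungs, so we are done with no parity assumption. \textbf{Case (3): $\HamiltonianCycle$ has an even number $m$ of edges, and $e = \{u_i\}\times e_\circ$, $e' = \{u_j\}\times e_\circ$ are arbitrary rungs.} Here the adjacent-rung trick fails when $u_i, u_j$ are non-adjacent, because a Hamiltonian cycle of the prism through two fixed rungs must ``turn around'' and those turns force the two rung positions to have compatible parity relative to the arcs — this is the familiar obstruction that the prism over an \emph{odd} cycle is not Hamiltonian-connected in the strong sense we need. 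When $m$ is even one fixes it by a standard zig-zag: between the two chosen rungs, alternate rungs and short bottom/top steps so that the path covers both copies over that stretch, using the evenness of the number of intermediate positions to arrive on the correct copy; outside the stretch between $u_i$ and $u_j$, run straight along one copy. I will describe this zig-zag explicitly (``$u\to$ rung $\to u'\to$ rung $\to u''\to\cdots$'') and note that evenness is exactly what makes the alternation close up.

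\textbf{Main obstacle.} The only genuinely delicate point is Case (3): one must see precisely why evenness of $\HamiltonianCycle$ is needed and construct the zig-zag so that it visits every vertex exactly once while hitting both prescribed rungs, including the degenerate sub-cases where $u_i$ and $u_j$ are equal-or-adjacent (which reduce to Case (2)) or antipodal. Everything else is a routine enumeration of a handful of explicit cycles, best conveyed — as the authors do for Lemma~\ref{lem:disconnected1} — by a figure with the precise verification left to the reader.
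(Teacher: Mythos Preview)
Your treatment of Cases~(1) and~(2) via the ``two adjacent rungs'' Hamiltonian cycle of the prism is correct and is exactly what the paper does (its proof is just a figure plus ``left to the reader'').

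Your Case~(3), however, does not work as written. The construction you describe --- zig-zag over the arc between $u_i$ and $u_j$ so as to cover both copies there, then ``run straight along one copy'' over the complementary arc --- is not a Hamiltonian cycle: running straight along only one copy over the complementary arc leaves every vertex on the \emph{other} copy of that arc unvisited. Try $m=6$ with rungs at positions $0$ and $3$: after your zig-zag covers both copies of $u_0,u_1,u_2,u_3$, you still must visit $(u_4,x),(u_4,y),(u_5,x),(u_5,y)$, and a straight run on one copy misses two of them. The ``evenness of the number of intermediate positions'' you invoke does not help, since the two arc lengths have the \emph{same} parity when $m$ is even, and in particular can both be odd. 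The correct construction is simpler and more uniform than what you wrote: when $m$ is even, the \emph{full} zig-zag
\[
(u_0,x),\,(u_0,y),\,(u_1,y),\,(u_1,x),\,(u_2,x),\,(u_2,y),\,\dots,\,(u_{m-1},y),\,(u_{m-1},x),\,(u_0,x)
\]
is a Hamiltonian cycle of the prism that uses \emph{every} rung, hence it automatically contains any two prescribed rungs, with no case analysis on their relative position. This is precisely the ``right picture'' in the paper's figure, and the caption's remark that it ``only works for even cycles'' records exactly the fact that the closing step $(u_{m-1},x)\to(u_0,x)$ is a genuine edge of the prism only when $m$ is even.
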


\begin{proof}
The idea is illustrated in~\fref{fig:cartesianProductCycleEdge}. The precise proof is left to the reader.
\begin{figure}[h]
  \capstart
  \centerline{\includegraphics[width=.7\textwidth]{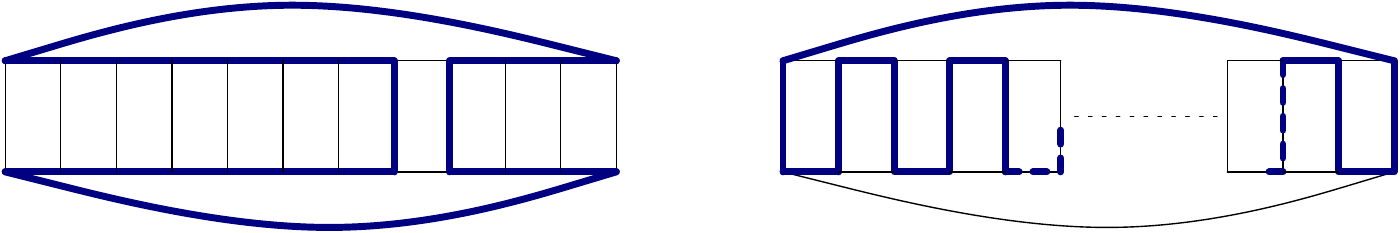}}
  \caption{An idea for the proof of Lemma~\ref{lem:disconnected2}. The right picture only works for even cycles.}
  \label{fig:cartesianProductCycleEdge}
\end{figure}
\end{proof}

\begin{corollary}
\label{coro:disconnected}
If two graphs~$\graphG, \graphG'$ both have the property that any pair of short flips of their flip graph with distinct short roots is contained in a Hamiltonian cycle of their flip graph, then~$\graphG \sqcup \graphG'$ fulfills the same property.
\end{corollary}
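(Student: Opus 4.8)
The plan is to use the product decomposition $\flipGraph(\graphG \sqcup \graphG') = \flipGraph(\graphG) \times \flipGraph(\graphG')$ recorded in Section~\ref{sec:preliminaries}, together with Lemmas~\ref{lem:disconnected1} and~\ref{lem:disconnected2}. First I would describe the short flips of $\flipGraph(\graphG \sqcup \graphG')$. Since every tube of $\graphG \sqcup \graphG'$ is entirely contained in $\graphG$ or entirely in $\graphG'$, a short flip of $\flipGraph(\graphG \sqcup \graphG')$ is either a short flip of $\flipGraph(\graphG)$ performed while a maximal tubing on $\graphG'$ is held fixed, or the symmetric object; call these \emph{$\graphG$-short flips} and \emph{$\graphG'$-short flips}. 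In the product a $\graphG$-short flip is an edge $e \times \{w\}$ with $e$ a short-flip edge of $\flipGraph(\graphG)$ and $w$ a vertex of $\flipGraph(\graphG')$, and its short root lies in $\graphG$ (the short leaf is a tube of $\graphG$, so the root of the spine-tree carrying it is a maximal tube of $\graphG$); symmetrically for $\graphG'$-short flips. Hence a pair of short flips of $\flipGraph(\graphG \sqcup \graphG')$ with distinct short roots is of one of three types: (a) two $\graphG$-short flips, whose underlying short flips of $\flipGraph(\graphG)$ then have distinct short roots; (b) two $\graphG'$-short flips, symmetrically; or (c) one $\graphG$-short flip $f_1 = e_1 \times \{w_1\}$ and one $\graphG'$-short flip $f_2 = \{v_2\} \times e_2'$.

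I also record an auxiliary fact: for a graph with at least two edges, \emph{every} short flip of its flip graph lies on a Hamiltonian cycle. Indeed, such a flip graph contains two short flips with distinct short roots (as noted after Theorem~\ref{theo:Hamiltonian2}), so at least two short roots occur; thus any prescribed short flip has a companion short flip with a different short root, and the hypothesis provides a Hamiltonian cycle through the two. Now suppose both $\graphG$ and $\graphG'$ have at least two edges. In type~(a) the hypothesis on $\graphG$ gives a Hamiltonian cycle $\HamiltonianCycle$ of $\flipGraph(\graphG)$ through the two underlying flips, and the hypothesis on $\graphG'$ gives any Hamiltonian cycle $\HamiltonianCycle'$ of $\flipGraph(\graphG')$; type~(b) is symmetric; in type~(c) the auxiliary fact gives Hamiltonian cycles $\HamiltonianCycle \ni e_1$ and $\HamiltonianCycle' \ni e_2'$. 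In all three types $f_1$ and $f_2$ are edges of the spanning subgraph $\HamiltonianCycle \times \HamiltonianCycle'$ of $\flipGraph(\graphG \sqcup \graphG')$, so Lemma~\ref{lem:disconnected1} yields a Hamiltonian cycle of $\HamiltonianCycle \times \HamiltonianCycle'$, hence of $\flipGraph(\graphG \sqcup \graphG')$, through both.

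It remains to treat the case where some factor has few edges; by the symmetry of $\sqcup$ assume $\graphG'$ has no more edges than $\graphG$. If $\graphG'$ has no edge, then $\flipGraph(\graphG \sqcup \graphG') = \flipGraph(\graphG)$ and the claim is the hypothesis on $\graphG$. If $\graphG'$ and $\graphG$ both have exactly one edge, then $\flipGraph(\graphG \sqcup \graphG')$ is a $4$-cycle, which contains all of its edges. Finally, if $\graphG'$ has exactly one edge $e_\circ$ while $\graphG$ has at least two, then $\flipGraph(\graphG \sqcup \graphG') = \flipGraph(\graphG) \times e_\circ$; here the unique short flip of $\flipGraph(\graphG')$ has a single short root, so two $\graphG'$-short flips never have distinct short roots, which rules out type~(b) and forces at least one of $f_1, f_2$ to be a $\graphG$-short flip, hence an edge $e \times \{w\}$ and in particular not of the form $\{v\} \times e_\circ$. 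Picking a Hamiltonian cycle $\HamiltonianCycle$ of $\flipGraph(\graphG)$ through the underlying $\graphG$-short flip(s) (hypothesis and auxiliary fact), clause~(1) of Lemma~\ref{lem:disconnected2} applied to $\HamiltonianCycle \times e_\circ$ then finishes the proof.

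The main obstacle is organizational rather than conceptual: one must verify that the two prescribed short flips always land on an applicable clause of Lemma~\ref{lem:disconnected2} and on genuinely available Hamiltonian cycles of the factors. The observation that keeps this under control is that when a factor is a single edge its short flips share one short root, so the configuration forbidden by Lemma~\ref{lem:disconnected2} --- two edges both of the form $\{v\} \times e_\circ$ --- cannot arise among flips with distinct short roots; thus clause~(1) of that lemma suffices here, and clauses~(2)--(3) are not needed for this corollary.
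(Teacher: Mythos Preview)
Your proof is correct and takes essentially the same approach as the paper: reduce to the product decomposition $\flipGraph(\graphG \sqcup \graphG') = \flipGraph(\graphG) \times \flipGraph(\graphG')$ and invoke Lemmas~\ref{lem:disconnected1} and~\ref{lem:disconnected2}. The paper's own proof is a two-line sketch that leaves all of your case analysis implicit; you have filled in exactly the details (how short roots sit in each factor, why type~(b) cannot occur when one factor is a single edge, and which clause of Lemma~\ref{lem:disconnected2} applies) that the paper elides.
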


\begin{proof}
We have seen that the flip graph of the disjoint union of two graphs~$\graphG_1$ and~$\graphG_2$ is the product of their flip graphs~$\flipGraph(\graphG_1)$ and~$\flipGraph(\graphG_2)$. The statement thus follows from the previous lemmas.
\end{proof}


\subsection{Generic proof}
\label{subsec:proof}

We now present an inductive proof of Theorem~\ref{theo:Hamiltonian2}. Corollary~\ref{coro:disconnected} allows us to restrict to the case where~$\graphG$ is connected. For technical reasons, the stars and the graphs with at most~$6$ vertices will be treated separately. We thus assume here that~$\graphG$ is not a star and has at least $7$ vertices, which ensures that any fixed root subgraph of the flip graph~$\flipGraph(\graphG)$ has at least one short flip. Fix two short flips~$\shortFlip, \shortFlip'$ of~$\flipGraph(\graphG)$ with distinct short roots~$v_1, v_{n+1}$ respectively.

We follow the strategy described in Section~\ref{subsec:strategy} and illustrated in \fref{fig:strategy}.
To apply Theorem~\ref{theo:Hamiltonian2} by induction on~$\graphG{}[\hat v_i]$, the short flips~$\bridge_{i-1}[v_i]$ and~$\bridge_i[v_i]$ should have distinct short children.
This forbids certain positions for~$v_{i+1}$ in~$\bridge_{i-1}[v_i]$ illustrated in \fref{fig:conflict}, and motivates the following definition. We say that a vertex~$w$ and a short flip~$\shortFlip[g]$ with root~$v$ are \defn{in conflict} if either of the following happens:
\begin{enumerate}[(A)]
\item $\{w\}$ is the short child of~$\shortFlip[g]$ and all other children of~$v$ in~$\shortFlip[g]$ are isolated in~$\graphG{}[\hat v]$; \label{cond:conflictA}
\item the graph~$\graphG{}[\hat v]$ has at least three edges, the graph~$\graphG{}\ssm\{v,w\}$ has exactly one edge which is the short leaf of~$\shortFlip[g]$; \label{cond:conflictB}
\item the graph~$\graphG{}[\hat v]$ has exactly two edges, the graph~$\graphG{}\ssm\{v,w\}$ has exactly one edge which is the short leaf of~$\shortFlip[g]$, and $w$ is a child of~$v$. \label{cond:conflictC}
\end{enumerate}
It is immediate that a short flip is in conflict with at most one vertex. Observe also that if~$w$ is in the short leaf of~$\shortFlip[g]$, then $w$ and~$\shortFlip[g]$ cannot be in conflict.

\begin{figure}[h]
  \capstart
  \begin{overpic}[scale=1]{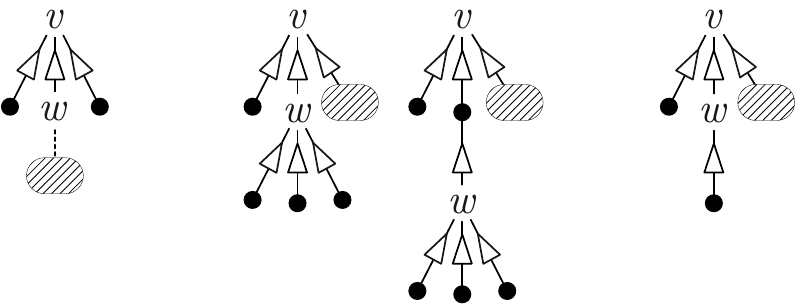}
  	\put(3,-6){(A)}
  	\put(45,-6){(B)}
  	\put(87,-6){(C)}
  \end{overpic}
  \bigskip
  \caption{Short flips in conflict with vertex~$w$. The short leaf is shaded. The second short flip of Case~(B) is in conflict with~$w$ only if the connected component of~$\graphG{}[\hat v]$ containing~$w$ is a star with central vertex~$w$.}
  \label{fig:conflict}
\end{figure}

\bigskip
We now show how we order the vertices~$v_1, \dots, v_{n+1}$ such that for each~$i \in [n]$ there exists a bridge~$\bridge_i$ connecting the fixed-root subgraphs~$\flipGraph_{v_i}(\graphG)$ and~$\flipGraph_{v_{i+1}}(\graphG)$.

\begin{lemma}
\label{lem:ordering}
There exists an ordering~$v_1, \dots, v_{n+1}$ of the vertices of~$\graphG$ (provided~$|\ground| \ge 7$) satisfying the following properties:
\begin{itemize}
\item $v_2$ and~$\shortFlip$ are not in conflict, and~$v_n$ and~$\shortFlip'$ are not in conflict, and
\item for any~$i \in [n]$, the graph~$\graphG$ contains an edge disjoint from~$\{v_i,v_{i+1}\}$.
\end{itemize}
\end{lemma}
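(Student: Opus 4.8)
The plan is to reformulate the requirement as a condition on an auxiliary graph and then extract the desired ordering from a Hamiltonian-type path in that auxiliary structure. First I would encode the second bullet combinatorially: the set of vertices~$\ground$ carries a symmetric relation where~$v \sim w$ is \emph{forbidden} precisely when every edge of~$\graphG$ meets~$\{v,w\}$, i.e.\ when~$\{v,w\}$ is a vertex cover of~$\graphG$ (equivalently~$\graphG \ssm \{v,w\}$ has no edge). Since~$\graphG$ is connected with~$|\ground| = n+1 \ge 7$ and is not a star, such covering pairs are rare; in fact one checks that the forbidden pairs form a graph on~$\ground$ of very small size --- typically empty, or a single edge, or a small star --- because if~$\{v,w\}$ covers~$\graphG$ then every edge of~$\graphG$ is incident to~$v$ or~$w$, so~$\graphG$ is contained in the union of the two stars at~$v$ and~$w$. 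I would make a short case analysis of which connected non-star graphs on~$\ge 7$ vertices admit covering pairs and verify that the complement of the forbidden-pair graph is Hamiltonian-connected, or at least has a Hamiltonian path between any two prescribed endpoints, with enough slack to also control the second and second-to-last vertices.

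The second step handles the two conflict conditions at the ends. The flips~$\shortFlip$ and~$\shortFlip'$ each conflict with at most one vertex (as noted right after the definition of conflict), call them~$c$ and~$c'$; I need the ordering~$v_1, \dots, v_{n+1}$ to begin with~$v_1$ being the short root of~$\shortFlip$ and end with~$v_{n+1}$ being the short root of~$\shortFlip'$ --- these are the prescribed data --- while avoiding~$v_2 = c$ and~$v_n = c'$. So the real task is: in the complement of the (tiny) forbidden-pair graph, find a Hamiltonian path from the prescribed~$v_1$ to the prescribed~$v_{n+1}$ whose second vertex is not~$c$ and whose penultimate vertex is not~$c'$. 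Since removing a bounded number of edges from the complete graph~$\completeG_{n+1}$ (for~$n+1 \ge 7$) leaves a graph that is still highly connected --- Hamiltonian-connected, indeed Hamiltonian-connected after deleting any bounded-degree subgraph --- this is a routine application of a Chvátal--Erd\H{o}s or Ore-type degree argument, together with the observation that we have at least~$n-1 \ge 5$ choices for~$v_2$ and only need to dodge one bad value. I would spell this out by building the path greedily from both ends, or by citing the standard fact that~$\completeG_m$ minus a graph of maximum degree at most~$m-4$ is Hamiltonian-connected.

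The main obstacle I anticipate is \emph{not} the generic case but pinning down the handful of small or structured graphs where covering pairs genuinely obstruct some orderings: e.g.\ the ``double star'' (two adjacent centers with all other vertices pendant on one of them), or graphs that are a star plus one extra edge, where the forbidden-pair graph can be a star on several vertices rather than a single edge. For such~$\graphG$ one must check by hand that the prescribed endpoints~$v_1, v_{n+1}$ (which are short roots, hence themselves constrained --- a short root is an endpoint of an edge not lying in any larger tube structure, so it is typically a leaf or low-degree vertex of~$\graphG$) can still be joined by the required path; the hypothesis~$|\ground| \ge 7$ is exactly what is needed to guarantee enough ``free'' vertices in the middle to absorb the constraints. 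I would organize this as: (1) bound the forbidden-pair graph, (2) dispatch the generic case by a connectivity lemma, (3) enumerate and dispatch the finitely many exceptional shapes, checking the endpoint and conflict conditions directly in each.
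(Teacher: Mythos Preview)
Your approach is essentially the same as the paper's: define the set of ``forbidden pairs'' (what the paper calls totally disconnecting pairs~$D$), show they are few, and then arrange the ordering to avoid placing any forbidden pair, or either of the at most two conflict vertices, in a bad consecutive position. The paper, however, proves the sharper structural fact that $|D| \le 2$ and that the two pairs (if both exist) share a vertex --- so the forbidden-pair graph is never a ``star on several vertices'' as you worried, but at worst a path of length~$2$. With this bound in hand, there are at most four pairs total that must be kept non-consecutive (two from~$D$, plus $\{v_1,a\}$ and $\{a',v_{n+1}\}$ from the conflicts), and for $n+1 \ge 7$ a direct hand construction suffices; no Ore-type or Hamiltonian-connectivity machinery is needed. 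Your parenthetical that short roots are ``typically a leaf or low-degree vertex of~$\graphG$'' is not correct --- any vertex can be a short root --- but this does not affect the argument since $v_1$ and $v_{n+1}$ are simply given.
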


\begin{proof}
Let~$a$ and~$a'$ denote the vertices in conflict with~$\shortFlip$ and~$\shortFlip'$ if any. Let~$D$ denote the set of \defn{totally disconnecting pairs} of~$\graphG$, \ie of pairs~$\{x,y\}$ such that~$\graphG \ssm \{x,y\}$ has no edge. We want to show that there exists an ordering on the vertices of~$\graphG$ in which neither~$\{v_1,a\}$ nor~$\{a',v_{n+1}\}$, nor any pair of~$D$ are consecutive. For this, we prove that if~$\graphG$ has at least~$5$ vertices and is not a star (\ie all edges contain a central vertex), then~$|D| \le 2$ and the pairs in~$D$ are not disjoint.

Suppose by contradiction that~$D$ contains two disjoint pairs~$\{x_1,y_1\}$ and~$\{x_2,y_2\}$. Then any edge of~$\graphG$ intersects both pairs, so that~$x_1,x_2,y_1,y_2$ are the only vertices in~$\graphG$ (by connectivity), contradicting that~$\graphG$ has at least~$5$ vertices. Suppose now that~$D$ contains three pairwise distinct pairs~$\{x,y_1\}$, $\{x,y_2\}$ and~$\{x,y_3\}$. Then any edge of~$\graphG$ contains~$x$ since it cannot contain~$y_1, y_2$ and~$y_3$ together. It follows that~$\graphG$ is a star with central vertex~$x$.

Since~$|D| \le 2$, at most $4$ pairs of vertices of~$\graphG$ cannot be consecutive in our ordering. It is thus clear that if there are enough other vertices, we can find a suitable ordering. In fact, it turns out that it is already possible as soon as~$\graphG$ has~$7$ vertices. It is easy to prove by a boring case analysis. We just treat the worst case below.

Assume that~$D = \{\{x,y\}, \{x,z\}\}$ where~$x,y,z \notin \{v_1, v_{n+1}\}$ and that~$x$ is in conflict with both short flips~$\shortFlip$ and~$\shortFlip'$. Since~$|\ground| \ge 7$, there exists two distinct vertices~${u,v \notin \{v_1, v_{n+1}, x, y, z\}}$ and we set~$v_2=z, v_3=y, v_4=u, v_5=x, v_6=v$ and choose any ordering for the remaining vertices. This order satisfies the requested conditions.
\end{proof}

\begin{remark}
\label{rem:ordering}
In fact, using similar arguments, one can easily check that the result of Lemma~\ref{lem:ordering} holds in the following situations:
\begin{itemize}
\item $|\ground| = 6$, and either $|D| \le 1$ or $D = \{\{x,y\}, \{x,z\}\}$ where $x$ is not in conflict with both~$\shortFlip, \shortFlip'$.
\item $|\ground| = 5$, and either $D = \varnothing$ or $D = \{\{x,y\}\}$ where neither~$x$ nor~$y$ is in conflict~with~both~$\shortFlip, \shortFlip'$.
\item $|\ground| = 5$, and $D=\{\{x,y\},\{x,z\}\}$ and~$|\{x,y,z\} \cap \{v_1,v_5\}| = 2$.
\item $|\ground| = 5$, and $D=\{\{x,y\},\{x,z\}\}$ and~$|\{x,y,z\} \cap \{v_1,v_5\}| = 1$ and neither of~$x,y,z$ is in conflict with any of~$\shortFlip$ and~$\shortFlip'$.
\end{itemize}
\end{remark}

Given such an ordering~$v_1, \dots, v_{n+1}$, we choose bridges~$\bridge_1, \dots, \bridge_n$ connecting the fixed-root subgraphs~$\flipGraph_{v_1}(\graphG), \dots, \flipGraph_{v_{n+1}}(\graphG)$. We start with the choice of~$\bridge_1$.

\begin{lemma}
\label{lem:horribleLemma}
There exists a bridge~$\bridge_1$ with root~$\{v_1,v_2\}$ such that
\begin{itemize}
\item if~$\flipGraph_{v_1}(\graphG)$ is a square, the short flips~$\shortFlip$ and~$\bridge_1[v_1]$ are distinct,
\item if~$\flipGraph_{v_1}(\graphG)$ is not reduced to a single flip nor to a square, the short flips~$\shortFlip$ and~$\bridge_1[v_1]$ have distinct short children,
\item $\bridge_1[v_2]$ and~$v_3$ are not in conflict, and
\item the singleton~$\{v_3\}$ is a child of~$v_2$ in~$\bridge_1[v_2]$ only if~$v_3$ is isolated in~$\graphG \ssm \{v_1, v_2\}$.
\end{itemize}
\end{lemma}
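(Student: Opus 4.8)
The plan is to construct the bridge $\bridge_1$ directly by choosing its long root and its short leaf, subject to the four bulleted constraints. Recall that a bridge $\bridge_1$ with root $\{v_1,v_2\}$ is determined by a spine $\bridge_1$ on $\graphG$ in which all labels are singletons except the root label $\{v_1,v_2\}$ and one leaf label $\{s,s'\}$ (the short leaf), the short leaf being forced to lie in a different tree of $\bridge_1[v_2]$ when $\graphG\ssm\{v_1\}$ is disconnected, but we have reduced to $\graphG$ connected. So the freedom we have is: (a) the shape of the spine of $\graphG\ssm\{v_1,v_2\}$ (equivalently, the maximal tubing on $\graphG\ssm\{v_1,v_2\}$ refining $\bridge_1[v_1v_2]$), and (b) which edge of $\graphG\ssm\{v_1,v_2\}$ we pick as short leaf (recall that in a short flip the two vertices of the short leaf must be adjacent in $\graphG$, hence the short leaf is supported on an edge). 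First I would translate each of the four conditions into a combinatorial requirement on these choices.

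The first two conditions are requirements on $\bridge_1[v_1]$ — a short flip of the fixed-root subgraph $\flipGraph_{v_1}(\graphG)\cong\flipGraph(\graphG{}[\hat v_1])$ — relative to the given short flip $\shortFlip$, whose short root is $v_1$. Here $\bridge_1[v_1]$ is the short flip of $\bridge_1$ in which $v_1$ is a root; its short leaf is $\{s,s'\}$ and its short root and short child live in $\graphG{}[\hat v_1]$. The second condition asks that $\shortFlip$ and $\bridge_1[v_1]$ have distinct short children in $\graphG{}[\hat v_1]$, which is a constraint only on the child of the short root of $\bridge_1[v_1]$ on the path to $\{s,s'\}$; since $\graphG{}[\hat v_1]$ has (under the standing hypotheses $|\ground|\ge 7$, $\graphG$ not a star) enough edges, one can always pick the short leaf of $\bridge_1$ so that this short child differs from the short child of $\shortFlip$, and likewise handle the degenerate square case by just picking $\{s,s'\}$ not equal to the short leaf of $\shortFlip$. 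The last two conditions constrain $\bridge_1[v_2]$ near the root $v_2$: condition three forbids $v_3$ from sitting in one of the conflict positions of Figure~\ref{fig:conflict} relative to the short flip $\bridge_1[v_2]$, and condition four forbids $\{v_3\}$ from being a child of $v_2$ unless $v_3$ is isolated in $\graphG\ssm\{v_1,v_2\}$. Both can be arranged by choosing the spine of $\graphG\ssm\{v_1,v_2\}$ appropriately: e.g.\ place $v_3$ deep inside an already-large subtree so that it is neither the unique short child nor a child of $v_2$, which is possible because, by the ordering produced in Lemma~\ref{lem:ordering}, $\{v_1,v_2\}$ is not a totally disconnecting pair, so $\graphG\ssm\{v_1,v_2\}$ has an edge to build such a subtree from.

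The key technical point — and I expect the main obstacle — is that these two groups of conditions interact through the single object $\bridge_1$: the short leaf $\{s,s'\}$ must simultaneously be an edge of $\graphG\ssm\{v_1,v_2\}$ whose removal leaves the right short-child structure on the $v_1$-side, \emph{and} the spine of $\graphG\ssm\{v_1,v_2\}$ must avoid the conflict/child positions for $v_3$ on the $v_2$-side. One must check that the case analysis does not run out of room, and this is exactly where the hypotheses $|\ground|\ge 7$ and ``$\graphG$ not a star'' and the conflict-avoidance built into the ordering are used; the fact (established in the proof of Lemma~\ref{lem:ordering}) that $\graphG$ has at most two totally disconnecting pairs, not disjoint, guarantees that after removing $\{v_1,v_2\}$ there is still enough of $\graphG$ left to maneuver. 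I would carry this out by first fixing a spanning subtree structure on $\graphG\ssm\{v_1,v_2\}$ realizing the $v_3$-conditions, then choosing $\{s,s'\}$ among the edges of $\graphG\ssm\{v_1,v_2\}$ not incident to the top of that structure so that the $v_1$-side short child condition holds, treating separately the small cases where $\flipGraph_{v_1}(\graphG)$ is a single flip or a square (where the conditions on $\bridge_1[v_1]$ weaken, as reflected in the statement), and finally reading off that $\bridge_1$ built this way satisfies all four bullets. The bookkeeping is routine once the geometry of spines from Section~\ref{subsec:spines} is in hand, so I would present it as a short case check rather than a computation.
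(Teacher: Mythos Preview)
Your high-level intention --- construct $\bridge_1$ directly by choosing a short leaf and the rest of the spine so as to satisfy the four bullets --- is exactly what the paper does. But the concrete two-step plan you propose (first fix a spine on $\graphG\ssm\{v_1,v_2\}$ to handle the $v_3$-conditions, then pick $\{s,s'\}$ among the remaining edges to handle the $v_1$-condition) has a genuine gap, and the claim that the residual case check is ``short'' or ``routine'' is not borne out.

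The issue is that the short child of $\bridge_1[v_1]$ is \emph{not} controlled by where $\{s,s'\}$ sits inside the spine of $\graphG\ssm\{v_1,v_2\}$; it is controlled by which connected component of $\graphG{}[\hat v_1]$ contains $\{s,s'\}$. If $\kappa$ denotes the component of $\graphG{}[\hat v_1]$ containing $v_2$, then whenever the short leaf lands in $\kappa$ the short child of $\bridge_1[v_1]$ is forced to be $v_2$, independently of how ``deep'' you placed it. So your step~2 cannot in general adjust the short child after step~1 is fixed. This is precisely why the paper's proof is organized around $\kappa$ and the edge count of $\graphG{}[\hat v_1]\ssm\kappa$, and why the three conflict clauses (A), (B), (C) from the ordering lemma are invoked at specific branches rather than as a blanket ``enough room'' argument: clause~(A) is what saves the case where $\graphG{}[\hat v_1]\ssm\kappa$ has no edge (so the short child of $\bridge_1[v_1]$ is forced to be $v_2$, and one must know $v_2$ is not the short child of $\shortFlip$); clause~(B) is what guarantees $\kappa\ssm\{v_2\}$ still has an edge in the delicate sub-case where $\graphG{}[\hat v_1]\ssm\kappa$ has a unique edge equal to the short leaf of $\shortFlip$; clause~(C) handles the square case. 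Your sketch does not identify or use any of these.

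Similarly, ``place $v_3$ deep inside an already-large subtree'' is not always available: $v_3$ may sit in a component of $\graphG\ssm\{v_1,v_2\}$ with no edge, or be forced near the root by the graph structure. The paper instead sometimes satisfies the $v_3$-conditions by putting $v_3$ \emph{into the short leaf itself}, which is the opposite of burying it. In short, the two groups of constraints are not decoupled in the order you suggest; the paper's nested case analysis (roughly ten leaf cases keyed on $\kappa$, on the edges of $\graphG{}[\hat v_1]\ssm\kappa$, and on the position of $v_3$) is the actual content of the lemma, and your outline does not substitute for it.
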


\begin{proof}
The proof is an intricate case analysis. In each case, we will provide a suitable choice for~$\bridge_1$, but the verification that this bridge exists and satisfies the conditions of the statement is immediate and left to the reader. We denote by~$\kappa$ the connected component of~$\graphG{}[\hat v_1]$ containing~$v_2$. The following cases cover all possibilities:
\begin{enumerate}[$\spadesuit$]
\item $\kappa = \{v_2\}$: 
	\begin{enumerate}[$\heartsuit$]
	\item $\graphG \ssm \{v_1, v_2\}$ has only one edge: the fixed root subgraph~$\flipGraph_{v_1}(\graphG)$ is reduced to the short flip~$\shortFlip$ and the bridge obtained by contracting~$\{v_1, v_2\}$ in~$\shortFlip$ suits for~$\bridge_1$.
	\item $\graphG \ssm \{v_1, v_2\}$ has at least two edges: we choose for~$\bridge_1$ any bridge with root~$\{v_1, v_2\}$ and with a short child different from that of~$\shortFlip$.
	\end{enumerate}
\item $\kappa \ne \{v_2\}$, so that~$\kappa$ has at least one edge:
	\begin{enumerate}[$\heartsuit$]
	\item $\graphG{}[\hat v_1] \ssm \kappa$ has no edge: Condition~\eqref{cond:conflictA} on~$\shortFlip$ and~$v_2$ ensures that~$v_2$ is not the short child of~$\shortFlip$. Since the short leaf of~$\shortFlip$ has to be in~$\kappa$, the short children of~$\shortFlip$ and~$\bridge_1[v_1]$ will automatically be different.
		\begin{enumerate}[$\diamondsuit$]
		\item $v_3 \notin \kappa$: any bridge with root~$\{v_1,v_2\}$ suits for~$\bridge_1$.
		\item $v_3 \in \kappa$:
			\begin{enumerate}[$\clubsuit$]
			\item $v_3$ is isolated in~$\kappa \ssm \{v_2\}$: any bridge with root~$\{v_1,v_2\}$ suits for~$\bridge_1$.
			\item $v_3$ is not isolated in~$\kappa \ssm \{v_2\}$: we choose for~$\bridge_1$ a bridge with root~$\{v_1,v_2\}$ and whose short leaf contains~$v_3$.
			\end{enumerate}
		\end{enumerate}
	\item $\graphG{}[\hat v_1] \ssm \kappa$ has precisely one edge~$e$:
		\begin{enumerate}[$\diamondsuit$]
		\item $e$ is not the short leaf of~$\shortFlip$: we choose for~$\bridge_1$ any bridge with root~$\{v_1,v_2\}$, short leaf~$e$ and in which~$\{v_3\}$ is a child of the root only if it is isolated in~$\graphG \ssm \{v_1,v_2\}$.
		\item $e$ is the short leaf of~$\shortFlip$:
			\begin{enumerate}[$\clubsuit$]
			\item $\kappa$ is a single edge: we choose for~$\bridge_1$ the bridge obtained by contracting~$\{v_1, v_2\}$ in the short flip opposite to~$\shortFlip$ in the square~$\flipGraph_{v_1}(\graphG)$ (which suits by Condition~\eqref{cond:conflictC}).
			\item $\kappa$ has at least two edges: Condition~\eqref{cond:conflictB} ensures that~$\kappa \ssm \{v_2\}$ has at least one edge.
				\begin{itemize}
				\item[$\circ$] $v_3 \notin \kappa$: any bridge with root~$\{v_1,v_2\}$ and short leaf in~$\kappa$ suits for~$\bridge_1$.
				\item[$\circ$] $v_3 \in \kappa$:
					\begin{itemize}
					\item[$\star$] $v_3$ is isolated in~$\kappa \ssm \{v_2\}$: any bridge with root~$\{v_1,v_2\}$ and short leaf in~$\kappa$ suits.
					\item[$\star$] $v_3$ is not isolated in~$\kappa \ssm \{v_2\}$: we choose for~$\bridge_1$ a bridge with root~$\{v_1,v_2\}$ and whose short leaf contains~$v_3$.
					\end{itemize}
				\end{itemize}
			\end{enumerate}
		\end{enumerate}
	\item $\graphG{}[\hat v_1] \ssm \kappa$ has at least two edges:
		\begin{enumerate}[$\diamondsuit$]
		\item $\graphG{}[\hat v_1] \ssm \kappa$ has only one non-trivial connected component: we choose for~$\bridge_1$ a bridge with root~$\{v_1, v_2\}$, with short leaf containing the non-isolated child of~$v_1$ in~$\shortFlip$ which is not in~$\kappa$, and in which~$\{v_3\}$ is a child of the root only if it is either isolated in~$\graphG \ssm \{v_1,v_2\}$ or the short child of~$\bridge_1[v_1]$.
		\item $\graphG{}[\hat v_1] \ssm \kappa$ has at least two non-trivial connected components: we choose for~$\bridge_1$ a bridge with root~$\{v_1, v_2\}$, with short leaf in a connected component of $\graphG{}[\hat v_1] \ssm \kappa$ not containing the short leaf of~$\shortFlip$, and in which~$\{v_3\}$ is a child of the root only if it is either isolated in~${\graphG \ssm \{v_1,v_2\}}$ or the short child of~$\bridge_1[v_1]$. \qedhere
		\end{enumerate}
	\end{enumerate}
\end{enumerate}
\end{proof}

The choice of~$\bridge_n$ is similar to that of~$\bridge_1$, replacing~$v_1, v_2, v_3$ and~$\shortFlip$ by~$v_{n+1}, v_n, v_{n-1}$ and~$\shortFlip'$ respectively.
For choosing the other bridges~$\bridge_2, \dots, \bridge_{n-1}$, we first observe the existence of certain special vertices in~$\graphG$.

We say that a vertex distinct from~$v_1$ and~$v_{n+1}$ which disconnects at most one vertex is an \defn{almost leaf} of~$\graphG$. Observe that~$\graphG$ contains at least one almost leaf: Consider a spanning tree~$\graphG[T]$ of~$\graphG$. If~$\graphG[T]$ is a path from~$v_1$ to~$v_{n+1}$, the neighbor of~$v_1$ in~$\graphG[T]$ is an almost leaf of~$\graphG$. Otherwise, any leaf of~$\graphG[T]$ distinct from~$v_1$ and~$v_{n+1}$ is an almost leaf of~$\graphG$.

Choose an almost leaf~$v_i$ of~$\graphG$ which disconnects no vertex if possible, and any almost leaf otherwise. We sequentially construct the bridges~$\bridge_2, \dots, \bridge_{i-1}$: once $\bridge_j$ is constructed, we choose~$\bridge_{j+1}$ using Lemma~\ref{lem:horribleLemma} where we replace~$v_1, v_2, v_3$ and~$\shortFlip$ by~$v_{j+1}, v_{j+2}, v_{j+3}$ and~$\bridge_j[v_{j+1}]$. Similarly, we choose the bridges~$\bridge_{n-1}, \dots, \bridge_{i+1}$: once $\bridge_{j+1}$ is constructed, we choose~$\bridge_j$ using Lemma~\ref{lem:horribleLemma} where we replace~$v_1, v_2, v_3$ and~$\shortFlip$ by~$v_{j+1}, v_{j}, v_{j-1}$ and~$\bridge_{j+1}[v_{j+1}]$. Note that the conditions on~$\bridge_1$ required in Lemma~\ref{lem:horribleLemma} ensure that the hypothesizes in Lemma~\ref{lem:ordering} can be propagated.

It remains to properly choose the last bridge~$\bridge_i$. This is done by the following statement.

\begin{lemma}
\label{lem:horribleLemma2}
Let~$\shortFlip[g], \shortFlip[h]$ be two short flips on~$\graphG$ with distinct roots~$v, w$ respectively. Assume that
\begin{enumerate}[(i)]
\item $\graphG \ssm \{v,w\}$ has at least one edge;
\item $\shortFlip[g]$ and~$w$ are not in conflict, and $\shortFlip[h]$ and~$v$ are not in conflict;
\item $\{v\}$ is a child of~$w$ in~$\shortFlip[h]$ only if~$v$ is isolated in~$\graphG{}[\hat w]$;
\item $v$ disconnects at most one vertex of~$\graphG$ and this vertex is not~$w$.
\end{enumerate}
Then there exists a bridge~$\bridge$ with root~$\{v,w\}$ such that~$\shortFlip[g]$ and~$\bridge[v]$ are distinct if~$\flipGraph_{v}[\graphG]$ is not reduced to a single flip and have distinct short children if~$\flipGraph_{v}[\graphG]$ is not a square, and similarly for~$\shortFlip[h]$ and~$\bridge[w]$.
\end{lemma}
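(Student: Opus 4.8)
The plan is to follow, step by step, the scheme of the proof of Lemma~\ref{lem:horribleLemma}: I will carry out a long case analysis and, in each case, exhibit one explicit bridge~$\bridge$ with root~$\{v,w\}$, leaving to the reader the immediate (but tedious) verification that the proposed bridge indeed exists and fulfills the two requirements at once~--- the one on the $v$-side, concerning~$\shortFlip[g]$ and~$\bridge[v]$, and the one on the $w$-side, concerning~$\shortFlip[h]$ and~$\bridge[w]$.

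First I would record the relevant structural facts. By hypothesis~(i) the graph~$\graphG\ssm\{v,w\}$ contains an edge, hence bridges with root~$\{v,w\}$ exist; such a bridge is determined by choosing a connected component~$C$ of~$\graphG\ssm\{v,w\}$, an edge of~$C$ to be the short leaf, and a compatible spine on each component of~$\graphG\ssm\{v,w\}$. Now the $v$-side requirement only involves the short flip~$\bridge[v]$, in which~$v$ is the root and~$w$ its child: if~$\flipGraph_v(\graphG)$ is a single flip there is nothing to check, if it is a square we only need~$\bridge[v]\ne\shortFlip[g]$, and otherwise we need the short children of~$\bridge[v]$ and of~$\shortFlip[g]$ to be distinct~--- a mild constraint on the choice of~$C$ and on how~$C$ hangs below~$v$ or~$w$ in~$\bridge$. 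The $w$-side requirement is the mirror statement with~$\shortFlip[h]$. The point of hypothesis~(iv) is that, $v$ being an almost leaf, $\flipGraph_v(\graphG)$ and the short child of~$\shortFlip[g]$ are completely explicit, so I would dispose of the $v$-side first and then adjust the spine near~$w$.

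I would then branch according to: (a) whether~$\graphG\ssm\{v,w\}$ has exactly one edge or at least two; (b) in the first case whether that edge is the short leaf of~$\shortFlip[g]$ and/or of~$\shortFlip[h]$, and in the second case how the non-trivial connected components of~$\graphG\ssm\{v,w\}$ are placed with respect to the short leaves of~$\shortFlip[g]$ and~$\shortFlip[h]$; and (c) whether~$\flipGraph_v(\graphG)$ and~$\flipGraph_w(\graphG)$ are single flips, squares, or larger. In the generic situations there is ample room: one places the short leaf of~$\bridge$ inside a component, or behind a neighbor of~$v$ or~$w$, that avoids the short leaf and the short child of~$\shortFlip[g]$ on the $v$-side and of~$\shortFlip[h]$ on the $w$-side, exactly as in Lemma~\ref{lem:horribleLemma}; the two sides decouple because they concern different components or different neighbors. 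Hypotheses~(ii) and~(iii) enter precisely to exclude the degenerate configurations of \fref{fig:conflict}: non-conflict of~$\shortFlip[g]$ with~$w$ (respectively of~$\shortFlip[h]$ with~$v$) forbids the situations in which every bridge with root~$\{v,w\}$ would have the same short child as~$\shortFlip[g]$ (respectively~$\shortFlip[h]$), while~(iii) guarantees that~$v$ is not prematurely a child of~$w$ in~$\shortFlip[h]$, so that~$\bridge[w]$ can be taken with the correct shape near~$w$ whenever needed.

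The main obstacle will be the ``doubly tight'' case: $\graphG\ssm\{v,w\}$ reduces to a single edge~$e$, so that~$e$ is forced to be the short leaf of every admissible bridge, $e$ is simultaneously the short leaf of~$\shortFlip[g]$ and of~$\shortFlip[h]$, and~$\flipGraph_v(\graphG)$ and~$\flipGraph_w(\graphG)$ are both squares. Here almost nothing about~$\bridge$ is free, and one has to check that the residual choice (which of~$v,w$ carries~$e$, and the shape near the long root) still produces~$\bridge[v]\ne\shortFlip[g]$ inside the $v$-square and~$\bridge[w]\ne\shortFlip[h]$ inside the $w$-square. This is exactly where the combination of hypotheses~(ii)--(iv) is used: (iv) makes the $v$-square explicit, (ii) rules out the conflicting configurations on both sides, and~(iii) supplies the extra placement of~$w$. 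Once these boundary cases are cleared, the remainder is bookkeeping, and I would write the whole argument in the condensed ``$\spadesuit/\heartsuit/\diamondsuit/\clubsuit$'' style of Lemma~\ref{lem:horribleLemma}, deferring all the routine verifications to the reader.
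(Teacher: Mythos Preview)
Your plan is close to the paper's, but you miss the single observation that makes the whole argument short. The paper does \emph{not} run a two--sided case analysis; it first shows that the $v$-side is automatically satisfied by \emph{every} bridge with root~$\{v,w\}$, and then runs a case analysis only on the $w$-side. Concretely: hypothesis~(iv) forces the short child of~$\bridge[v]$ to be~$\{w\}$ for \emph{any} bridge~$\bridge$ with root~$\{v,w\}$ (since~$\graphG[\hat v]$ has a unique non-trivial component and it contains~$w$), while hypotheses~(iv) together with non-conflict of type~\eqref{cond:conflictA} between~$\shortFlip[g]$ and~$w$ force the short child of~$\shortFlip[g]$ to be different from~$\{w\}$. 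Hence~$\shortFlip[g]$ and~$\bridge[v]$ already have distinct short children, no matter which bridge you pick, and the $v$-side conclusion follows with no further work.

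Once you see this, the case analysis collapses to the $w$-side only, and your ``doubly tight'' case evaporates as a difficulty: there is nothing to check on the $v$-square. Your sentence ``the short child of~$\shortFlip[g]$ is completely explicit'' is the wrong way round --- it is the short child of~$\bridge[v]$ that is explicit (namely~$w$); and your later worry ``one has to check that the residual choice still produces~$\bridge[v]\ne\shortFlip[g]$'' is precisely what the observation above renders unnecessary. The remaining $w$-side analysis in the paper is short (essentially: one edge in~$\graphG\ssm\{v,w\}$ versus at least two, and whether that edge is the short leaf of~$\shortFlip[h]$), and hypotheses~(ii) and~(iii) are invoked exactly there, via the non-conflict clauses~\eqref{cond:conflictA}--\eqref{cond:conflictC}, to rule out the few degenerate subcases.
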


\begin{proof}
Condition~(iv) implies that $\{w\}$ is the short child of~$\bridge[v]$ for any bridge~$\bridge$ with root~$\{v,w\}$. In contrast, Condition~(iv) and Condition~\eqref{cond:conflictA} for~$\shortFlip[g]$ and~$w$ ensure that~$\{w\}$ is not the short child of~$\shortFlip[g]$. Therefore, the conclusion of the lemma holds for~$\shortFlip[g]$ and~$\bridge[v]$, for any bridge~$\bridge$ with root~$\{v,w\}$. The difficulty is to choose~$\bridge$ in order to satisfy the conclusion for~$\shortFlip[h]$ and~$\bridge[w]$. For this, we distinguish various cases, in a similar manner as in Lemma~\ref{lem:horribleLemma}. Again, we provide in each case a suitable choice for~$\bridge$, but the verification that this bridge exists and satisfies the conditions of the statement is immediate and left to the reader.
\begin{enumerate}[$\spadesuit$]
\item $\graphG \ssm \{v,w\}$ has exactly one edge~$e$: this edge~$e$ has to be the short leaf of any bridge with root~$\{v,w\}$, thus Condition~\eqref{cond:conflictA} for~$\shortFlip[h]$ and~$v$ ensures that~$e$ is isolated in~$\graphG{}[\hat w]$.
	\begin{enumerate}[$\heartsuit$]
	\item $e$ is the short leaf of~$\shortFlip[h]$: Condition~\eqref{cond:conflictB} for~$\shortFlip[h]$ and~$v$ ensures that~$\flipGraph_w(\graphG)$ is either a single flip or a square (because~$v$ disconnects at most one vertex from~$\graphG$).
		\begin{enumerate}[$\diamondsuit$]
		\item $\flipGraph_w(\graphG)$ is a single flip: $\bridge$ is obtained by contracting~$\{v,w\}$ in~$\shortFlip[h]$.
		\item $\flipGraph_w(\graphG)$ is a square: Condition~\eqref{cond:conflictC} for~$\shortFlip[h]$ and~$v$ ensures that~$v$ is not a child of~$w$ in~$\shortFlip[h]$ and $\bridge$ is obtained by contracting~$\{v, w\}$ in the short flip opposite to~$\shortFlip[h]$ in the square~$\flipGraph_{w}(\graphG)$.
		\end{enumerate}
	\item $e$ is not the short leaf of~$\shortFlip[h]$: we choose for~$\bridge$ any bridge with root~$\{v,w\}$ and short leaf~$e$.
	\end{enumerate}
\item $\graphG \ssm \{v,w\}$ has at least two edges:
	\begin{enumerate}[$\heartsuit$]
	\item the short leaf and the short child of~$\shortFlip[h]$ coincide: any bridge with root~$\{v,w\}$ and a short leaf distinct from that of~$\shortFlip[h]$ suits for~$\bridge$.
	\item the short leaf and the short child of~$\shortFlip[h]$ are distinct: we choose for~$\bridge$ a bridge with root~$\{v,w\}$ whose short leaf contains the short child of~$\shortFlip[h]$.\qedhere
	\end{enumerate}
\end{enumerate}
\end{proof}

We have now chosen the order on the vertices~$v_1, \dots, v_{n+1}$ and chosen for each~$i \in [n]$ a bridge~$\bridge_i$ connecting the fixed-root subgraphs~$\flipGraph_{v_i}(\graphG)$ and~$\flipGraph_{v_{i+1}}(\graphG)$. Our choice forces the short flips~$\bridge_{i-1}[v_i]$ and~$\bridge_i[v_i]$ (as well as the short flips~$\shortFlip$ and~$\bridge_1[v_1]$ and the short flips~$\bridge_n[v_{n+1}]$ and~$\shortFlip'$) to be distinct if~$\flipGraph_{v_i}[\graphG]$ is not reduced to a single flip and have distinct short children if~$\flipGraph_{v_i}[\graphG]$ is not a square. We then construct a Hamiltonian cycle~$\HamiltonianCycle_i$ in each fixed-root subgraph~$\flipGraph_{v_i}(\graphG)$ such that $\HamiltonianCycle_1$ contains the short flips~$\shortFlip$ and~$\bridge_1[v_1]$,  $\HamiltonianCycle_{n+1}$ contains the short flips~$\bridge_n[v_{n+1}]$ and~$\shortFlip'$, and $\HamiltonianCycle_i$ contains the short flips~$\bridge_{i-1}[v_i]$ and~$\bridge_i[v_i]$ for all~$2 \le i \le n$. Note that
\begin{itemize}
\item when~$\flipGraph_{v_i}(\graphG)$ is reduced to a single flip, we just set~${\HamiltonianCycle_i = \flipGraph_{v_i}(\graphG)}$ and consider it as a degenerate Hamiltonian cycle;
\item when~$\flipGraph_{v_i}(\graphG)$ is a square, it is already a cyle;
\item otherwise, we apply Theorem~\ref{theo:Hamiltonian2} by induction to~$\graphG{}[\hat v_i]$ and obtain the Hamiltonian cycle~$\HamiltonianCycle_i$. The theorem applies since the short flips~$\bridge_{i-1}[v_i]$ and~$\bridge_i[v_i]$ have distinct short children, so that the corresponding short flips in~$\flipGraph(\graphG{}[\hat v_i])$ have distinct short roots. 
\end{itemize}
Finally, we obtain a Hamiltonian cycle of~$\flipGraph(\graphG)$ containing~$\shortFlip$ and~$\shortFlip'$ by gluing the cycles~$\HamiltonianCycle_1, \dots, \HamiltonianCycle_{n+1}$ together using the bridges~$\bridge_1, \dots, \bridge_n$ as explained in Section~\ref{subsec:strategy}. This is possible since the short flips~$\bridge_{i-1}[v_i]$ and~$\bridge_i[v_i]$ both belong to the Hamiltonian cycle~$\HamiltonianCycle_i$, and are distinct when~$\flipGraph_{v_i}(\graphG)$ is not reduced to a single flip. This concludes the proof for all generic cases. The remaining of the paper deals with the special cases of stars and graphs with at most $6$ vertices.


\subsection{Stars}
\label{subsec:stars}

We now treat the particular case of stars. Consider a ground set~$\ground$ where a vertex~$\ast$ is distinguished. The \defn{star} on~$\ground$ is the tree~$\starG_\ground$ where all vertices of~$\ground \ssm \{\ast\}$ are leaves connected to~$\ast$. The flip graph~$\flipGraph(\starG_\ground)$ has two kinds of fixed-root subgraphs:
\begin{itemize}
 \item $\flipGraph_\ast(\starG_\ground)$ is reduced to a single spine~$\circledast$ with root~$\ast$ and $n$ leaves;
 \item for any other vertex~$v \in \ground \ssm \{\ast\}$, the fixed-root subgraph~$\flipGraph_v(\starG_\ground)$ is isomorphic to the flip graph~$\flipGraph(\starG_{\hat v})$ of the star~$\starG_{\hat v}$, where~$\ast$ is still the distinguished vertex in~$\hat v = \ground \ssm \{v\}$. For a spine~$\spine \in \flipGraph_v(\starG_\ground)$, we denote by~$\tail{\spine}$ the unique subspine of~$\spine$, and we write in column~$\spine = {\substack{ v \\[.02cm] {\tail{\spine}}}}$.
\end{itemize}

To find a Hamiltonian cycle passing through forced short flips and through the spine~$\circledast$ we need to refine again the induction hypothesis of Theorem~\ref{theo:Hamiltonian2} as follows.

\begin{proposition}
\label{prop:HamiltonianCycleStars}
Assume that~$|\ground| \ge 3$, and fix two short flips~$\shortFlip, \shortFlip'$ of~$\flipGraph(\starG_\ground)$ with distinct roots~${r \ne r'}$ and a long flip~$\longFlip$ of~$\flipGraph(\starG_\ground)$ with root~$\{r'',\ast\}$. Then the flip graph~$\flipGraph(\starG_\ground)$ has a Hamiltonian cycle containing~$\shortFlip, \shortFlip', \longFlip$.
\end{proposition}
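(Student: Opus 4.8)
The plan is to run an induction on $|\ground|$ parallel to the strategy of Section~\ref{subsec:strategy}, the new feature being that the central vertex $\ast$ yields a degenerate, one-vertex fixed-root subgraph. A handful of small cases are treated by hand: for $|\ground|=3$ the flip graph $\flipGraph(\starG_\ground)$ is the pentagon, a $5$-cycle, which contains all of its edges and hence any admissible triple $\shortFlip,\shortFlip',\longFlip$; the cases $|\ground|=4$ (and possibly a couple more) are similar direct verifications.

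For the inductive step I would first record the shape of the fixed-root subgraphs of $\flipGraph(\starG_\ground)$. One is the single spine $\flipGraph_\ast(\starG_\ground)=\{\circledast\}$; the others are $\flipGraph_\ell(\starG_\ground)\cong\flipGraph(\starG_{\ground\ssm\{\ell\}})$, one for each leaf $\ell$, again a star with centre $\ast$. The vertex $\circledast$ has exactly $n$ neighbours, the spines $\rho_\ell$ obtained by flipping the leaf $\ell$ in $\circledast$, and the edge $\{\circledast,\rho_\ell\}$ is precisely the long flip with root $\{\ell,\ast\}$; these are \emph{all} the long flips of $\flipGraph(\starG_\ground)$ whose root contains $\ast$ (a two-element label containing $\ast$ can only sit on the root of a spine, and then all other labels are singletons). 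Thus $\longFlip=\{\circledast,\rho_{r''}\}$. Moreover, under the isomorphism $\flipGraph_\ell(\starG_\ground)\cong\flipGraph(\starG_{\ground\ssm\{\ell\}})$ the vertex $\rho_\ell$ corresponds to the special spine of the smaller star, so the edges of $\flipGraph_\ell(\starG_\ground)$ at $\rho_\ell$ are the images of the long flips of $\starG_{\ground\ssm\{\ell\}}$ with root containing $\ast$. Finally, $\circledast$ is incident to no short flip, every short flip of $\starG_\ground$ has a short leaf of the form $\{s,\ast\}$ and a leaf as short root, and every bridge of $\starG_\ground$ has two leaves as its long root.

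Next I would emulate Section~\ref{subsec:strategy}: arrange the leaf subgraphs $\flipGraph_{\ell_1}(\starG_\ground),\dots,\flipGraph_{\ell_n}(\starG_\ground)$ together with $\{\circledast\}$ so that $\{\circledast\}$ is flanked by two of the leaf subgraphs, linked to them through the two long flips $\{\circledast,\rho_{\ell_1}\}=\longFlip$ and $\{\circledast,\rho_{\ell_n}\}$ (so one flanking leaf is $r''$), while consecutive leaf subgraphs are linked by genuine bridges. Build a Hamiltonian cycle $\HamiltonianCycle_{\ell_j}$ in each leaf subgraph by the induction hypothesis applied to $\starG_{\ground\ssm\{\ell_j\}}$. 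For a leaf subgraph not flanking $\{\circledast\}$ we only prescribe the two short flips coming from its two adjacent bridges; for one flanking $\{\circledast\}$ we prescribe the short flip of its one adjacent bridge \emph{and} a long flip incident to $\rho_{\ell_j}=\circledast$ -- the edge of $\HamiltonianCycle_{\ell_j}$ that we will cut in order to leave towards $\circledast$ -- and, if that leaf is also $r$ or $r'$, the forced short flip $\shortFlip$ or $\shortFlip'$; this worst case ``two short flips and one long flip'' is exactly the content of the statement being proved, which is why the strengthening by $\longFlip$ is needed for the recursion to close. Gluing is then done as in Section~\ref{subsec:proof}: the two long flips at $\circledast$ splice $\circledast$ in as a degenerate Hamiltonian cycle between the two flanking $\HamiltonianCycle_{\ell_j}$'s, cut at the prescribed long flips, while the genuine bridges merge in the remaining leaf subgraphs, producing a Hamiltonian cycle of $\flipGraph(\starG_\ground)$ through $\shortFlip,\shortFlip',\longFlip$.

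The substantive work, just as in Section~\ref{subsec:proof}, is the bookkeeping that makes the gluing close up into a single cycle while keeping the induction hypotheses usable. One must: produce a suitable arrangement of the leaves respecting the positions of $r,r',r''$ and their possible coincidences, and avoiding the conflicts of Lemmas~\ref{lem:horribleLemma} and~\ref{lem:horribleLemma2} (an easy analogue of Lemma~\ref{lem:ordering}, since any two leaf subgraphs of a star may be bridged and $\circledast$ may be put next to any two leaves, a forced short flip lying in a non-flanking leaf subgraph being absorbed into one of its bridges); choose the bridges so that the two prescribed short flips in each $\HamiltonianCycle_{\ell_j}$ have distinct short roots and sit compatibly with the prescribed long flip in the two subgraphs flanking $\circledast$; and handle separately the degenerate leaf subgraphs -- single flips, squares, and the smallest stars -- exactly as in the generic proof. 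This is a long but routine case analysis that I will not write out in full here.
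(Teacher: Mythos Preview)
Your high–level strategy (induction on $|\ground|$, isolate the degenerate fixed-root subgraph $\{\circledast\}$, use bridges between the leaf subgraphs) matches the paper's, and the observations about the structure of long flips incident to $\circledast$ are correct. However, the splicing step is underspecified in a way that hides a real obstruction. The two long flips $\{\circledast,\rho_{\ell_1}\}$ and $\{\circledast,\rho_{\ell_n}\}$ alone do not suffice to merge $\HamiltonianCycle_{\ell_1}$, $\{\circledast\}$, and $\HamiltonianCycle_{\ell_n}$ into a single cycle: cutting one edge in each of $\HamiltonianCycle_{\ell_1}$ and $\HamiltonianCycle_{\ell_n}$ and inserting the two long flips at $\circledast$ leaves two dangling endpoints $\sigma\in\flipGraph_{\ell_1}$ and $\tau\in\flipGraph_{\ell_n}$ that must still be joined. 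The paper fixes this by prescribing the cut edges to be specifically $\rho_{\ell_1}\leftrightarrow(\ell_1/\ell_n/\circledast)$ and $\rho_{\ell_n}\leftrightarrow(\ell_n/\ell_1/\circledast)$ and then using the \emph{third} long flip $(\ell_1/\ell_n/\circledast)\leftrightarrow(\ell_n/\ell_1/\circledast)$ to close up; this three-edge swap is the heart of the $\circledast$–gluing and you do not mention it.

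More seriously, your claim that a forced short flip in a non-flanking leaf subgraph can be ``absorbed into one of its bridges'' is wrong: in the star, a short flip $\shortFlip$ with root $r$ and short child $w$ is the short side of a \emph{unique} bridge, namely the one with long root $\{r,w\}$; if that bridge is used in the gluing then $\shortFlip$ is \emph{deleted} from the final cycle, contradicting the requirement. Consequently, when $r''\notin\{r,r'\}$ your circular arrangement $\circledast-\ell_1-\cdots-\ell_n-\circledast$ forces one of $r,r'$ to be non-flanking, and the corresponding leaf subgraph then needs \emph{three} distinct short flips (two bridge sides plus the imposed $\shortFlip$ or $\shortFlip'$), which exceeds the inductive budget of two shorts and one long. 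The paper resolves this by a genuine case split: when $r''\in\{r,r'\}$ it makes both $r$ and $r'$ the subgraphs flanking $\circledast$; when $r''\notin\{r,r'\}$ it takes $r$ and $r''$ as the flanking pair and attaches $\flipGraph_{r'}$ \emph{off to the side} via an extra parallel short-flip exchange (choosing an auxiliary short flip $\shortFlip[h]$ with root $r'$ inside $\HamiltonianCycle_{r''}$ or $\HamiltonianCycle_{v_{n-3}}$). This side-attachment is the missing idea in your sketch, and without it the induction does not close.
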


\begin{proof}
The proof works by induction on~$|\ground|$. If~$|\ground| = 3$, then~$\starG_\ground$ is a $3$-path and its flip graph is a pentagon. The case~$|\ground| = 4$ is solved by \fref{fig:star3} up to relabeling of~$\ground$. Namely, whatever triple~$\shortFlip, \shortFlip', \longFlip$ is imposed, there is a permutation of the leaves of~$\starG_{\{1,2,3,\ast\}}$ which sends the Hamiltonian cycle of \fref{fig:star3} to a Hamiltonian cycle passing through~$\shortFlip, \shortFlip', \longFlip$. Assume now that~$|\ground| \ge 5$. We distinguish two cases.

\begin{figure}[b]
  \capstart
  \centerline{\raisebox{1.4cm}{\includegraphics[width=.12\textwidth]{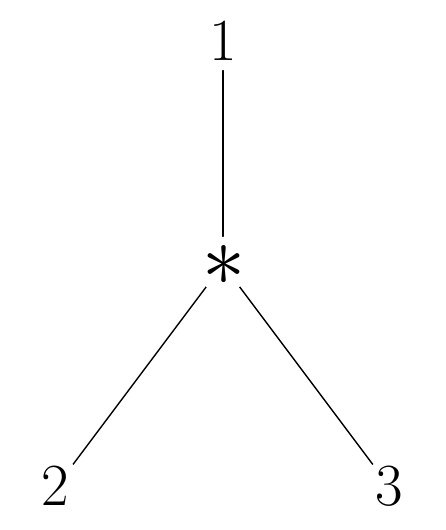}} \raisebox{.3cm}{\includegraphics[width=.5\textwidth]{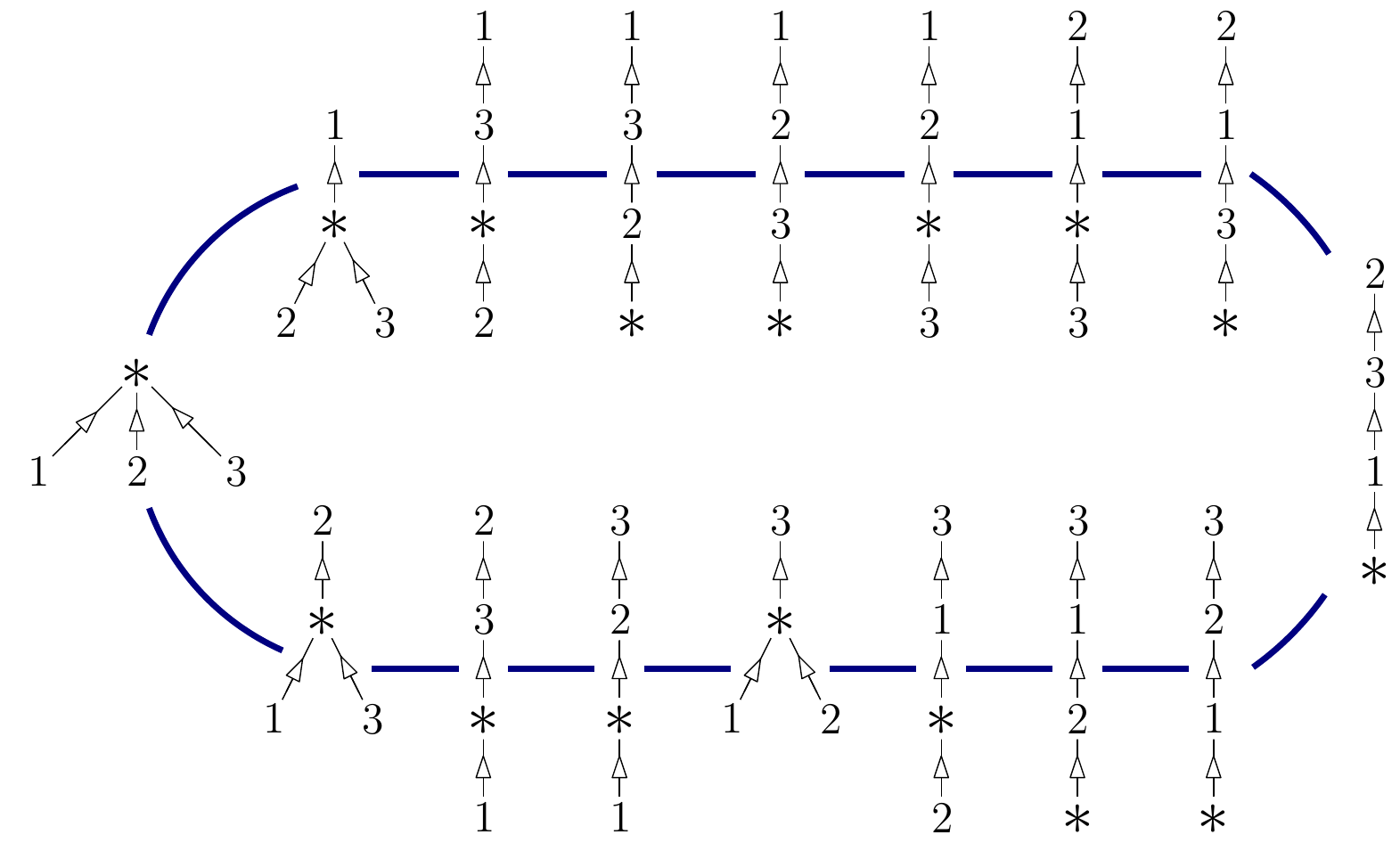}} \quad \includegraphics[width=.35\textwidth]{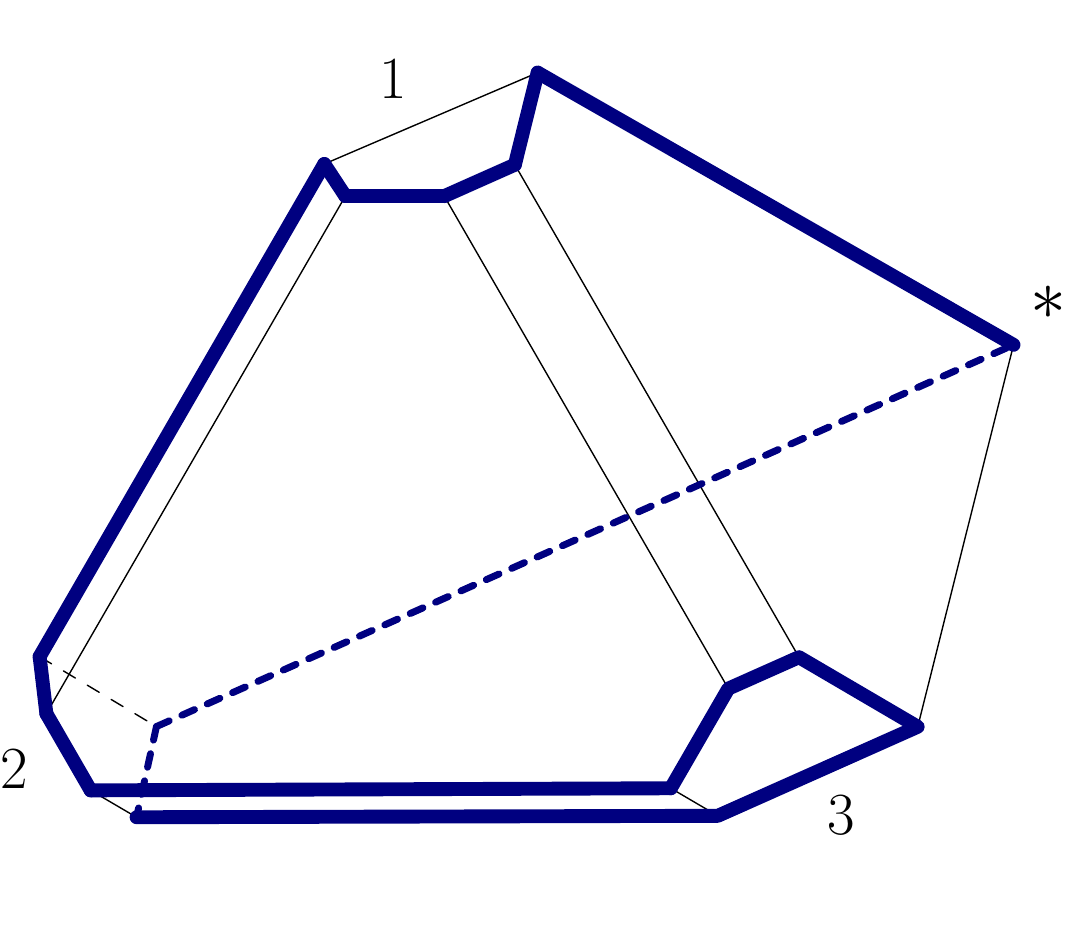}}
  \caption{A Hamiltonian cycle in the flip graph~$\flipGraph(\starG_{\{1, 2, 3, \ast\}})$. Up to permutations of the leaves~$\{1,2,3\}$, this cycle contains all possible triples~$\shortFlip, \shortFlip', \longFlip$ considered in Proposition~\ref{prop:HamiltonianCycleStars}.}
  \label{fig:star3}
\end{figure}

\para{Case 1:} $r'' \in \{r,r'\}$, say for instance~$r'' = r$. Let~$w'$ denote the child of~$r'$ in the short flip~$\shortFlip'$. Let~$v_1, \dots, v_{n-2}$ be an arbitrary ordering of~$\ground \ssm \{\ast, r, r'\}$ such that~$v_1 \ne w'$ (this is possible since~$|\ground| \ge 5$), and~$\bridge_1, \dots, \bridge_{n-2}$ any bridges such that the root of~$\bridge_i$ is~$\{v_{i-1}, v_i\}$ (where we set~$v_0 = r'$). We now choose inductively a Hamiltonian cycle~$\HamiltonianCycle_{\hat v}$ in each flip graph~$\flipGraph(\starG_{\hat v})$ for all~$v \in \ground \ssm \{\ast\}$ as follows.
\begin{enumerate}[(i)]
\item In~$\flipGraph(\starG_{\hat r})$, we choose a cycle~$\HamiltonianCycle_{\hat r}$ containing the short
        flip~$\tail{\shortFlip}$ and the long flip~$\circledast \flip \substack{r' \\ \!\! \circledast}$.
\item In~$\flipGraph(\starG_{\hat r'})$, we choose a cycle~$\HamiltonianCycle_{\hat r'}$ containing the short flips~$\tail{\shortFlip'}$ and~$\tail{\bridge}_1[r']$ and the long~flip~${\circledast \flip \substack{r \\ \circledast}}$.
\item In~$\flipGraph(\starG_{\hat v_i})$ for~$i \in [n-3]$, we choose a cycle~$\HamiltonianCycle_{\hat v_i}$ containing the short flips~$\tail{\bridge}_i[v_i]$ and~$\tail{\bridge}_{i+1}[v_i]$.
\item In~$\flipGraph(\starG_{\hat v_{n-2}})$, we choose a cycle~$\HamiltonianCycle_{\hat v_{n-2}}$ containing the short
        flip~$\tail{\bridge}_{n-2}[v_{n-2}]$.
\end{enumerate}

Note that these Hamiltonian cycles exist by induction hypothesis. Indeed, the short flips~$\tail{\bridge}_i[v_i]$ and~$\tail{\bridge}_{i+1}[v_i]$ have distinct roots~$v_{i-1}$ and~$v_{i+1}$. The only delicate case is thus Point~(ii): the short flips~$\tail{\shortFlip'}$ and~$\tail{\bridge}_1[r']$ have distinct roots since we forced~$v_1$ to be different from~$w'$. Each Hamiltonian cycle~$\HamiltonianCycle_{\hat v}$ on~$\flipGraph(\starG_{\hat v})$ induces a Hamiltonian cycle~$\HamiltonianCycle_v$ on~$\flipGraph_v(\starG_\ground)$ (just add~$v$ at the root in all spines). From these Hamiltonian cycles, we construct a Hamiltonian cycle for~$\flipGraph(\starG_\ground)$ as illustrated in~\fref{fig:herediteStar1}. We join~$\HamiltonianCycle_r$ with~$\HamiltonianCycle_{r'}$ by deleting the flips~${\substack{r \\ \circledast} \flip \substack{\!\! r \\ r' \\ \!\! \circledast}}$ and~${\substack{r' \\ \!\! \circledast} \flip \substack{r' \\ \!\! r \\ \!\! \circledast}}$ while inserting the long flips~$\substack{r \\ \circledast} \flip \circledast \flip \substack{r' \\ \!\! \circledast}$ and~$\substack{\!\! r \\ r' \\ \!\! \circledast} \flip \substack{r' \\ \!\! r \\ \!\! \circledast}$. Finally, we use the bridges~$\bridge_1, \dots, \bridge_{n-2}$ to connect the resulting cycle to the cycles~$\HamiltonianCycle_{v_1}, \dots, \HamiltonianCycle_{v_{n-2}}$ by exchanging their short flips with their long flips.

\begin{figure}[h]
  \capstart
  \centerline{\includegraphics[width=.9\textwidth]{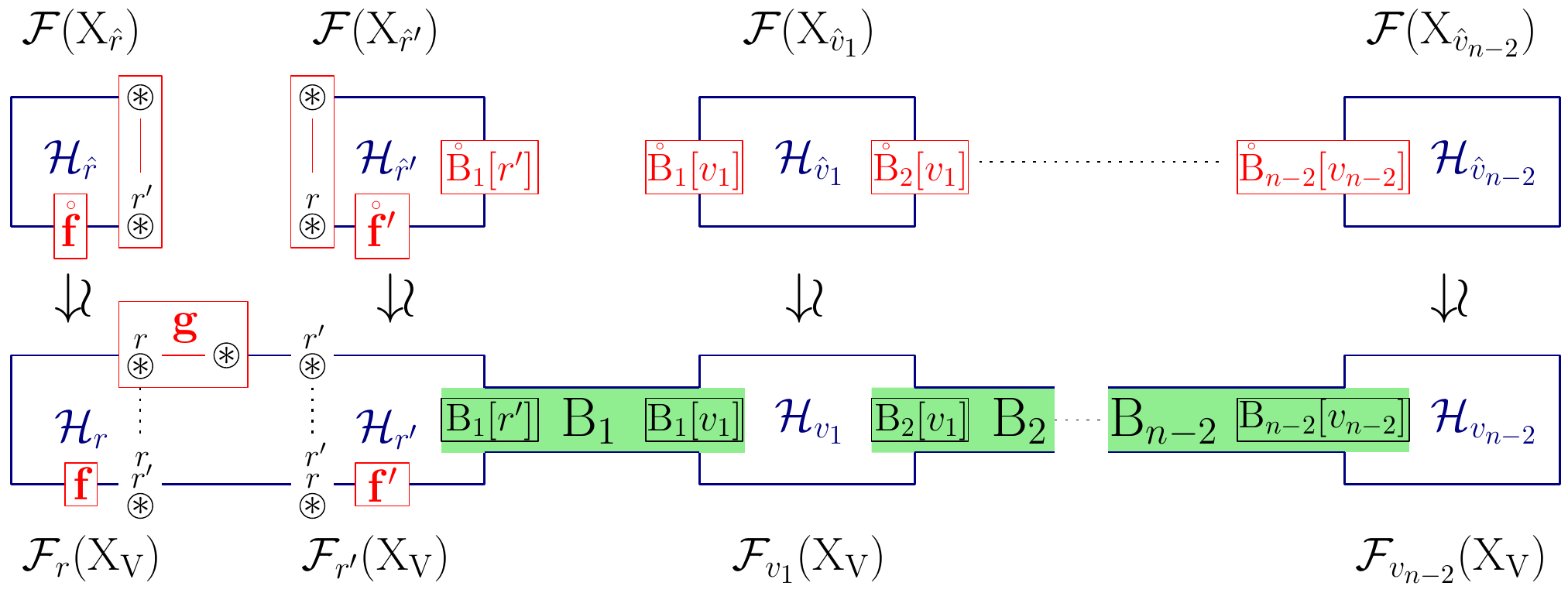}}
  \caption{Construction of a Hamiltonian cycle in~$\flipGraph(\starG_\ground)$ when~$r'' = r$.}
  \label{fig:herediteStar1}
\end{figure}

\para{Case 2:} $r'' \notin \{r,r'\}$. Let~$v_1, \dots, v_{n-3}$ be an arbitrary ordering of~${\ground \ssm \{\ast, r, r', r''\}}$, and~$\bridge_1, \dots, \bridge_{n-3}$ any bridges such that the root of~$\bridge_i$ is~$\{v_{i-1}, v_i\}$ (where we set~$v_0 = r''$). We now choose inductively a Hamiltonian cycle~$\HamiltonianCycle_{\hat v}$ in each flip graph~$\flipGraph(\starG_{\hat v})$ for all~$v \in \ground \ssm \{\ast, r'\}$ as follows.
\begin{enumerate}[(i)]
\item In~$\flipGraph(\starG_{\hat r})$, we choose a cycle~$\HamiltonianCycle_{\hat r}$ containing the short flip~$\tail{\shortFlip}$ and the long flip~$\circledast \flip \substack{r'' \\ \!\! \circledast}$.
\item In~$\flipGraph(\starG_{\hat r''})$, we choose a cycle~$\HamiltonianCycle_{\hat r''}$ containing a short flip~$\tail{\mathbf{h}}$ with root~$r'$, the short flip~$\tail{\bridge}_1[r'']$ and the long~flip~${\circledast \flip \substack{r \\ \circledast}}$.
\item In~$\flipGraph(\starG_{\hat v_i})$ for~$i \in [n-4]$, we choose a cycle~$\HamiltonianCycle_{\hat v_i}$ containing the short flips~$\tail{\bridge}_i[v_i]$ and~$\tail{\bridge}_{i+1}[v_i]$.
\item In~$\flipGraph(\starG_{\hat v_{n-3}})$, we choose a cycle~$\HamiltonianCycle_{\hat v_{n-3}}$ containing the short flip~$\tail{\bridge}_{n-3}[v_{n-3}]$ and a short flip~$\tail{\mathbf{k}}$ with root~$r'$.
\end{enumerate}
\begin{figure}[h]
  \capstart
  \centerline{\includegraphics[width=.9\textwidth]{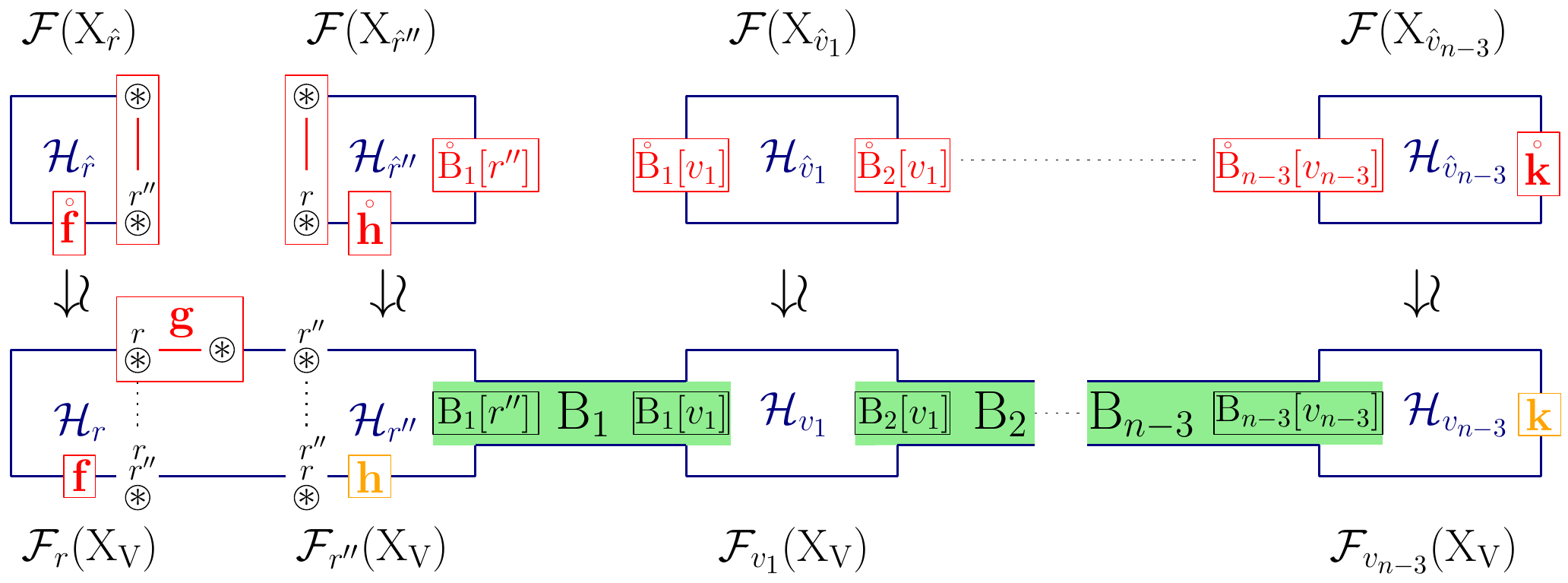}}
  \caption{Construction of a Hamiltonian cycle in~$\flipGraph(\starG_\ground)$ when~$r'' \notin \{r, r'\}$.}
  \label{fig:herediteStar2}
\end{figure}
Each Hamiltonian cycle~$\HamiltonianCycle_{\hat v}$ on~$\flipGraph(\starG_{\hat v})$ induces a Hamiltonian cycle~$\HamiltonianCycle_v$ on~$\flipGraph_v(\starG_\ground)$ (just add~$v$ at the root in all spines). From these Hamiltonian cycles, we construct the cycle illustrated in \fref{fig:herediteStar2}. We still have to enlarge this cycle to cover~$\flipGraph_{r'}(\starG_\ground)$. Let~$\mathbf{h}'$ and~$\mathbf{k}'$ denote the short flips in~$\flipGraph_{r'}(\starG_\ground)$ parallel to the short flips~$\mathbf{h}$ and~$\mathbf{k}$ respectively. Since~$r'' \ne v_{n-3}$, the root~$w'$ of~$\tail{\shortFlip'}$ cannot coincide with both. Assume for example that~$w' \ne r''$. By induction, we can then find a Hamiltonian cycle~$\HamiltonianCycle_{\hat r'}$ of~$\flipGraph(\starG_{\hat r'})$ containing both~$\tail{\shortFlip'}$ and~$\tail{\mathbf{h}'}$. This cycle induces a Hamiltonian cycle~$\HamiltonianCycle_{r'}$ of~$\flipGraph_{r'}(\starG_\ground)$ passing through~$\shortFlip'$ and~$\mathbf{h}'$. We can then connect this cycle to the cycle of \fref{fig:herediteStar2} by exchanging the parallel short flips~$\mathbf{h}$ and~$\mathbf{h}'$ by the corresponding parallel long flips. In the situation when~$w' = r''$, we have~$w' \ne v_{n-3}$ and we argue similarly by attaching~$\flipGraph_{r'}(\starG_\ground)$ to~$\mathbf{k}$ instead of~$\mathbf{h}$.
\end{proof}


\subsection{Graph with at most~$6$ vertices}
\label{subsec:6vertices}

Again we will focus on connected graphs because of Corollary~\ref{coro:disconnected}. The analysis for graphs with at most~$3$ vertices is immediate. We now treat separately the graphs with~$4,5$ and~$6$ vertices, which are not stars (stars have been treated in the previous section).

\subsubsection{Graphs with $4$ vertices}

We consider all possible connected graphs on $4$ vertices and exhibit explicit Hamiltonian cycles of their flip graphs. To do so, we could draw a cycle of spines as in \fref{fig:star3}\,(middle). Instead, we rather draw the Hamiltonian cycle on the flip graph~$\flipGraph(\graphG)$ represented as the $1$-skeleton of the graph associahedron~$\Asso(\graphG)$ as in \fref{fig:star3}\,(right). Let us remind from~\cite{CarrDevadoss} that the graph associahedron~$\Asso(\graphG)$ is obtained from the standard simplex~$\triangle_{\ground} \eqdef \conv \set{e_v}{v \in \ground}$ (where~$(e_v)_{v \in \ground}$ denotes the canonical basis of~$\R^\ground$) by successive truncations of the faces ${\triangle_{\ground \ssm \tube} = \conv\set{e_v}{v \in \ground \ssm \tube}}$ for the tubes~$\tube$ of~$\graphG$, in decreasing order of dimension. Each tube~$\tube$ of~$\graphG$ corresponds to a facet~$\face_{\tube}$ of~$\Asso(\graphG)$, and each maximal tubing~$\tubing$ corresponds to the vertex of~$\Asso(\graphG)$ which belongs to all facets~$\face_{\tube}$ for~$\tube \in \tubing$. In~\fref{fig:vertexSpineCorrespondance}\,(right), we label the positions of the vertices of~$\triangle_{\ground}$ before the truncations. The fixed-root subgraphs appear as the $1$-skeleta of the four shaded faces of~$\graphG$, and the bridges are the five thin parallelograms (the short flips correspond to their short sides, and the long flips correspond to their long sides).

\begin{figure}[h]
  \capstart
  \centerline{\includegraphics[width=.75\textwidth]{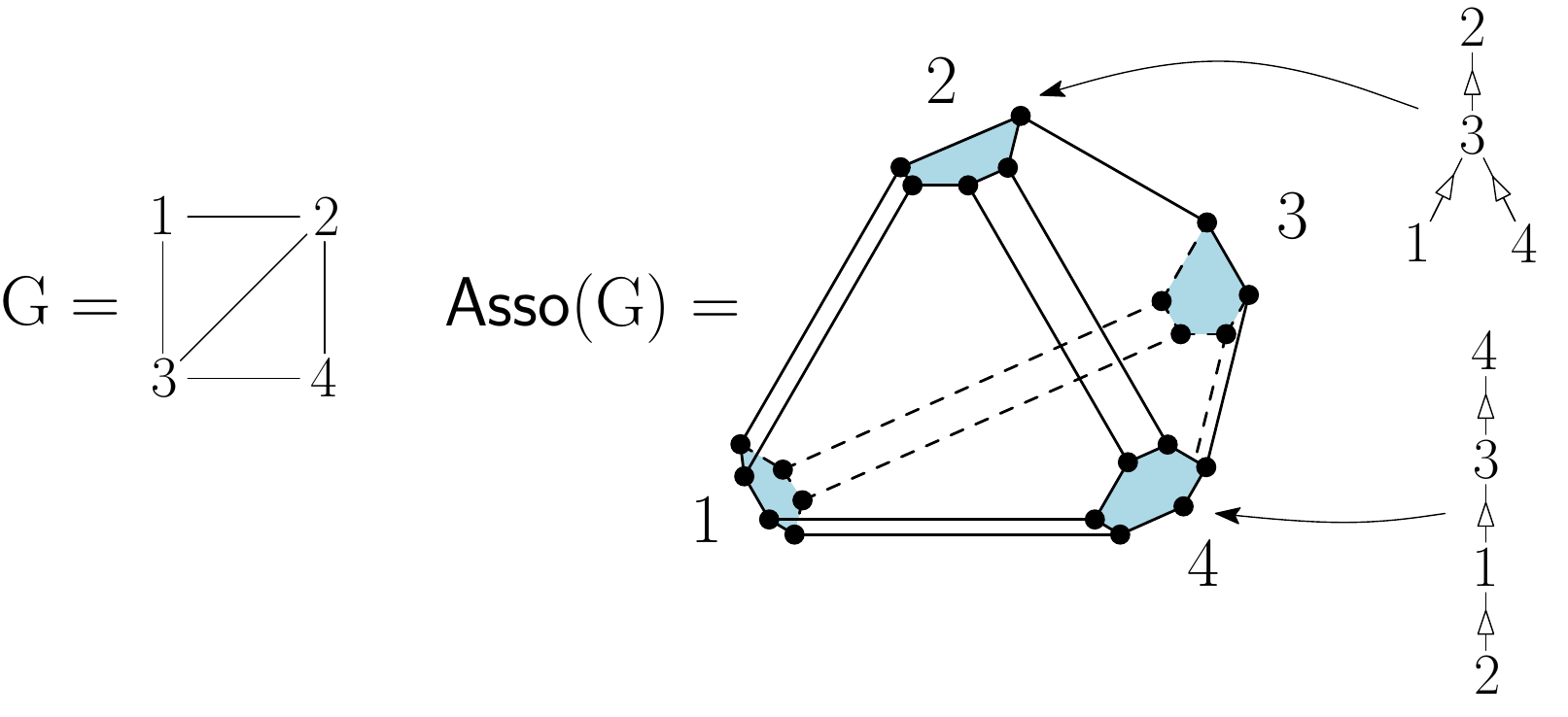}}
  \vspace{-.5cm}
  \caption{Correspondence between vertices of~$\Asso(\graphG)$ and spines on~$\graphG$.}
  \label{fig:vertexSpineCorrespondance}
\end{figure}

Using these conventions, \fref{fig:fourVertices} represents Hamiltonian cycles for the flip graphs on all connected graphs on $4$ vertices (the $4$-star was already treated in~\fref{fig:star3}). The Hamiltonian cycles, together with their orbits under the action of the isomorphism group of the corresponding graph, prove the following statements, which imply Theorem~\ref{theo:Hamiltonian2} for all graphs on $4$ vertices.

\begin{proposition}
\label{prop:4vertices}
\begin{enumerate}[(a)]
\item For any graph~$\graphG$ on at most~$4$ vertices, any pair of short flips (even with the same root) is contained in a Hamiltonian cycle of~$\flipGraph(\graphG)$. \label{cond:bonusa}
\item For the stars on~$3$ and~$4$ vertices, each triple consisting of two short flips (even with the same root) and one long flip as in Proposition~\ref{prop:HamiltonianCycleStars} is contained in a Hamiltonian cycle of~$\flipGraph(\graphG)$. \label{cond:bonusb}
\item For the classical~$3$-dimensional (path) associahedron, there exists a Hamiltonian cycle containing simultaneously all short flips. \label{cond:bonusc}
\item For all connected graphs on~$4$ vertices, there exist a Hamiltonian cycle of~$\flipGraph(\graphG)$ containing at least one short flip in each fixed-root subgraph. We can even preserve this property if we impose the Hamiltonian cycle to pass through one distinguished short flip. \label{cond:bonusd}
\end{enumerate}
\end{proposition}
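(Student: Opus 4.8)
The plan is to strip away everything that is genuinely small and then run a finite verification on the graphs with four vertices. By Corollary~\ref{coro:disconnected} it suffices to treat connected graphs, and graphs on at most three vertices are disposed of at once: their flip graphs are a point, a single edge, or a pentagon, so every requested pair (or triple) of flips trivially lies on a Hamiltonian cycle. Up to isomorphism the connected graphs on four vertices are the path~$\pathG_4$, the cycle~$\cycleG_4$, the complete graph~$\completeG_4$, the star~$\completeG_{1,3}$, the \emph{paw} (a triangle with a pendant edge), and the \emph{diamond} ($\completeG_4$ minus an edge). The star is already handled in \fref{fig:star3}; for each of the five remaining graphs I would draw~$\flipGraph(\graphG)$ as the $1$-skeleton of~$\Asso(\graphG)$ using the Carr--Devadoss truncation description~\cite{CarrDevadoss} together with the vertex labelling of \fref{fig:vertexSpineCorrespondance}, and exhibit one explicit Hamiltonian cycle per graph, as collected in \fref{fig:fourVertices}.

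For part~(\ref{cond:bonusa}) the idea is to enlarge each exhibited cycle into its orbit under the automorphism group of~$\graphG$, which acts on~$\flipGraph(\graphG)$ by relabelling the ground set, and to check that every pair of short flips of~$\flipGraph(\graphG)$ — whether or not the two short roots coincide — appears on some cycle of the resulting list. Since each of these flip graphs has only a handful of short flips, this is a finite inspection. Parts~(\ref{cond:bonusb}), (\ref{cond:bonusc}) and~(\ref{cond:bonusd}) are then obtained by choosing the cycles of \fref{fig:fourVertices} and \fref{fig:star3} a little more carefully. For~(\ref{cond:bonusb}), the flip graph of the $3$-star~$\completeG_{1,2}=\pathG_3$ is itself a pentagon, hence already a Hamiltonian cycle through any two of its edges together with a third, while for the $4$-star the cycle of \fref{fig:star3} and its $\fS_3$-orbit permuting the leaves should realize every triple formed by two short flips (now possibly sharing a root) and one long flip of the type in Proposition~\ref{prop:HamiltonianCycleStars}.

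For~(\ref{cond:bonusc}) I would single out the Hamiltonian cycle of~$\flipGraph(\pathG_4)$ that runs through \emph{all} short flips of the $3$-dimensional associahedron simultaneously; there are only a few of these and such a cycle is found by direct search. For~(\ref{cond:bonusd}) I would first observe that removing any vertex from a connected non-star graph on four vertices leaves a $3$-vertex graph with at least one edge, so each of the four fixed-root subgraphs of~$\flipGraph(\graphG)$ contains at least one short flip; then I would pick, among the cycles already in hand, one meeting all four fixed-root subgraphs in short flips, and use its automorphism orbit to additionally route it through any prescribed distinguished short flip.

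The main obstacle is purely bookkeeping: ensuring that the automorphism orbit of the single cycle chosen for each graph really does cover \emph{every} pair of short flips. This is unproblematic when the automorphism group is large — for~$\completeG_4$ it is~$\fS_4$, acting as the full symmetry group of the $3$-permutahedron — but for the low-symmetry graphs, namely the path and the paw (automorphism groups of order~$2$) and the diamond (order~$4$), a single Hamiltonian cycle and its tiny orbit may miss some pair, so one must either exhibit a second Hamiltonian cycle or choose the first one so that its orbit still sweeps out all short flips. Keeping track of the exact list of short flips and fixed-root subgraphs of each of the five graphs is the bulk of the work; once the figures are fixed these checks are routine and finite, and I would carry out one representative case in detail and leave the remaining verifications to the reader.
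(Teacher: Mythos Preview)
Your approach is essentially identical to the paper's: exhibit explicit Hamiltonian cycles on the $1$-skeleta of the $3$-dimensional graph associahedra (as in \fref{fig:fourVertices} and \fref{fig:star3}) and then sweep them through the orbits of the automorphism groups of the underlying graphs to cover all required pairs or triples of flips. The only slip is that you omitted~$\completeG_3$ among the connected $3$-vertex graphs (its flip graph is a hexagon, not a pentagon), but this case is equally trivial and your anticipation that several cycles may be needed for the low-symmetry $4$-vertex graphs matches exactly what the paper's figure provides.
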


\begin{figure}
  \capstart
  \centerline{\includegraphics[width=\textwidth]{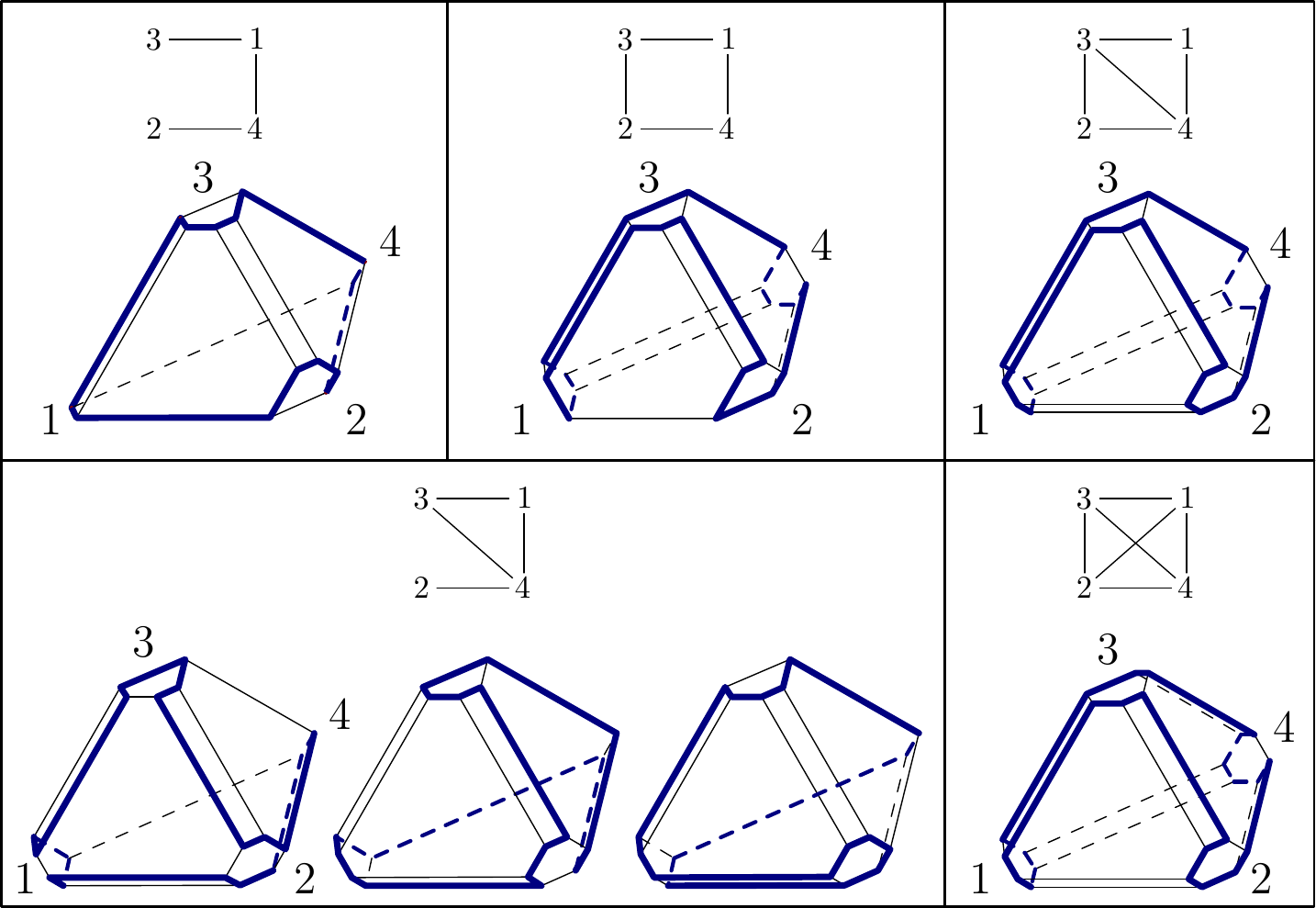}}
  \caption{Hamiltonian cycles showing Proposition~\ref{prop:4vertices}. Each vertex of the graph associahedra corresponds to a spine as explained in \fref{fig:vertexSpineCorrespondance}.}
  \label{fig:fourVertices}
\end{figure}

\subsubsection{Graphs with 5 vertices}

Graphs on $5$ vertices are treated by a case analysis. As in the proof of Lemma~\ref{lem:ordering}, we will denote by~$D$ the set of totally disconnecting pairs of~$\graphG$, \ie pairs~$\{x, y\}$ of vertices of~$\graphG$ such that~$\graphG \ssm \{x,y\}$ has no edge. Recall from the proof of Lemma~\ref{lem:ordering} that~$D$ has at most two elements and that they are not disjoint.

Consider now a graph~$\graphG$ on~$5$ vertices. According to Remark~\ref{rem:ordering}, the proof of Section~\ref{subsec:proof} applies in various configurations. We treat here the remaining cases. As we observed in Proposition~\ref{prop:4vertices}\,\eqref{cond:bonusa} that for any connected graph~$\graphG$ on at most $4$ vertices, any pair of short flips (even with the same root) is contained in a Hamiltonian cycle of~$\flipGraph(\graphG)$, we can ignore Condition~\eqref{cond:conflictA} in the definition of conflict. We therefore say that a vertex~$w$ and a short flip~$\shortFlip[g]$ with root~$v$ are \defn{in conflict} if~$\graphG \ssm \{v,w\}$ has a single edge which is the short leaf of~$\shortFlip[g]$, and~$w$ is a child of~$v$. With this definition, there is only one bridge connecting~$\flipGraph_v(\graphG)$ and~$\flipGraph_w(\graphG)$, but we cannot use it if we want the short flip~$\shortFlip[g]$ to belong to the Hamiltonian cycle. One can check that the conclusions of Lemmas~\ref{lem:horribleLemma} and~\ref{lem:horribleLemma2} still hold in this situation.

We first suppose that~$D=\{\{x,y\}\}$ is a singleton and that either~$x$ or~$y$ is in conflict with both~$\shortFlip$ and~$\shortFlip'$. Checking all connected graphs on five vertices, we see that this situation can only happen for the following graphs:
\begin{center}
$\graphG_1 = \;$\raisebox{-1cm}{\includegraphics[scale=1]{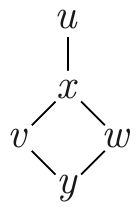}} \qquad $\graphG_2 = \;$\raisebox{-1cm}{\includegraphics[scale=1]{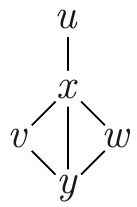}} \qquad $\graphG_3 = \;$\raisebox{-1cm}{\includegraphics[scale=1]{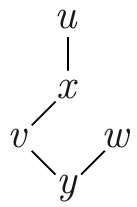}} \qquad $\graphG_4 = \;$\raisebox{-1cm}{\includegraphics[scale=1]{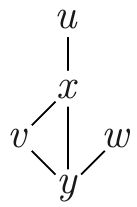}} \;\;.
\end{center}
For each one, we explain how to prove Theorem~\ref{theo:Hamiltonian2}.
\begin{description}
\item[$\graphG=\graphG_1$] The only possible conflicts are between~$x$ and a short flip with root~$v$ or~$w$. Thus, up to isomorphism of the graph, the only instance of Theorem~\ref{theo:Hamiltonian2} fitting to the configuration we are looking at is given by
\[
\shortFlip = \raisebox{-1.05cm}{\includegraphics[scale=1]{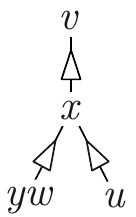}} \qquad \text{and} \qquad \shortFlip' = \raisebox{-1.05cm}{\includegraphics[scale=1]{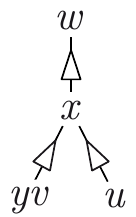}}.
\]
Observe that there exists bridges~$\bridge_v,\bridge_u,\bridge_y$ with respective roots~$\{w,v\},\{w,u\},\{w,y\}$ and a bridge~$\bridge$ with root~$\{u,x\}$. Notice that the fixed root subgraph~$\flipGraph_w(\graphG_1)$ is isomorphic to the classical (path) associahedron so that Proposition~\ref{prop:4vertices}\,\eqref{cond:bonusc} ensures that there exists a Hamiltonian cycle~$\HamiltonianCycle_w$  of the flip graph~$\flipGraph_w(\graphG_1)$ containing all the short flips~$\shortFlip,\bridge_v[w],\bridge_u[w],\bridge_y[w]$. Moreover Proposition~\ref{prop:4vertices}\,\eqref{cond:bonusa} ensures that there exists a Hamiltonian cycle~$\HamiltonianCycle_y$ (resp.~$\HamiltonianCycle_x$) of the flip graph~$\flipGraph_y(\graphG_1)$ (resp.~$\flipGraph_x(\graphG_1)$~) containing the short flip~$\bridge_y[y]$ (resp.~$\bridge{}[x]$). Proposition~\ref{prop:4vertices}\,\eqref{cond:bonusa} again gives us a Hamiltonian cycle~$\HamiltonianCycle_u$ (resp.~$\HamiltonianCycle_v$) of the flip graph~$\flipGraph_u(\graphG_1)$ (resp.~$\flipGraph_v(\graphG_1)$) containing the two short flips~$\bridge_u[u]$ and~$\bridge{}[u]$ (resp.~$\shortFlip'$ and~$\bridge_v[v]$). Note that the short flips of the bridges are all distinct since~$u,v$ and~$w$ do no disconnect the graph. Gluing all the Hamiltonian cycles of the fixed root subgraphs along the bridges as explained in Section~\ref{subsec:strategy} gives a Hamiltonian cycle of~$\flipGraph(\graphG_1)$ containing~$\shortFlip$ and~$\shortFlip'$.

\item[$\graphG=\graphG_2$] The only possible conflicts are between~$x$ and a short flip with root~$v$ or~$w$. Thus, up to isomorphism of the graph, the only instance of Theorem~\ref{theo:Hamiltonian2} fitting to the configuration we are looking at is given by
\[
\shortFlip = \raisebox{-1.05cm}{\includegraphics[scale=1]{f1}} \qquad \text{and} \qquad \shortFlip' = \raisebox{-1.05cm}{\includegraphics[scale=1]{fPrime1}}.
\]
Observe that there exists a bridge~$\bridge$ with root~$\{u,x\}$. Notice that the fixed root subgraph~$\flipGraph_w(\graphG_1)$ is isomorphic to the graph associahedron of a connected graph on~$4$ vertices so that Proposition~\ref{prop:4vertices}\,\eqref{cond:bonusd} ensures that there exists a Hamiltonian cycle~$\HamiltonianCycle_w$ of the flip graph~$\flipGraph_w(\graphG_2)$ containing the short flip~$\shortFlip$ and three short flips~$\bridge_v[w],\bridge_u[w],\bridge_y[w]$ of some bridges~$\bridge_v,\bridge_u,\bridge_y$ whose respective roots are~$\{w,v\},\{w,u\},\{w,y\}$. Moreover Proposition~\ref{prop:4vertices}\,\eqref{cond:bonusa} ensures that there exists a Hamiltonian cycle~$\HamiltonianCycle_y$ (resp.~$\HamiltonianCycle_x$) of the flip graph~$\flipGraph_y(\graphG_2)$ (resp.~$\flipGraph_x(\graphG_2)$) containing the short flip~$\bridge_y[y]$ (resp.~$\bridge{}[x]$). Proposition~\ref{prop:4vertices}\,\eqref{cond:bonusa} again gives us a Hamiltonian cycle~$\HamiltonianCycle_u$ (resp.~$\HamiltonianCycle_v$) of the flip graph~$\flipGraph_u(\graphG_2)$ (resp.~$\flipGraph_v(\graphG_2)$) containing the two short flips~$\bridge_u[u]$ and~$\bridge{}[u]$ (resp.~$\shortFlip'$ and~$\bridge_v[v]$). Note that the short flips of the bridges are all distinct since~$u,v$ and~$w$ do no disconnect the graph. Gluing all the Hamiltonian cycles of the fixed root subgraphs along the bridges as explained in Section~\ref{subsec:strategy} gives a Hamiltonian cycle of~$\flipGraph(\graphG_2)$ containing~$\shortFlip$ and~$\shortFlip'$.

\item[$\graphG=\graphG_3$] The analysis is identical to the case~$\graphG=\graphG_1$.

\item[$\graphG=\graphG_4$] The analysis is identical to the case~$\graphG=\graphG_2$.
\end{description}

We now suppose that~$\graphG$ has~$5$ vertices and that~$D=\{\{x,y\},\{x,z\}\}$. Since all edges either contain~$x$ or both~$y$ and~$z$, $\graphG$ is one of the following graphs:
\medskip
\begin{center}
$\graphG_5 = \;$\raisebox{-.6cm}{\includegraphics[scale=1]{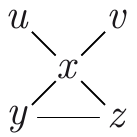}} \qquad $\graphG_6 = \;$\raisebox{-.6cm}{\includegraphics[scale=1]{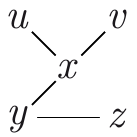}} \;\;.
\end{center}
\medskip
We note that in both of them, the only possible conflicts are between~$x$ and short flips with root either~$u$ or~$v$. Indeed, $\{x,u\}$ and~$\{x,v\}$ are the only pairs of vertices disjoint from exactly one edge, and the fixed-root subgraphs~$\flipGraph_x(\graphG_5)$ and~$\flipGraph_x(\graphG_6)$ are reduced to single flips. Using Remark~\ref{rem:ordering}, we can restrict to the cases in which~$x \notin \{v_1,v_5\}$. Again we treat separately the two graphs:

\begin{description}
\item[$\graphG=\graphG_5$] Notice that the fixed-root subgraphs~$\flipGraph_y(\graphG_5)$ and~$\flipGraph_z(\graphG_5)$ both are isomorphic to the flip graph of a star on~$4$ vertices with central vertex~$x$. So given a short flip~$\shortFlip[h]$ (resp.~$\shortFlip[k]$) with roots~$y$ (resp.~$z$), Proposition~\ref{prop:HamiltonianCycleStars} provides us with a Hamiltonian cycle~$\HamiltonianCycle_y$ (resp.~$\HamiltonianCycle_z$) of~$\flipGraph_y(\graphG_5)$ (resp.~$\flipGraph_z(\graphG_5)$) containing~$\shortFlip[h]$ (resp.~$\shortFlip[k]$) and the flip of~$\flipGraph_y(\graphG_5)$ (resp.~$\flipGraph_z(\graphG_5)$) corresponding to the long flip of~$\flipGraph(\graphG_5[\hat y])$ (resp.~$\flipGraph(\graphG_5[\hat y])$) with root~$\{x,z\}$ (resp.~$\{x,y\}$). Then gluing together the cycles~$\HamiltonianCycle_y$ and~$\HamiltonianCycle_z$ and the fixed-root subgraph~$\flipGraph_x(\graphG_5)$ as in~\fref{fig:yxzGluing} gives a tool to deal with the remaining configurations, always with the strategy of gluing Hamiltonian cycles of the fixed-root subgraphs along bridges.

\begin{figure}[h]
  \capstart
  \centerline{\includegraphics[width=.75\textwidth]{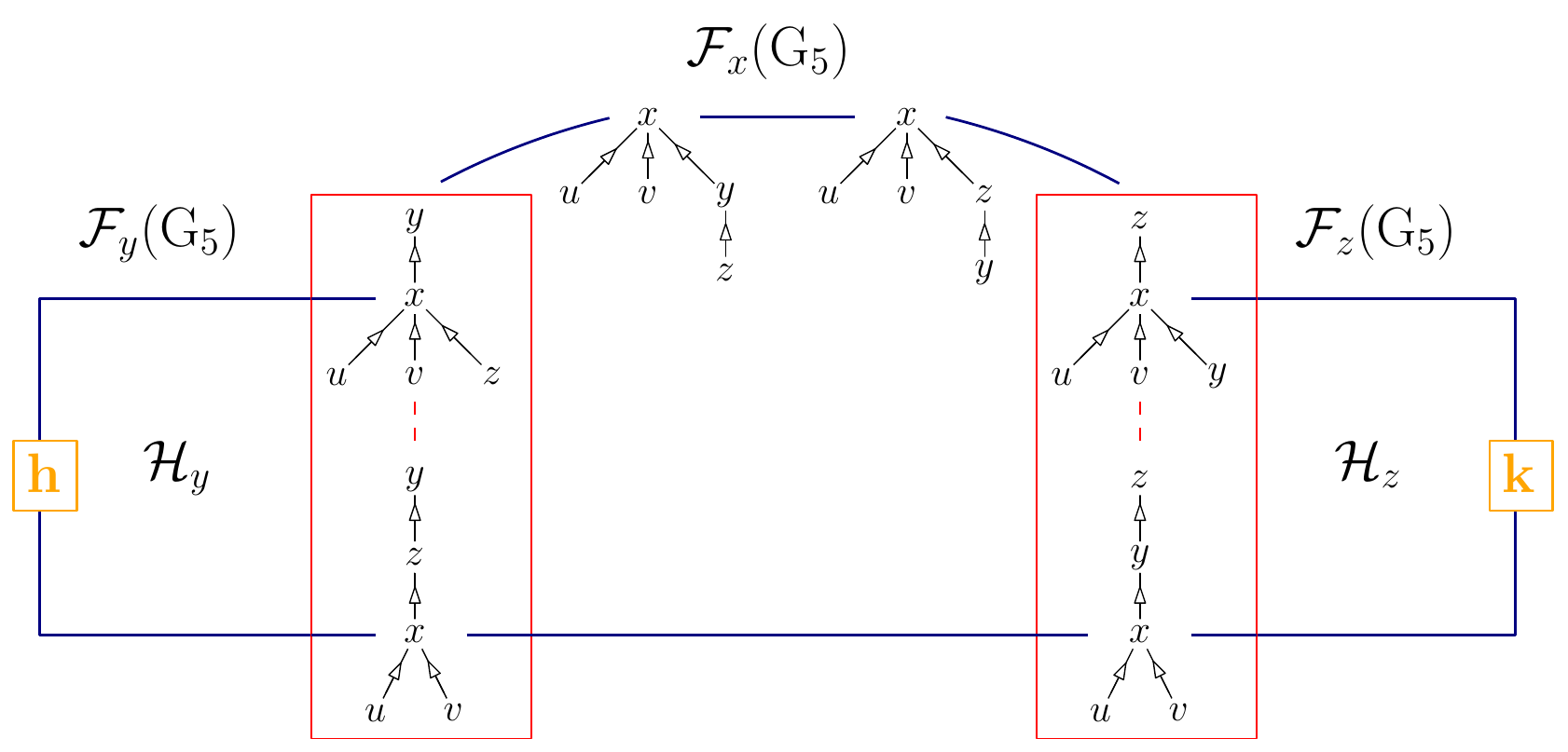}}
  \caption{How to glue together the flip graphs~$\flipGraph_y(\graphG_5),\flipGraph_x(\graphG_5)$ and~$\flipGraph_z(\graphG_5)$.}
  \label{fig:yxzGluing}
\end{figure}

\item[$\graphG=\graphG_6$] Observe that both fixed-root subgraphs~$\flipGraph_u(\graphG_6)$ and~$\flipGraph_v(\graphG_6)$ are isomorphic to the classical (path) associahedron. Thus as soon as one of the short flips~$\shortFlip$ and~$\shortFlip'$ is not in conflict with~$x$, one can find an arrangement of the vertices in the same way as when we treated the graph~$\graphG_2$ and~$\graphG_4$ (without the intermediary of the vertex~$u$) which always makes our strategy work. We thus only need to deal with the case where~$x$ is in conflict with both~$\shortFlip$ and~$\shortFlip'$, which corresponds to a single instance of Theorem~\ref{theo:Hamiltonian2}, checked by hand in \fref{fig:almostStarHamiltonian}.

\begin{figure}[h]
  \capstart
  \centerline{\includegraphics[width=\textwidth]{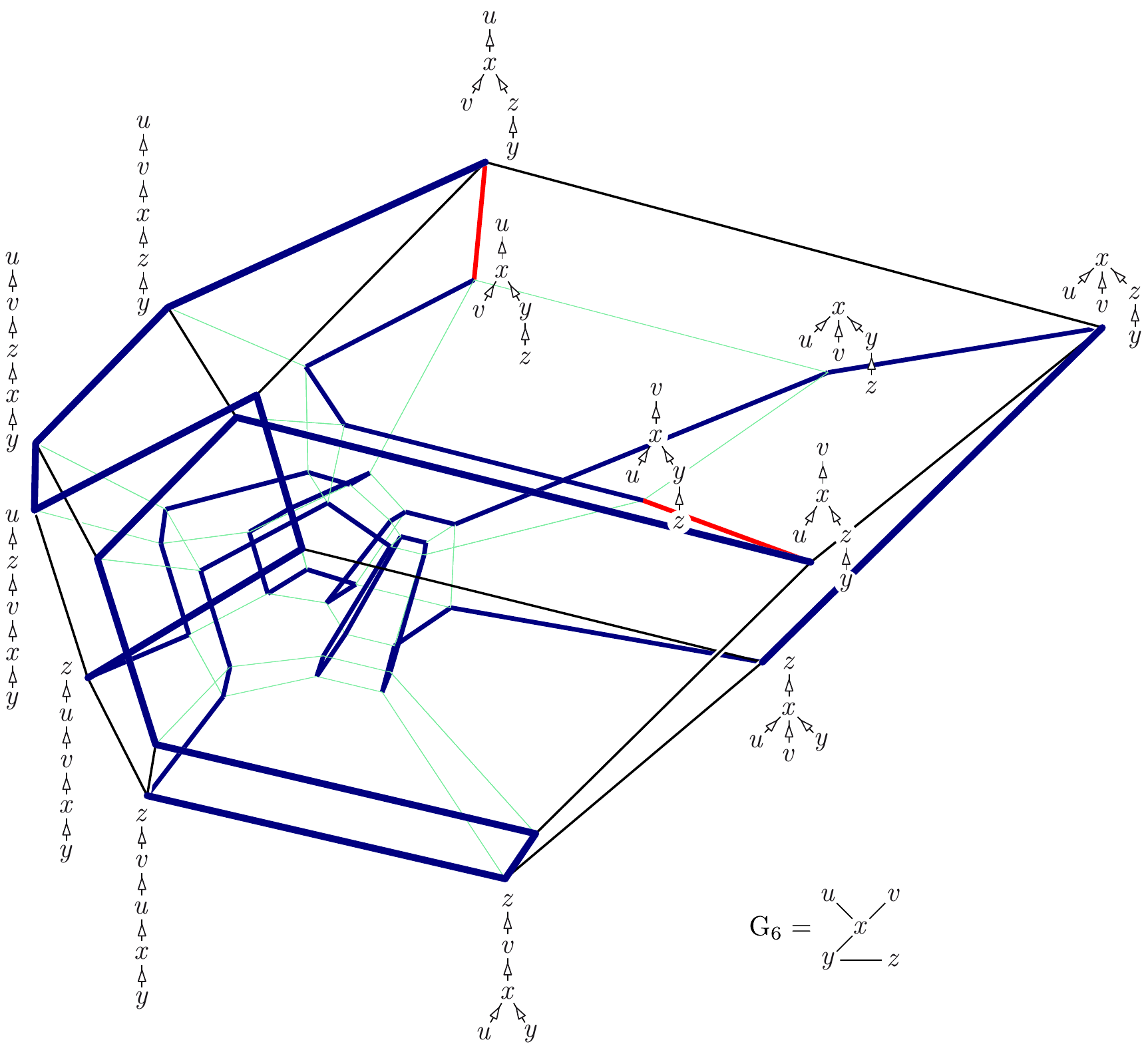}}
  \caption{The flip graph~$\flipGraph(\graphG_6)$ represented as the $1$-skeleton of the graph associahedron~$\Asso(\graphG_6)$, visualized by its Schlegel diagram. The (blue) Hamiltonian cycle passes through the only two short flips in conflict with~$x$~(in~red).}
  \label{fig:almostStarHamiltonian}
\end{figure}
\end{description}

\subsubsection{Graphs with $6$ vertices}

To finish, we need to deal with the case where~$\graphG$ has~$6$ vertices, ${D=\{\{x,y\},\{x,z\}\}}$ and~$x$ is in conflict with both~$\shortFlip$ and~$\shortFlip'$. Again~$\graphG$ can only be one of the two following graphs:
\begin{center}
$\graphG_7 = \;$\raisebox{-.6cm}{\includegraphics[scale=1]{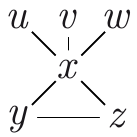}} \qquad $\graphG_8 = \;$\raisebox{-.6cm}{\includegraphics[scale=1]{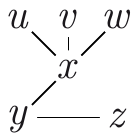}} \;\;.
\end{center}
\medskip
The graph~$\graphG_7$ is treated exactly as~$G_5$, using Remark~\ref{rem:ordering} instead of Proposition~\ref{prop:4vertices} to restrict the number of cases to analyze. In the case of $\graphG_8$, there is again a single difficult instance which can be treated by hand (since the graph associahedron~$\Asso(\graphG_8)$ has~$236$ vertices, we do not include here the resulting picture).

\section*{Acknowledgments}

We thank F.~Santos for pointing out that the diameter of the graphical zonotope is a lower bound for that of the graph associahedron, which led to the lower bound in Theorem~\ref{theo:boundsDiameter}.


\bibliographystyle{alpha}
\bibliography{graphPropertiesGraphAssociahedra}

\begin{thebibliography}{GKZ08}

\bibitem[BFS90]{BilleraFillimanSturmfels}
Louis~J. Billera, Paul Filliman, and Bernd Sturmfels.
\newblock Constructions and complexity of secondary polytopes.
\newblock {\em Adv.~Math.}, 83(2):155--179, 1990.

\bibitem[CD06]{CarrDevadoss}
Michael~P. Carr and Satyan~L. Devadoss.
\newblock Coxeter complexes and graph-associahedra.
\newblock {\em Topology Appl.}, 153(12):2155--2168, 2006.

\bibitem[CFZ02]{ChapotonFominZelevinsky}
Fr{\'e}d{\'e}ric Chapoton, Sergey Fomin, and Andrei Zelevinsky.
\newblock Polytopal realizations of generalized associahedra.
\newblock {\em Canad. Math. Bull.}, 45(4):537--566, 2002.

\bibitem[CP16]{CeballosPilaud-diameter}
C.~Ceballos and V.~Pilaud.
\newblock The diameter of type {D} associahedra and the non-leaving-face
  property.
\newblock {\em European~J.~Combin.}, 51:109--124, 2016.

\bibitem[CSZ15]{CeballosSantosZiegler}
Cesar Ceballos, Francisco Santos, and G\"unter~M. Ziegler.
\newblock Many non-equivalent realizations of the associahedron.
\newblock {\em Combinatorica}, 2015.
\newblock DOI:10.1007/s00493-014-2959-9.

\bibitem[Deh10]{Dehornoy}
Patrick Dehornoy.
\newblock On the rotation distance between binary trees.
\newblock {\em Adv. Math.}, 223(4):1316--1355, 2010.

\bibitem[Dev09]{Devadoss}
Satyan~L. Devadoss.
\newblock A realization of graph associahedra.
\newblock {\em Discrete Math.}, 309(1):271--276, 2009.

\bibitem[FS05]{FeichtnerSturmfels}
Eva~Maria Feichtner and Bernd Sturmfels.
\newblock Matroid polytopes, nested sets and {B}ergman fans.
\newblock {\em Port. Math. (N.S.)}, 62(4):437--468, 2005.

\bibitem[GKZ08]{GelfandKapranovZelevinsky}
Israel Gelfand, Mikhail M.~M. Kapranov, and Andrei Zelevinsky.
\newblock {\em Discriminants, resultants and multidimensional determinants}.
\newblock Modern Birkh\"auser Classics. Birkh\"auser Boston Inc., Boston, MA,
  2008.
\newblock Reprint of the 1994 edition.

\bibitem[HL07]{HohlwegLange}
Christophe Hohlweg and Carsten Lange.
\newblock Realizations of the associahedron and cyclohedron.
\newblock {\em Discrete Comput.~Geom.}, 37(4):517--543, 2007.

\bibitem[HLT11]{HohlwegLangeThomas}
Christophe Hohlweg, Carsten Lange, and Hugh Thomas.
\newblock Permutahedra and generalized associahedra.
\newblock {\em Adv. Math.}, 226(1):608--640, 2011.

\bibitem[HN99]{HurtadoNoy}
Ferran Hurtado and Marc Noy.
\newblock Graph of triangulations of a convex polygon and tree of
  triangulations.
\newblock {\em Comput. Geom.}, 13(3):179--188, 1999.

\bibitem[Joh63]{Johnson}
Selmer~M. Johnson.
\newblock Generation of permutations by adjacent transposition.
\newblock {\em Math. Comp.}, 17:282--285, 1963.

\bibitem[Lee89]{Lee}
Carl~W. Lee.
\newblock The associahedron and triangulations of the {$n$}-gon.
\newblock {\em European J.~Combin.}, 10(6):551--560, 1989.

\bibitem[Lod04]{Loday}
Jean-Louis Loday.
\newblock Realization of the {S}tasheff polytope.
\newblock {\em Arch.~Math.~(Basel)}, 83(3):267--278, 2004.

\bibitem[LR98]{LodayRonco}
Jean-Louis Loday and Mar{\'{\i}}a~O. Ronco.
\newblock Hopf algebra of the planar binary trees.
\newblock {\em Adv. Math.}, 139(2):293--309, 1998.

\bibitem[Luc87]{Lucas}
Joan~M. Lucas.
\newblock The rotation graph of binary trees is {H}amiltonian.
\newblock {\em J. Algorithms}, 8(4):503--535, 1987.

\bibitem[Pos09]{Postnikov}
Alexander Postnikov.
\newblock Permutohedra, associahedra, and beyond.
\newblock {\em Int. Math. Res. Not. IMRN}, (6):1026--1106, 2009.

\bibitem[Pou14]{Pournin}
Lionel Pournin.
\newblock The diameter of associahedra.
\newblock {\em Adv. Math.}, 259:13--42, 2014.

\bibitem[PS12]{PilaudSantos-brickPolytope}
Vincent Pilaud and Francisco Santos.
\newblock The brick polytope of a sorting network.
\newblock {\em European~J.~Combin.}, 33(4):632--662, 2012.

\bibitem[PS15]{PilaudStump-brickPolytope}
V.~Pilaud and C.~Stump.
\newblock Brick polytopes of spherical subword complexes and generalized
  associahedra.
\newblock {\em Adv.~Math.}, 276:1--61, 2015.

\bibitem[Sta63]{Stasheff}
Jim Stasheff.
\newblock Homotopy associativity of {H}-spaces {I}, {II}.
\newblock {\em Trans. Amer. Math. Soc.}, 108(2):293--312, 1963.

\bibitem[Ste64]{Steinhaus}
Hugo Steinhaus.
\newblock {\em One hundred problems in elementary mathematics}.
\newblock Basic Books Inc. Publishers, New York, 1964.

\bibitem[STT88]{SleatorTarjanThurston}
Daniel~D. Sleator, Robert~E. Tarjan, and William~P. Thurston.
\newblock Rotation distance, triangulations, and hyperbolic geometry.
\newblock {\em J. Amer. Math. Soc.}, 1(3):647--681, 1988.

\bibitem[Tro62]{Trotter}
H.~F. Trotter.
\newblock Algorithm 115: Perm.
\newblock {\em Commun. ACM}, 5(8):434--435, 1962.

\bibitem[Zel06]{Zelevinsky}
Andrei Zelevinsky.
\newblock Nested complexes and their polyhedral realizations.
\newblock {\em Pure Appl. Math. Q.}, 2(3):655--671, 2006.

\bibitem[Zie95]{Ziegler}
G{\"u}nter~M. Ziegler.
\newblock {\em Lectures on polytopes}, volume 152 of {\em Graduate Texts in
  Mathematics}.
\newblock Springer-Verlag, New York, 1995.

\end{thebibliography}
\label{sec:biblio}

\end{document}